\documentclass[JournalSubmission]{CustomTemplate}
\RequirePackage{amsthm,amsmath,amsfonts,amssymb}
\RequirePackage[numbers,sort]{natbib}
\usepackage{bbm}
\usepackage{bm}
\usepackage{stmaryrd}
\usepackage{mathtools}
\numberwithin{equation}{section}
\usepackage{mathrsfs}

\usepackage{pgfplots}
\pgfplotsset{compat=1.18}
\usepackage[hidelinks]{hyperref}
\usepackage{breakurl}
\usepackage[stretch=10]{microtype}
\usepackage{float}
\RequirePackage{graphicx}
\usepackage{graphicx}
\usepackage{url}
\usepackage{scalerel}
\usepackage{enumitem}
\usepackage{subcaption}
\usepackage{todonotes}
\usepackage{cleveref}
\crefformat{equation}{(#2#1#3)}
\usepackage[normalem]{ulem}

\startlocaldefs
\theoremstyle{plain}
\newtheorem{theorem}{Theorem}[section]
\newtheorem{corollary}[theorem]{Corollary}
\newtheorem{lemma}[theorem]{Lemma}
\newtheorem{proposition}[theorem]{Proposition}

\theoremstyle{remark}
\newtheorem{remark}[theorem]{Remark}
\newtheorem*{remark*}{Remark}

\newcommand{\eg}{{e.g.,}\ }
\newcommand{\ie}{{i.e.,}\ }
\newcommand{\nref}[2]{\hyperref[#1]{#2}} 

\newcommand{\negsp}{\hspace{-0.25em}}

\newcommand*{\doii}[1]{\url{http://doi.org/#1}}

\newcommand{\euler}{{\rm{e}}}
\renewcommand{\i}{{\rm{i}}}
\renewcommand{\Im}{\operatorname{Im}}
\renewcommand{\Re}{\operatorname{Re}}
\newcommand{\de}{\vcentcolon=}
\newcommand{\ed}{=\vcentcolon}
\renewcommand{\inf}{\mathop{\mathrm{inf}\vphantom{\mathrm{sup}}}}

\newcommand{\intd}{{\rm d}}
\newcommand{\dd}[1]{\frac{\rm{d}}{{\rm{d}}#1}}

\newcommand{\TV}{{\rm{TV}}}

\newcommand{\tmix}{t_{\rm{mix}}}
\newcommand{\Unif}[1]{\operatorname{Unif}#1}
\newcommand{\Cov}{\operatorname{Cov}}
\newcommand{\Var}{\operatorname{Var}}

\newcommand{\Linf}{L^\infty}

\newcommand{\sort}{\operatorname{sort}}

\newcommand{\Csa}{\mathbb{C}_{\rm{sa}}}
\newcommand{\tr}{\operatorname{tr}}
\newcommand{\Tr}{\operatorname{Tr}}
\newcommand{\transpose}{{\rm T}}

\newcommand{\op}{}

\newcommand{\defbold} [1]{\expandafter\newcommand\csname b#1\endcsname{\mathbf{#1}}}
\newcommand{\defcal} [1]{\expandafter\newcommand\csname c#1\endcsname{\mathcal{#1}}}
\newcommand{\defbb} [1]{\expandafter\newcommand\csname bb#1\endcsname{\mathbb{#1}}}
\newcommand{\defscr} [1]{\expandafter\newcommand\csname s#1\endcsname{\mathscr{#1}}}
\newcommand{\deffrak} [1]{\expandafter\newcommand\csname f#1\endcsname{\mathfrak{#1}}}
\usepackage{forloop}
\newcounter{ct}
\forLoop{1}{26}{ct}{
    \edef\letter{\Alph{ct}}
    \expandafter\defbold\letter
    \expandafter\defcal\letter
    \expandafter\defbb\letter
    \expandafter\defscr\letter
    \expandafter\deffrak\letter
}
\def\b1{\mathbf{1}}
\def\bb1{\mathbbm{1}}


\newcommand{\sym}{\operatorname{sym}}
\newcommand{\free}{\operatorname{free}}

\newcommand{\brackets}[1]{{\text{\fontsize{5}{5}\selectfont{$($}{\hspace*{-0.3pt}}\fontsize{6}{4}\selectfont$#1$\hspace*{-0.3pt}\fontsize{5}{5}\selectfont{$)$}}}}
\newcommand{\defLetterci} [1]{\expandafter\newcommand\csname #1ci\endcsname{#1^{\brackets{\rho}}}}
\newcommand{\defbLetterci} [1]{\expandafter\newcommand\csname b#1ci\endcsname{\mathbf{#1}^{\brackets{\rho}}}}
\newcommand{\defbbLetterci} [1]{\expandafter\newcommand\csname bb#1ci\endcsname{\mathbb{#1}^{\brackets{\rho}}}}

\usepackage{forloop}
\newcounter{ct2}
\forLoop{1}{26}{ct2}{
    \edef\letter{\Alph{ct2}}
      \expandafter\defLetterci\letter
      \expandafter\defbLetterci\letter
      \expandafter\defbbLetterci\letter
}
\newcommand{\Delci}{\Delta^{\hspace*{-1.5pt}\brackets{\rho}}}
\newcommand{\Delc}[1]{\Delta^{\brackets{#1}}}

\newcommand{\Wc}[1]{W^{\brackets{#1}}}
\newcommand{\bbQc}[1]{\bbQ^{\brackets{#1}}}

\endlocaldefs

\begin{document}

\begin{frontmatter}
	\title{Sharp concentration for sums of matrices with Markovian dependence through universality}

	\begin{aug}
		\author[A,B]{\fnms{Alexander}~\snm{Van Werde}\ead[label=e1]{a.van.werde@uni-muenster.de} \orcid{0000-0002-4670-1836}},
		\author[A]{\fnms{Jaron}~\snm{Sanders}\ead[label=e2]{jaron.sanders@tue.nl} \orcid{0000-0003-0187-2065}}
		\address[A]{Eindhoven University of Technology, Department of Mathematics and Computer Science\vspace{-1em}}
        \address[B]{University of M\"unster, Institut f\"ur Mathematische Stochastik\\ 
        \printead{e1,e2}}
	\end{aug}

	\begin{abstract}
		We prove that a sum of random matrices generated by a $\psi$-mixing Markov chain has similar spectral properties to a Gaussian matrix with the same mean and covariance structure.
		This nonasymptotic universality principle enables sharp concentration inequalities when combined with recent advances in the Gaussian literature.
		We illustrate the theory with examples, showing how it enables polynomial dimensional improvements relative to previous Markovian matrix concentration results when applied to Wigner-type matrices, and how one can recover sharp limiting values for a model used to study spectral clustering techniques.

		A key challenge in the proof is that techniques based only on classical cumulants, which can be used when summands are independent, are not sufficient on their own for efficient estimates in a Markovian setting.
		Our approach exploits Boolean cumulants and a change--of--measure argument.

	\end{abstract}

	\begin{keyword}[class=MSC]
		\kwd{60B20, 60J05, 46L53}
	\end{keyword}

	\begin{keyword}
		\kwd{Matrix concentration}
		\kwd{Markov chain}
		\kwd{free probability}
		\kwd{Boolean cumulant}
	\end{keyword}

\end{frontmatter}
\section{Introduction}\label{sec: Introduction}
The study of random matrices is a rich topic in probability theory with numerous deep results and connections to other fields.
Such connections are aided by the phenomenon of \emph{universality} which states that the asymptotic properties of random matrices are remarkably robust to the entries' distribution.
For instance, a classical result is that if $\bW$ is a \emph{Wigner matrix}---a symmetric $d\times d$ matrix with independent and identically distributed entries of mean zero and unit variance---then the operator norm satisfies $\Vert \bW \Vert/\sqrt{d} \to 2$ as $d\to \infty$ given only mild moment assumptions, no matter the specific law of the entries \cite{bai1988necessary,furedi1981eigenvalues}.

Classical random matrix theory is however mostly limited to asymptotic results for homogeneous matrices, such as those with identically distributed entries.
For settings that do not admit an asymptotic formulation or matrices with inhomogeneous structure, the more recent theory of \emph{matrix concentration
	inequalities} has achieved notable success \cite{tropp2015introduction}.
For example, suppose that we are given self-adjoint $d\times d$ matrices $\bX_1,\ldots,\bX_n$ that are bounded $\Vert \bX_i \Vert \leq R$ and centered $\bbE[\bX_i] = 0$.
If these matrices are independent, then the matrix Bernstein inequality of Tropp and Oliveira \cite{tropp2015introduction,oliveira2009concentration} yields a constant $c>0$ such that
\begin{equation}
	\bbE\Biggl[\biggl\Vert \sum_{i=1}^n \bX_i \biggr\Vert \biggr] \leq c \sqrt{\ln(d) \biggl\Vert \sum_{i=1}^n \bbE[\bX_i^2] \biggr\Vert} + c \ln(d) R.\label{eq:YoungRag}
\end{equation}
Being applicable to any matrix that can be decomposed into independent bounded summands, this result is quite flexible.
It covers classical models as a special case, but also permits settings where entries are neither independent nor identically distributed.

Moreover, results with dependent summands have also been developed, further enhancing the theory's flexibility.
For example,  Neeman, Shi, and Ward \cite{neeman2023concentration} recently achieved a variant of \eqref{eq:YoungRag} with Markovian dependence.
Assume that there exists a stationary Markov chain $Z_1,\ldots,Z_n$ with values in some state space $\cZ$ as well as matrix valued functions $\bF_i:\cZ\to \bbC^{d\times d}$ such that the self-adjoint matrices can be expressed as $\bX_i = \bF_i(Z_i)$ for every $i\geq 1$.
Then, the Markovian matrix Bernstein inequality in \cite{neeman2023concentration} implies that there exists an absolute constant $c>0$ with
\begin{equation}
	\bbE\biggl[\biggl\Vert \sum_{i=1}^n \bX_i \biggr\Vert \biggr] \leq c  \sqrt{\frac{\ln(d)}{1-\lambda} \sum_{i=1}^n \biggl\Vert \bbE[\bX_i^2] \biggr\Vert}  + \frac{c}{1-\lambda} \ln(d) R.\label{eq:IcyCrow}
\end{equation}
Here, $1-\lambda$ is the \emph{absolute spectral gap} of the Markov chain $Z$ and quantifies its decay of dependence; see \cite[Definition 3.1]{neeman2023concentration}.

Such results with dependent summands have significant practical relevance as these arise in numerous applications.
For instance, sums of dependent random matrices generated by a stochastic process are crucial in the analysis of time series \cite{banna2016bernstein,han2020moment}, randomness-efficient sampling methods \cite{garg2018matrix}, random walk based graph embedding methods \cite{qiu2020matrix}, convergence rates of Hessian matrices in stochastic gradient descent \cite{neeman2023concentration}, and state space reduction methods in reinforcement learning \cite{sanders2020clustering,jedra2023nearly}.
Matrix concentration inequalities allow analyzing such settings with only minimal model-specific computations, while the dependencies involved would otherwise complicate the analysis.

The flexibility however comes at a cost: most previous results are not sharp in typical applications.
For example, the bound that results if one applies \eqref{eq:YoungRag} to a Wigner matrix is loose by a logarithmic dimensional factor.
Moreover, note that the variance proxy $\Vert \sum_{i} \bbE[\bX_i^2] \Vert$ in \eqref{eq:YoungRag} is replaced by $\frac{1}{1-\lambda}\sum_{i} \Vert \bbE[\bX_i^2] \Vert$ in \eqref{eq:IcyCrow}.
Moving the sum outside the operator norm is insubstantial if summands are identically distributed, but it can be highly inefficient if summands encode different matrix structure as is necessary in some applications.
For instance, if \eqref{eq:IcyCrow} is applied to a Wigner-type matrix, then the resulting bound is loose by a factor of order $\sqrt{\ln(d+1)}\sqrt{d}$; see also Section \ref{sec: ApplicationsMarkov}.
Not only does this give a logarithmic dimensional factor, the suboptimality is here even polynomial in the dimensionality.

\subsection{Universality-based concentration}
With the goal to achieve sharp concentration estimates in dependent settings, we here pursue a relatively new approach.
Our main result, Theorem \ref{thm: MainTracial}, establishes that a sum of matrices generated by a Markov chain has similar spectral properties to a Gaussian matrix with matching covariance structure.
By combining this universality principle with results from the Gaussian literature \cite{bandeira2024matrix,bandeira2021matrix,lust1986inegalites}, we then achieve practical concentration estimates in Corollaries \ref{cor: MarkovMarkovBound} and \ref{cor: Khintchine}.
Crucially, this is done in a flexible nonasymptotic setting allowing nonhomogeneous covariance and dependent summands.

A key advantage of a universality-based approach is that it enables a leading-order term that only depends on the covariance structure of the summed matrix.
Direct dependence on the summands or on the specific structure of the Markov chain only occurs through lower-order terms coming from the approximation error in the universality statement.
In particular, this yields better variance proxies than are used in \cite{garg2018matrix,qiu2020matrix,neeman2023concentration,banna2016bernstein,han2020moment}.
Moreover, by relying on recent advances in the Gaussian literature \cite{bandeira2021matrix,bandeira2024matrix}, we can achieve a sharp estimate of the form
\begin{equation}
	\bbE\biggl[\biggl\Vert \sum_{i=1}^n \bX_i \biggr\Vert \biggr]  \leq  C_{\textnormal{free}} \sqrt{\biggl\Vert  \bbE\biggl[\biggl(\sum_{i=1}^n\bX_i\biggr)^2 \biggr] \biggr\Vert} + \varepsilon \ln(d). \label{eq:WittyArm}
\end{equation}
Here, $1 \leq C_{\textnormal{free}} \leq 2$ is a sharp free-probabilistic constant with explicit dependence on the covariance structure of the summed matrix, and the error term $\varepsilon$ has explicit dependence on the model parameters and is small in typical applications; see Corollary \ref{cor: MarkovMarkovBound}.

Universality hence enables sharp results that are inaccessible though previous results with dependencies.
We achieve an optimal constant on the main term that, in particular, removes the often-suboptimal logarithmic factor if $\varepsilon$ is sufficiently small; note that $C_{\free}\leq 2$.
Moreover, even if one does not care about constants or logarithms, the main term in \eqref{eq:WittyArm} has a natural variance proxy that can yield significant improvements relative to results like \eqref{eq:IcyCrow} in at least two ways.
First, by having the sum inside the operator norm, the variance proxy $\Vert \bbE[(\sum_{i}\bX_i)^2] \Vert$ can enable polynomial dimensional improvements for applications where the summands have different matrix structure.
Second, moving the sum inside the square optimally\footnote{Jensen's inequality yields that $\bbE[\Vert \sum_i \bX_i \Vert^2] \geq \Vert \bbE[ (\sum_i\bX_i)^2] \Vert$.
	This shows that the leading term in \eqref{eq:WittyArm} is optimal up to the constant $C_{\textnormal{free}}\leq 2$, at least if the $L^1$ norm is replaced by the $L^2$ norm.
	Moreover, Corollary \ref{cor: MarkovMarkovBound} gives two-sided bounds on $L^p$ norms that show that the constant $C_{\textnormal{free}}$ is also optimal when $p\approx \ln(d)$.
	One can make it rigorous that the $L^1$ norm may be replaced by an $L^p$ norm conditional on a concentration--of--measure ingredient.
	We presently however only have that ingredient in special cases; see the discussion after \eqref{eq: MarkovMarkovBound}.
} incorporates the dependence between summands in the way that it naturally appears in the summed matrix.
In particular, this is more efficient than worst-case variance proxies of the form $D\times \Vert \bbE[\sum_{i}\bX_i^2] \Vert$ with $D$ a dependence coefficient for the Markov chain, like one based on a spectral gap; see also the discussion after Lemma \ref{lem: QualitativeParamEstimates}.

Examples illustrating the aforementioned features are given in Section \ref{sec: Applications}.
We briefly consider matrices that are filled entry-wise by a Markov chain in Section \ref{sec: ApplicationsMarkov}, demonstrating that our results recover the optimal asymptotic order for Wigner-type matrices.
Next, Section \ref{sec: ApplicationsBMC} considers an application to \emph{block Markov chains}.
The latter are a model for Markov chains with a cluster structure in the state space, and the properties of associated random matrices are crucial in the analysis of spectral clustering algorithms that recover the clusters based on an observed sample path \cite{sanders2020clustering,sanders2023spectral,zhang2020spectral,jedra2023nearly}.
We use our general-purpose nonasymptotic theory to achieve improvements of results that had been derived using model-specific asymptotic analysis in \cite{vanwerde2023singular,sanders2021spectral}; see Theorem \ref{thm: BMC_Noise_Free} and Proposition \ref{prop: SingValN}.

\subsection{Related work}\label{sec: Related}
Universality in classical random matrix theory has attracted significant attention, including efforts to relax the independence and homogeneity assumptions \cite{erdHos2019random,ajanki2019stability,sodin2010spectral}.
However, most previous results adopt a semiclassical perspective where the departure from the classical regime remains manageable.
For instance, the asymptotic results in \cite{erdHos2019random} allow non-identically distributed and dependent entries but adopt mean-field conditions limiting the amount of nonhomogeneity in the variance profile of the matrix as well as assumptions that impose decaying correlations; see \cite[Assumptions C to E]{erdHos2019random}.

That universality remains valid in the nonasymptotic and nonhomogeneous framework of matrix concentration results is not explained by such semiclassical results.
This is a more recent development due to Brailovskaya and Van Handel \cite{brailovskaya2022universality} who considered a setting with independent summands, and established that universality arises in such models through an operator-theoretic mechanism.
Their work served as an inspiration for our investigations.
As discussed in Section \ref{sec: IntroProofTech}, however, the extension from independent summands to a Markovian setting is nontrivial due to the dependence involved.
Our technical contribution is to identify the appropriate proof techniques to handle the dependence.

In the previous literature on matrix concentration inequalities with dependence, one can distinguish the following models: Markovian \cite{neeman2023concentration,garg2018matrix,qiu2020matrix,raohoeffding}, $\beta$ or $\tau$-mixing stochastic processes that can be non-Markovian \cite{banna2016bernstein,han2020moment}, martingales \cite{oliveira2009concentration,bacry2018concentration,tropp2011freedman}, exchangeable pairs \cite{mackey2014matrix,paulin2016efron}, and negative dependence \cite{aoun2020matrix,kyng2018matrix,adamczak2025matrix,kaufman2022scalar}.
None of these previous results can access sharp bounds with a sharp constant on the main term like \eqref{eq:WittyArm}.
Indeed, sharp bounds in the flexible setting of matrix concentration inequalities are a recent development, even for Gaussian matrices \cite{bandeira2021matrix,bandeira2024matrix}.
Specifically, free-probabilistic upper bounds in the Gaussian setting are due to Bandeira, Boedihardjo, and Van Handel \cite{bandeira2021matrix}, and lower bounds that establish the sharpness of \cite{bandeira2021matrix} were recently proven by Bandeira, Cipolloni, Schr{\"o}der, and Van Handel \cite{bandeira2024matrix}.

\pagebreak[3]
To our knowledge, all previous results that are applicable in Markovian settings with general matrix-valued summands involve variance proxies of order $ \geq D \times \sum_{i}\Vert \bbE[\bX_i^2] \Vert$ on the leading term with $D$ a dependence coefficient \cite{neeman2023concentration,garg2018matrix,qiu2020matrix,banna2016bernstein,han2020moment}.\footnote{The results in \cite{neeman2023concentration} are the only ones with this exact shape for the variance proxy, but the proxies used in \cite{garg2018matrix,qiu2020matrix,banna2016bernstein,han2020moment} are at least as large.
	Specifically, \cite{banna2016bernstein,han2020moment} replace $\sum_{i}\Vert \bbE[\bX_i^2] \Vert$ by $\max\{ (n/\#K) \sum_{i\in K}\Vert \bbE[\bX_i^2] \Vert: K\subseteq \{1,\ldots,n \} \}$ and the results in \cite{garg2018matrix,qiu2020matrix} only use the boundedness of the summands' operator norm as an input, without an explicit variance proxy, effectively replacing $\bbE[\bX_i^2]$ by its worst-case bound $\Vert \Vert \bX_i \Vert^2 \Vert_{L^\infty}$.
}
As explained earlier, having the norm inside the sum even results in polynomially suboptimal results in some applications like Wigner-type matrices.
In the special case where $\bX_i = f_i(Z_i) \bA_i$ for scalar-valued functions $f_i$ and deterministic matrices $\bA_i$, a variance proxy  $ D \times \Vert \sum_{i} \bbE[\bX_i^2] \Vert$ that moves the sum inside the norm follows from \cite[Theorem 1.3]{raohoeffding}.
That result yields bounds of the correct order of magnitude for Wigner-type matrices but does not cover general matrix-valued summands and does not give a sharp constant.

Moreover, note that all the aforementioned results involve a multiplicative dependence coefficient on their variance proxy and not the natural quantity $\Vert \bbE[(\sum_{i} \bX_i)^2] \Vert$ present in the leading-order term in \eqref{eq:WittyArm}.
That dependence coefficients occur somewhere in the results is necessary, of course, since one can not expect good concentration estimates if the summands may have arbitrary dependence.
In our results, however, the dependence coefficients will only occur in the error term $\varepsilon$ in \eqref{eq:WittyArm} which is often small in applications.
Thus, by passing through Gaussian theory, the leading-order term in our results can incorporate the dependence in the way that it actually appears in the summed matrix.
This can lead to more efficient estimates, especially if the Markov chain exhibits slow decay of dependence.

Previous results that are specific to time-homogeneous Markov chains have quantified the dependence using quantities based on a spectral gap \cite{neeman2023concentration,garg2018matrix,qiu2020matrix,raohoeffding}, while results that extend to time-inhomogeneous or non-Markovian settings have used  $\beta$-and $\tau$-mixing dependence coefficients \cite{banna2016bernstein,han2020moment}.
We here consider a Markovian setting that allows time-inhomogeneity and use a $\psi$-dependence coefficient; see \eqref{eq: Def_PsiDependence}.
This dependence coefficient is sufficient for the applications we have in mind, but note that it is a strong assumption relative to aforementioned notions of dependence.
It would be interesting future work if one could establish extensions of our results with weaker dependence coefficients.
In particular, extensions using a spectral gap would be interesting as these have applications in theoretical computer science such as randomness-efficient sampling methods \cite{garg2018matrix}.

\subsection{Proof techniques}\label{sec: IntroProofTech}

Our proof involves an interpolation $\{\bS(t): t\in [0,1] \}$ from the summed matrix $\sum_i \bX_i$ to an independent Gaussian matrix with matching mean and covariance structure.
Then, to show that the summed matrix and the Gaussian have similar spectral properties, it suffices to control the rate of change along the interpolation.
This approach is inspired by \cite{brailovskaya2022universality} who established universality in a setting with independent summands.
In their setting, the rate of change along the interpolation could be controlled using an expansion in terms of classical cumulants.
Such an expansion is efficient when one assumes independence due to an \emph{independence--implies--vanishing property} of classical cumulants.
This property allows \cite{brailovskaya2022universality} to neglect all terms involving independent summands.

We however lack independence---our summands have Markovian dependence---and hence risk a combinatorial explosion in the number of terms.
To solve this, one could hope that classical cumulants with approximately independent random variables are small although not necessarily zero.
Such a property indeed holds true to some extent, but we found that a proof using only classical cumulants does not yield efficient estimates.
This can be explained in hindsight from the fact that Markov chains are time-ordered, which classical cumulants do not respect because they are permutation invariant.

Our approach rather relies on \emph{Boolean cumulants}.
Boolean cumulants are not as well known as classical cumulants but enjoy a natural interaction with the underlying Markovian structure, thus solving the key issue mentioned above; see Proposition \ref{prop: BooleanMarkov}.
This interaction was historically first exploited by Statulevi{\v{c}}ius \cite{saulis1991limit,statulevivcius1969limit}.\footnote{In \cite{saulis1991limit,statulevivcius1969limit}, Boolean cumulants are called \emph{centered moments}.}
Our setting is however more delicate than the one studied in \cite{saulis1991limit,statulevivcius1969limit} because we are concerned with matrix-valued summands.
To be specific, issues arise from noncommutativity preventing one from reordering a product of matrices in a time-ordered fashion.
To circumvent this, we dualize the problem in terms of the transition densities of the Markov chain and subsequently rely on a change--of--measure argument to encode the decay of dependence in scalar-valued random variables; see Proposition \ref{prop: BookkeepingLambdaPsi}.
This allows us to leave the matrices in the original order and is one of the critical new ideas for our approach, as it is ultimately what enables us to circumvent the difficulties for a Markovian and noncommutative setting.

Combining this idea with a classical--to--Boolean relation due to Arizmendi, Hasebe, Lehner, and Vargas \cite{arizmendi2015relations} allows us to establish an expansion for the rate of change along the interpolation which efficiently incorporates the decay of dependence in the Markovian sequence; see Proposition \ref{prop: MarkovNewVariables}.
The contribution of the random variables which we used to encode the decay of dependence is here not immediately obvious.
The size of these random variables is namely linked to the nontrivial combinatorics associated with the classical--to--Boolean relation from \cite{arizmendi2015relations}.
We perform an analysis of the combinatorics involved in Section \ref{sec: DeltasPsi}.

We finally note that Boolean cumulants are also studied in the free-probabilistic literature \cite{speicher12boolean}.
This may lead one to believe that the free-probabilistic constant in \eqref{eq:WittyArm} arises in our proof in the step where we rely on Boolean cumulants.
This is however not the case.
As was sketched above, Boolean cumulants allow us to exploit the Markovianity whereas the free-probabilistic main term shows up when we use the Gaussian theory of \cite{bandeira2021matrix,bandeira2024matrix}.

\subsection{Notation}
For a real-valued random variable $W$, we denote $\Vert W \Vert_{\Linf}$ for the \emph{essential supremum} of $\lvert W \rvert$.
We equip $\bbC^d$ with the Euclidean norm $\Vert v \Vert \de \sqrt{\langle v,v \rangle}$.
For a matrix $\bM \in \bbC^{d\times d}$ we denote $\Vert \bM \Vert_{\op}\de \sup_{\Vert v \Vert = \Vert w \Vert = 1} \lvert \langle v, \bM w \rangle \rvert$ for the \emph{operator norm}.
Denote $\tr \bM \de \frac{1}{d}\Tr\bM$ for the \emph{normalized trace} of $\bM$ and let it be understood that $\tr \bM^p = \tr[\bM^p]$.
The collection of all self-adjoint $d\times d$ matrices is denoted $\Csa^{d\times d}$.

\subsection{Structure of this paper}
We state our results in Section \ref{sec: Results}.
Two illustrative applications of these results are discussed in Section \ref{sec: Applications}.
The proofs of the main universality result and its corollaries are given in Sections \ref{sec: MainUniversalityProof} and \ref{sec: BoundsOpNormMarkov}, respectively.
Some supplemental details related to the applications of our results are deferred to the appendices.

\section{Results}\label{sec: Results}
\subsection{Matrix models and parameters}\label{sec: MarkovModel}
Consider a sequence of random variables $Z \de (Z_i)_{i=1}^n$ such that $Z_i$ takes values in a standard Borel space $\cZ_i$ for any $i \geq 1$.
This sequence is said to be \emph{Markovian} if for each $i > 1$ and any measurable $E \subseteq \cZ_i$,
\begin{equation}
	\bbP(Z_i \in E \mid Z_1,\ldots,Z_{i-1})  = \bbP(Z_i \in E \mid Z_{i-1}),
\end{equation}
almost surely.
A sequence of self-adjoint random matrices $(\bX_i)_{i=0}^n$ is said to come from a \emph{Markovian model associated to Z} if $\bX_0$ is a deterministic matrix and there exist measurable functions $\bF_i:\cZ_i \to \Csa^{d\times d}$ such that $\bX_i = \bF_i(Z_i)$ with $\bbE[\bX_i]=0$ for every $i\geq 1$.
We are concerned with the sum:
\begin{equation}
	\bS \de \bX_0 + \sum_{i=1}^n \bX_i.\label{eq: Def_S}
\end{equation}
Note that this is more general than the setting in the introduction insofar that $\bS$ may have nonzero mean and the Markovian sequence may be time-inhomogeneous.
Our main result compares the spectral properties of $\bS$ with those of a matrix with jointly Gaussian entries.
\subsubsection{Gaussian model}\label{sec: GaussianModel}
Let $\Cov(\bS)$ denote the $d^2 \times d^2$ covariance matrix of the entries of the self-adjoint matrix $\bS$.
That is, for any $i,j,k,l \in \{1,\ldots,d \}$
\begin{equation}
	\Cov(\bS)_{ij,kl} \de \bbE\bigl[(\bS - \bbE[\bS])_{i,j}\overline{(\bS - \bbE[\bS])_{k,l}}\bigr].
\end{equation}
Here, $\overline{z}$ denotes the complex conjugate of a complex number $z\in \bbC$.
A self-adjoint random matrix $\bG$ satisfying the following properties is called a \emph{Gaussian model of $\bS$}:
\begin{enumerate}
	\item The $2d^2$-dimensional real-valued vector consisting of the real and imaginary parts of the entries, $\{\Re \bG_{i,j} , \Im \bG_{i,j}: i,j \in \{1,\ldots,d \} \}$, is Gaussian.
	\item The mean and covariance match: $\bbE[\bG] = \bbE[\bS]$ and $\Cov(\bG) = \Cov(\bS)$.
	\item The random matrices $\bG$ and $\bS$ are independent.
\end{enumerate}
We next introduce parameters that are used in our results to quantify to what extent the spectral properties of $\bS$ are matched by those of its Gaussian model.

\subsubsection{Dependence parameter}\label{sec: DependenceParameter}
Given a probability space $(\Omega, \cF, \bbP)$ and $\sigma$-algebras $\cA, \cB \subseteq \cF$, the \emph{$\psi$--dependence coefficient} of $\cA$ and $\cB$ is defined by
\begin{equation}
	\psi(\cA , \cB) \de \sup_{A \in \cA: \bbP(A) >0}\sup_{B \in \cB: \bbP(B) > 0}\, \Bigl\lvert\, \frac{\bbP(A \cap B) - \bbP(A)\bbP(B)}{\bbP(A)\bbP(B)}\, \Bigr\rvert.\label{eq: Def_PsiDependence}
\end{equation}
For two random variables $V,W$ defined on the same probability space we denote $\psi(V , W) \de \psi(\sigma(V) , \sigma(W))$ for the dependence coefficient between the associated $\sigma$-algebras.

The following parameter then allows us to bound the amount of dependence in the Markovian sequence of random variables $Z$:
\begin{equation}
	\Psi(Z) \de \min\Bigl\{ j \geq 1: \psi(Z_{i+j} , Z_i ) \leq \frac{1}{4} \ \text{ for all }i\in \{1,\ldots,n-j \}\Bigr\}.\label{eq: Def_psiMixing}
\end{equation}
Let us remark that the occurrence of $1/4$ in the definition \eqref{eq: Def_psiMixing} of $\Psi(Z)$ is not significant.
Similar results can be achieved if any other number  between zero and one is used.

\begin{remark}\label{rmk: Phi}
	Another quantity that is commonly used to quantify the dependence in a Markov chain is given by the \emph{total variation mixing time} $\tmix$  \cite[Section 4.5]{levin2017markov}.
	To apply our results, it may be useful to know that $\Psi(Z)$ can be bounded using the latter if $Z$ is a stationary ergodic Markov chain on a finite state space.
	Specifically, one then has $\Psi(Z) \leq (3 + \log_2(1/\min_i \pi_i)) \tmix$ with $\pi$ the stationary distribution; see Appendix \ref{apx: Psi} for a proof.
\end{remark}

\subsubsection{Matrix parameters}\label{sec: ParametersMarkovian}

All our results assume that the summands $\bX_i$ are bounded in operator norm.
We denote $R(\bX)$ for the corresponding parameter:
\begin{equation}
	R(\bX)\de \Bigl\Vert \max_{1\leq i \leq n} \Vert \bX_i \Vert_{\op} \Bigr\Vert_{\Linf} < \infty.\label{eq: Def_R(X)}
\end{equation}
We further introduce the following variance proxies:
\begin{equation}
	\sigma(\bS)^2 = \bigl\Vert \bbE\bigl[(\bS-\bbE[\bS])^2\bigr] \bigr\Vert_{\op} \ \textnormal{ and }\ \varsigma(\bX)^2 \de \biggl\Vert \bbE\biggl[\sum_{i=1}^n \bX_{i} ^2\biggr] \biggr\Vert_{\op}. \label{eq: Def_sigma}
\end{equation}
Note that $\varsigma(\bX) = \sigma(\bS)$ in the special case where $\bX_1,\ldots,\bX_n$ are independent.

Theorem \ref{thm: MainTracial} establishes that $\bS$ can be controlled in terms of the Gaussian model $\bG$.
Thereafter, we rely on results from \cite{bandeira2021matrix,bandeira2024matrix} which control $\bG$ in terms of a \emph{free-probabilistic model} $\bS_{\free}$.
The quality of the bounds in \cite{bandeira2021matrix,bandeira2024matrix} is determined by the following parameter:
\begin{equation}
	v(\bS)^2 \de \Vert \Cov(\bS) \Vert_{\op}. \label{eq: Def_v(S)}
\end{equation}
The only free-probabilistic quantity which is used in the statement of our results is the norm $\Vert \bS_{\free} \Vert$.
For the sake of brevity, we hence forego the precise definition\footnote{The free model of a Gaussian matrix is defined in \cite[(2.1)]{bandeira2021matrix} and we let $\bS_{\free} \de \bG_{\free}$ with $\bG$ the Gaussian model of $\bS$. } of the model itself and simply note that an identity by Lehner \cite{lehner1999computing}, \cite[Lemma 2.4]{bandeira2021matrix} allows one to compute the norm in terms of the mean and covariance structure of $\bS$ as
\begin{equation}
	\Vert \bS_{\free} \Vert =  \negsp \max_{\eta \in \{+1, - 1\}} \inf_{\bW \succ 0} \lambda_{\max}\Bigl(\bW^{-1} + \eta \bbE[\bS] + \bbE\bigl[(\bS- \bbE[\bS])\bW (\bS- \bbE[\bS])\bigr]  \Bigr),\label{eq: Lehner}
\end{equation}
where the infimum runs over deterministic positive definite $d\times d$ matrices $\bW$, and $\lambda_{\max}$ refers to the greatest eigenvalue.

\begin{remark}\label{rem: Pisier}
	An estimate by Pisier \cite{pisier2003introduction}, \cite[Lemma 2.5]{bandeira2021matrix} provides a user-friendly bound on the free-probabilistic quantity:
	\begin{equation}
		\max\bigl\{\Vert \bbE[\bS] \Vert_{\op}, \sigma(\bS)\bigr\}  \leq \Vert \bS_{\free} \Vert \leq \Vert \bbE[\bS] \Vert_{\op} + 2\sigma(\bS).\label{eq: Pisier}
	\end{equation}
	In particular, it follows that $\sigma(\bS) \leq \Vert \bS_{\free} \Vert \leq 2\sigma(\bS)$ if $\bbE[\bS] = 0$.
	For centered random matrices, the role of the exact formula \eqref{eq: Lehner} is hence that it enables sharp constants on the leading-order term.
	If one is not concerned with constant factors, then one can simply use the bounds in terms of $\sigma(\bS)$     and avoid computing $\Vert \bS_{\free} \Vert$.
\end{remark}
\begin{remark}\label{rem: sigma_v_psi}
	For an additional simplification of the parameters, consider the following upper bounds proven in Section \ref{sec: BoundSigmaV}:
	\begin{equation}
		\sigma(\bS)^2 \leq 3 \Psi(Z) \varsigma(\bX)^2 \ \textnormal{ and }\ v(\bS)^2 \leq 3 \Psi(Z) \biggl\Vert \sum_{i=1}^n \Cov(\bX_i) \biggr\Vert. \label{eq:OilyBird}
	\end{equation}
	These bounds are tight up to the absolute constant if the summands $\bX_i$ are independent.
	If the Markov chain mixes slowly, however, then directly using the definition of $\sigma(\bS)$ and $v(\bS)$ can sometimes yield significant improvements relative to these bounds, depending on the structure of the summed matrix; see the discussion after Lemma \ref{lem: QualitativeParamEstimates}.
\end{remark}

\subsection{Results}\label{sec: SubsecResultsMarkovian}
Our main result establishes universality for the tracial moments of $\bS$, showing that these can be approximated by the tracial moments of the Gaussian model:
\begin{theorem}\label{thm: MainTracial}
	There exists an absolute constant $c>0$ such that for every integer $p\geq 1$,
	\begin{equation}
		\lvert \bbE[\tr \bS^{2p}]^{\frac{1}{2p}} - \bbE[\tr \bG^{2p}]^{\frac{1}{2p}} \rvert \leq c R(\bX)^{\frac{1}{3}} \Psi(Z)^{\frac{2}{3}}\varsigma(\bX)^{\frac{2}{3}}  p^{\frac{2}{3}} + cR(\bX) \Psi(Z) p.\nonumber
	\end{equation}
\end{theorem}
Most relevant for applications of this result, one can deduce estimates on the operator norm of a random matrix given an estimate on its tracial moments by using that $\Vert \bM \Vert^{2p}/d \leq \tr \bM^{2p} \leq \Vert \bM \Vert^{2p}$ for any self-adjoint $d\times d$ matrix $\bM$.
This implies that practical bounds on $\Vert \bS \Vert$ can be deduced whenever such bounds are available for its Gaussian model; see Lemma \ref{lem: GeneralLpGaussian}.
For example, by combining Theorem \ref{thm: MainTracial} with recent results from \cite{bandeira2021matrix,bandeira2024matrix}, we find the following sharp two-sided bounds:

\begin{corollary}\label{cor: MarkovMarkovBound}
	There exists an absolute constant $C>0$ such that for every $p\geq 1$:
	\begin{equation*}
		d^{-\frac{1}{2p}}  \Vert \bS_{\free} \Vert_{\op} - C\cE(p) \leq \bbE\bigl[\Vert \bS \Vert^{2p}\bigr]^{\frac{1}{2p}} \leq d^{\frac{1}{2p}}  \Vert \bS_{\free} \Vert_{\op} + Cd^{\frac{1}{2p}}\cE(p)
	\end{equation*}
	where the error term $\cE(p)$ is defined by
	\begin{equation*}
		\cE(p) \de   R(\bX)^{\frac{1}{3}} \Psi(Z)^{\frac{2}{3}}  \varsigma(\bX)^{\frac{2}{3}}  p^{\frac{2}{3}} + R(\bX) \Psi(Z) p + v(\bS)^{\frac{1}{2}}\sigma(\bS)^{\frac{1}{2}}\Bigl( p^{\frac{1}{2}} + \ln(d+1)^{\frac{3}{4}} \Bigr).\nonumber
	\end{equation*}
\end{corollary}
Taking $p$ a sufficiently large multiple of $\ln(d)$ in Corollary \ref{cor: MarkovMarkovBound} ensures that $d^{1/2p} \leq 1 + \delta$ for some arbitrarily small $\delta >0$.
If the parameters occurring in the error term  $\cE$ are small, as is often the case in applications of the result, then it follows that both the lower and upper bounds in Corollary \ref{cor: MarkovMarkovBound} are given by $\Vert \bS_{\free} \Vert$ up to a small error.
In particular, this implies the upper bound summarized in \eqref{eq:WittyArm} for centered matrices since Remark \ref{rem: Pisier} then yields that $\Vert \bS_{\free} \Vert = c\sigma(\bS)$ for some $1 \leq c\leq 2$ and since $\bbE[\Vert \bS \Vert] \leq \bbE[\Vert \bS \Vert^{2p}]^{1/2p}$ by Jensen's inequality.

Further, combining Corollary \ref{cor: MarkovMarkovBound} with Markov's inequality implies that for every $\delta >0$ there exists a constant $c>0$ such that the following upper tail bound holds for every $x>0$:
\begin{equation}
	\bbP\bigl(  \Vert \bS \Vert_{\op} \geq  (1+\delta)\Vert \bS_{\free} \Vert  +  \cE(x)\bigr) \leq (d+1)\exp( - cx). \label{eq: MarkovMarkovBound}
\end{equation}
The proof details are given in Section \ref{sec: BoundsOpNormMarkov}.
We also expect a lower bound $\Vert \bS \Vert \geq (1-\delta)\Vert \bS_{\free} \Vert$ to hold with high probability, but this is not immediate from Corollary \ref{cor: MarkovMarkovBound} as extracting lower bounds from moments requires a separate concentration--of--measure ingredient; see Lemma \ref{LEM: SAMSON} and Remark \ref{rem: Samson} for further discussion in a special case with summands of the form $\bX_i = f_i(Z_i) \bB_i$ for scalar functions $f_i$ and deterministic matrices $\bB_i\in \Csa^{d\times d}$.

The price for the sharpness of the bounds is that it necessitates estimating quite a few matrix parameters when applying the results.
The universality principle can however also be combined with different results for Gaussian matrices, which may be more suitable if one is not concerned with sharp constants or logarithmic dimensional factors.
For instance, using the matrix Khintchine inequality of Lust--Piquard \cite{lust1986inegalites}, \cite[Section 2.3]{tropp2018second} instead of the free-probabilistic results from \cite{bandeira2021matrix,bandeira2024matrix} yields the following estimate removing $v(\bS)$ at the cost of a polylogarithmic factor, but still with the natural variance proxy $\sigma(\bS)$ on the main term:
\begin{corollary}\label{cor: Khintchine}
	Additionally assume that $\bbE[\bS]=0$.
	Then, there exists an absolute constant $c>0$ such that
	\begin{equation}
		\bbE[\Vert \bS \Vert] \leq c \ln(d+1)^{1/2}\sigma(\bS) + c R(\bX)^{\frac{1}{3}} \Psi(Z)^{\frac{2}{3}}\varsigma(\bX)^{\frac{2}{3}}  \ln(d+1)^{\frac{2}{3}} + cR(\bX) \Psi(Z) \ln(d+1).\nonumber
	\end{equation}
\end{corollary}

So,  combining the universality result with appropriately chosen bounds from the Gaussian literature can enable practical concentration estimates.
We illustrated this with Corollaries \ref{cor: MarkovMarkovBound} and \ref{cor: Khintchine}, and one could naturally also combine Theorem \ref{thm: MainTracial} with other results from the Gaussian literature.
Another potential use-case for the universality of tracial moments from Theorem \ref{thm: MainTracial} is to establish limiting laws for empirical eigenvalue distributions or empirical singular value distributions.
An example illustrating this is given in Section \ref{sec: LimitingSingValue}.

\begin{remark}\label{rem: FullSpec}
	One can extract concentration inequalities for extremal eigenvalues from Corollary \ref{cor: MarkovMarkovBound}.
	By replacing $\bS$ by $\bS' \de \bS + t\b1$ for $t > \Vert \bX_0 \Vert_{\op} + n R(\bX)$ one can namely ensure that $\Vert \bS' \Vert_{\op} = \lambda_{\max}(\bS) + t$.
	Similarly, one can consider $\bS - t\b1$ to establish a bound on $\lambda_{\min}(\bS)$.

	This trick does not apply to nonextremal eigenvalues.
	In this context, let us note that \cite[Theorem 2.4]{brailovskaya2022universality} establishes concentration for the entire spectrum with regard to the Hausdorff distance when the summands are independent.
	We believe that a similar result should hold true in a Markovian setting and that Boolean cumulants would be an important ingredient in the proof.
	For the sake of brevity, however, we do not pursue this extension here.
\end{remark}

\section{Examples}\label{sec: Applications}

\subsection{Markovian entries}\label{sec: ApplicationsMarkov}
We start by briefly considering a symmetric matrix with entries defined by a Markov chain.
The goal is to give some intuition on the matrix parameters in a simple setting.
Further, it is be possible to establish two-sided tail bounds in this case.

Consider scalar random variables of the form $f_t(Z_t)$ for $Z_1,\ldots,Z_n$ a Markovian sequence and $f_t$ real-valued functions.
Suppose that $n \de d(d+1)/2$ and fix a bijective function $\varphi: \{1,\ldots,n\} \to  \cI$ with $\cI$ the set of unordered pair $\{i,j \}$ satisfying $i, j \in \{1,\ldots,d \}$.
Then, we can define a symmetric $d\times d$ random matrix by
\begin{equation}
	\bS \de  \sum_{t\leq n} \bX_{t}\  \textnormal{ with }\  \bX_{t} \de
	\begin{cases}
		f_t(Z_t) (e_{i}e_j^{\transpose} +   e_j e_i^{\transpose}) & \textnormal{if }\varphi(t) = \{i,j \} \textnormal{ with }i \neq j, \\
		f_t(Z_t) e_{i}e_i^{\transpose}                            & \textnormal{if }\varphi(t) = \{i,i\},                              \\
	\end{cases} \label{eq:AngryParrot}
\end{equation}
where $e_1,\ldots,e_n \in \bbR^d$ is the standard basis.
Assume that $\bbE[f_t(Z_t)] = 0$ for all $t$.

\begin{lemma}\label{lem: MarkovEntriesParamEst}
	For \eqref{eq:AngryParrot}, it holds that
	\begin{equation}
		R(\bX) = \max_{i,j\leq d}\Vert \bS_{i,j} \Vert_{\Linf} \ \textnormal{ and }\ \varsigma(\bX)^2 =\max_{i\leq d}\sum_{j\leq d} \bbE\Bigl[\bS_{i,j}^2\Bigr].\label{eq:BlueBox}
	\end{equation}
	Further, we have $\sigma(\bS)^2 \leq 3 \Psi(Z)  \varsigma(\bX)^2$ and
	$
		v(\bS)^2 \leq 6 \Psi(Z) \max_{i,j \leq d} \bbE[\bS_{i,j}^2].
	$
\end{lemma}
\begin{proof}
	The bounds in \eqref{eq:BlueBox} are immediate from the definitions \eqref{eq: Def_R(X)} and \eqref{eq: Def_sigma}, where we use that the operator norm of a diagonal matrix is equal to the maximum of its entries when computing $\varsigma(\bX)^2$.
	The bound on $\sigma(\bS)^2$ is in \eqref{eq:OilyBird}.

	Finally, regarding the estimate on $v(\bS)$, recall \eqref{eq:OilyBird} and note that $\sum_{t\leq n}\Cov(\bX_{t})$ can be written in a block diagonal form with blocks of size $\leq 2$ associated with the symmetric entries $(i,j)$ and $(j,i)$.
	Those blocks have norm $\Vert \Cov(\bX_t) \Vert \leq 2\mathbb{E}[f_t(Z_t)^2]$ with equality if $\varphi(t) = \{i, j\}$ for $i\neq j$.
	The estimate then follows because the norm of a block diagonal matrix is the greatest norm of its blocks.
\end{proof}

For comparison, the variance proxy in the Markovian Bernstein inequality \eqref{eq:IcyCrow} from \cite{neeman2023concentration} has the following expression for \eqref{eq:AngryParrot}:
\begin{equation}
	\sum_{t \leq n} \Vert \bbE[\bX_{t}^2] \Vert  = \sum_{i=1}^d \sum_{j \geq i } \bbE\Bigl[f_{\varphi^{-1}(i,j)}(Z_{\varphi^{-1}(i,j)})^2  \Bigr] = \sum_{i=1}^d \sum_{j \geq i } \bbE\Bigl[\bS_{i,j}^2  \Bigr].   \label{eq:CalmPen}
\end{equation}
So, the difference between the variance proxies is that the sum over $i$ in \eqref{eq:CalmPen} is replaced by a maximum in $\varsigma(\bX)$ and $\sigma(\bS)$.
This allows for bounds with good dimensional dependence in applications like Wigner-type matrices, that are inaccessible with the weaker variance proxy.

For instance, suppose that the $Z_t$ are generated by some fixed $\psi$-mixing Markov chain and that $\Vert \bS_{i,j} \Vert_{\Linf} \leq 1$ for all $i,j\leq d$.
Then, using that $\Vert \bS_{\free} \Vert \leq 2\sigma(\bS)$ by \eqref{eq: Pisier}, substituting the estimates from Lemma \ref{lem: MarkovEntriesParamEst} in the upper bound in Corollary \ref{cor: MarkovMarkovBound} yields that $\bbE[\Vert \bS \Vert] = O(\sqrt{d})$ as $d\to \infty$.
Thus, we recover the correct asymptotic order in this example: recall that $\Vert \bS \Vert \approx 2\sqrt{d}$ in the special case where the entries are independent and identically distributed.
For comparison, bounds using $\sum_t \Vert \bbE[\bX_t^2] \Vert$ as variance proxy like \eqref{eq:IcyCrow} would give a suboptimal asymptotic order $O(d\sqrt{\ln(d+1)})$ which is loose by a factor of order $\sqrt{d \ln(d)}$.

The appropriate order of magnitude in this specific model could also have been extracted from earlier results in the literature such as \cite[Corollary 1.4]{raohoeffding}, but the sharpness of our results allows one to go further.
One could now also determine an explicit constant in the big-$O$ notation based on the free-probabilistic quantity $\Vert \bS_{\free} \Vert$.
The latter does not admit a simple exact expression at this level of generality, but this may be possible in some special cases depending on the covariance structure of the entries.
For instance, a simplified expression in the case where $\bS$ has independent entries may be found in \cite[Lemma 3.2]{bandeira2021matrix}.
Whatever it may be, the free-probabilistic bound is here asymptotically tight as we have two-sided tail bounds:
\begin{lemma}\label{LEM: SAMSON}
	For \eqref{eq:AngryParrot}, assume that $\Vert \bS_{i,j} \Vert_{\Linf} \leq 1$ for all $i,j$.
	Then, for every $\delta >0$, there exist constants $c,C>0$ such that for every $x>0$,
	\begin{equation}
		\bbP\Bigl(\min_{\lvert \gamma \rvert \leq \delta}\bigl\lvert  \Vert \bS \Vert  -  (1 + \gamma)\Vert \bS_{\free} \Vert \rvert  > x + C\tilde{\cE}  \Bigr) \leq   \exp\Bigl( - \frac{c x^2}{\Psi(Z)} \Bigr).\label{eq:NewGem}
	\end{equation}
	Here,
	$
		\tilde{\cE} \de  \Psi(Z)^{2/3} d^{1/3}  \ln(d+1)^{2/3} + \Psi(Z) \ln(d+1).
	$
\end{lemma}
The proof amounts to an application of a concentration--of--measure principle by Samson \cite{samson2000concentration} for convex functions of (weakly) dependent scalar random variables.
The latter yields that $\Vert \bS  \Vert - \bbE[\Vert \bS \Vert]$ has sub-Gaussian deviations after which \eqref{eq:NewGem} follows from the expectation bounds in Corollary \ref{cor: MarkovMarkovBound}.
We refer to Appendix \ref{apx: Samson} for the proof details.

\begin{remark}\label{rem: Samson}
	Note that the tail bound in \eqref{eq:NewGem} is sub-Gaussian and dimension-independent.
	In particular, this yields sharper bounds on the upper tails than the one in \eqref{eq: MarkovMarkovBound} that followed immediately from Corollary \ref{cor: MarkovMarkovBound}.
	This reflects a broader principle: the main content in matrix concentration results like ours lies in expectation bounds, not in the immediate tail bounds.
	Once a bound on $\bbE[\Vert \bS \Vert]$ like \eqref{eq:WittyArm} is known, scalar theory for controlling the deviations of $\Vert \bS \Vert - \bbE[\Vert \bS \Vert]$ can often be used to give sharper tail bounds.
	For instance, \cite{samson2000concentration} can also be applied to a more general \emph{matrix series model} with summands of the form $\bX_t = f_t(Z_t)\bB_t$ with $\bB_t$ deterministic matrices.
	Developing analogous results for the general case $\bX_t = \bF_t(Z_t)$ could be relevant future work.
\end{remark}

\subsection{Block Markov chains}\label{sec: ApplicationsBMC}
We next consider a model that is used to study clustering algorithms for sequential data, and whose spectral properties were previously studied using asymptotic and model-specific methods.
Our general-purpose results can here be used to painlessly establish nonasymptotic estimates and to sharpen previous asymptotic results.

Let $d\geq K\geq 1$ be positive integers, consider a partition $(\cV_{i})_{i=1}^K$ of $\{1,\ldots,d \}$ into nonempty subsets, and let $\mathbf{p} \in [0,1]^{K\times K}$ be the transition matrix of an ergodic Markov chain on $\{1,\ldots,K \}$.
Then, the \emph{block Markov chain} \cite{sanders2020clustering} with cluster transition matrix $\mathbf{p}$ and clusters $(\cV_i)_{i=1}^K$ is the Markov chain $(Z_t)_{t=1}^n$ on $\{1,\ldots,d \}$ whose transition probabilities only depend on the states' clusters:
\begin{equation}
	\bbP(Z_{t} = j \mid Z_{t-1} = i)  = \frac{\mathbf{p}_{k,m}}{\#\cV_{m}}\  \text{ for all }\  i\in \cV_{k}\ \text{ and }\  j\in \cV_m. \label{eq: Def_PBMC}
\end{equation}
A schematic depiction of a block Markov chain may be found in Figure \ref{fig: BMC}.

The analysis of spectral clustering algorithms \cite{sanders2020clustering,sanders2023spectral,jedra2023nearly,zhang2020spectral,van2024estimating} which recover the clusters $\cV_i$ based on an observed sample path crucially require concentration estimates for the \emph{sample frequency matrix} $\hat{\bN}$ associated with the sample path defined by
\begin{equation}
	\hat{\bN} \de (\hat{\bN}_{i,j})_{i,j = 1}^d\ \text{ where }\ \hat{\bN}_{i,j} \de \sum_{t=1}^{n-1}\bb1\{Z_{t} = i, Z_{t+1} = j\}.
\end{equation}
Using a model-specific analysis, the asymptotic order of magnitude of $\Vert \hat{\bN} - \bbE[\hat{\bN}] \Vert$ was established in \cite{sanders2021spectral} and the limiting distribution of all singular values was established in \cite{vanwerde2023singular}.
We establish refinements of these results in  Theorem \ref{thm: BMC_Noise_Free} and Proposition \ref{prop: SingValN}.

\begin{figure}[h]
	\centering
	\includegraphics[width = 0.85\textwidth]{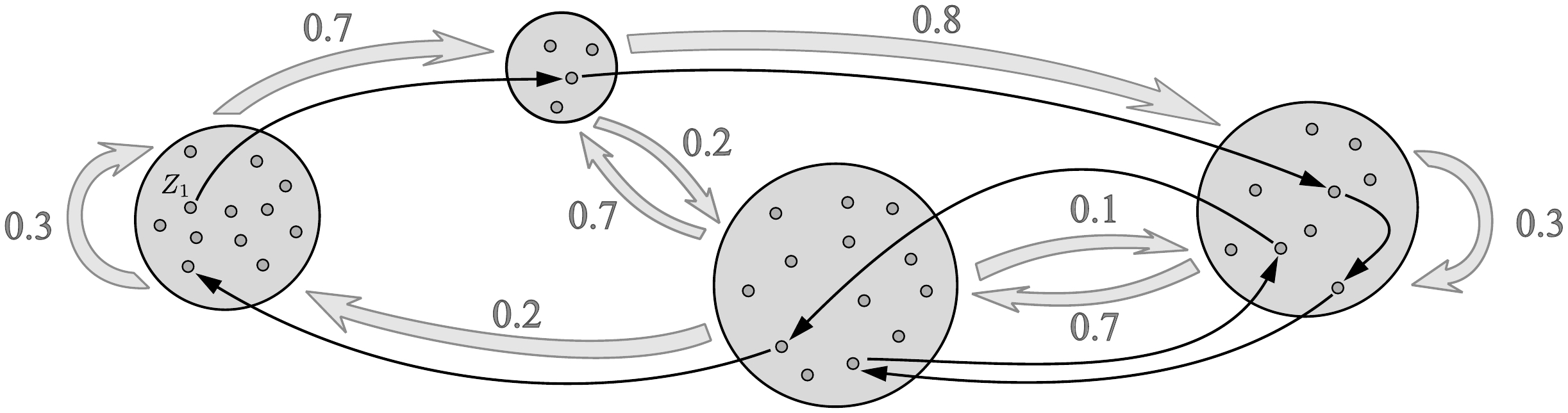}
	\caption{Visualization of a block Markov chain on $K=4$ clusters with cluster transition matrix $\mathbf{p}  =\allowbreak [[0.3,0.7,0,0],\allowbreak [0,0,0.2,0.8],[0.2,0.7,0,0.1],[0,0,0.7,0.3]]$ where we number the clusters from left to right. The thick arrows visualize the transition probabilities $\mathbf{p}_{i,j}$ and the thin arrows visualize the transitions $(Z_{t},Z_{t+1})$ in the observed sample path $(Z_t)_{t=1}^n$. }
	\label{fig: BMC}
\end{figure}

\subsubsection{Concentration and sharp limiting value}
For simplicity, we assume that $(Z_t)_{t=1}^n$ starts in stationarity.
This is to say that $\bbP(Z_1 = i) = \pi_k/\# \cV_k$ for all $i \in \cV_k$ and all $k\in \{1,\ldots,K \}$ where $\pi \in [0,1]^K$ is the stationary distribution associated with $\mathbf{p}$.

Let us denote $\bM \de \sqrt{d/n}(\hat{\bN} - \bbE[\hat{\bN}])$.
Our goal is to estimate $\Vert \bM \Vert_{\op}$.
Since the matrix $\bM$ is not self-adjoint with high probability, we need to consider a preliminary reduction before applying our results.
We define a $2d\times 2d$ matrix, called the \emph{self-adjoint dilation} of $\bM$, by
\begin{equation}
	\bS \de \begin{pmatrix}
		0                & \bM \\
		\bM^{\transpose} & 0
	\end{pmatrix}. \label{eq: Def_S_Dilation}
\end{equation}
Note that $\Vert \bS \Vert_{\op} = \Vert \bM \Vert_{\op}$.
One can represent $\bS$ in terms of a Markovian model associated with the Markov chain of transitions $E \de (E_t)_{t=1}^{n-1}$ defined by $E_{t} \de (Z_{t}, Z_{t+1})$.
Namely, note that with $e_i\in \bbR^d$ the $i$th standard basis vector we can write  $\bS = \sum_{t=1}^{n-1} \bX_{t}$ with
\begin{equation}
	\bX_t \de \sqrt{\frac{d}{n}}\sum_{i,j = 1}^d \bigl(\bb1(E_t = (i,j)) - \bbP(E_t = (i,j))\bigr) \begin{pmatrix}
		0                     & e_{i}e_j^\transpose \\
		e_{j}e_{i}^\transpose & 0
	\end{pmatrix}. \label{eq: Def_Xt}
\end{equation}
Let us write $\hat{\alpha}_i \de \# \cV_i / d$ and $\hat{\alpha}_{\min} \de \min_{i\leq K} \hat{\alpha}_i$.
Further, let $\Psi(\mathbf{p})$ denote the $\psi$-mixing time for the Markov chain on $\{1,\ldots,K \}$ associated with $\mathbf{p}$.
The following lemma then provides an indication of the typical size of the matrix parameters.
\begin{lemma}\label{lem: QualitativeParamEstimates}
	There exist constants $c_1,c_2,c_3>0$ depending only on $\hat{\alpha}_{\min}$ such that
	\begin{align}
		R(\bX) \leq c_1\sqrt{d/n}, \qquad \Psi(E) \leq \Psi(\mathbf{p}) + 1,\qquad                & \varsigma(\bX)^2 \leq \max\{\pi_i/\hat{\alpha}_i: i\leq K\}, \nonumber \\
		\sigma(\bS)^2 \leq  \max\{\pi_v/\hat{\alpha}_i: i\leq K\} + c_2 \Psi(\mathbf{p})/d, \quad & \quad  v(\bS)^2  \leq c_3\Psi(\mathbf{p})/d. \label{eq:HugeHen}
	\end{align}
\end{lemma}
\begin{proof}
	This follows from Lemma \ref{lem: RefinedParamEstimates} which provides more precise estimates.
\end{proof}

Note that the bounds on $\sigma(\bS)^2$ and $\varsigma(\bX)^2$ have the same leading-order contribution in \eqref{eq:HugeHen} because the dependence on $\Psi(\mathbf{p})$ only occurs in a subleading term which is suppressed by a factor $1/d$.
This illustrates another advantage of universality-based concentration results: the variance proxy $\sigma(\bS)^2$ incorporates how the dependence appears in the summed matrix which can be more efficient than worst-case bounds of the form $D\times\varsigma(\bX)^2$ with $D$ a dependence coefficient such as $\Psi(\mathbf{p})$ or quantities based on a spectral gap.

Moreover, using Corollary \ref{cor: MarkovMarkovBound}, it follows that $\Vert \bS \Vert_{\op} \approx \Vert \bS_{\free} \Vert$.
In the asymptotic regime $d\to \infty$, this allows us to refine one of the results in \cite{sanders2023spectral}.
We let $n$ and the clusters $(\cV_{k})_{k=1}^K$ depend on $d$ but assume that the cluster transition matrix $\mathbf{p}$ is kept fixed.
Further, assume that there exist strictly positive numbers $\alpha_1,\ldots,\alpha_k >0$ such that $\lim_{d\to \infty} \# \cV_k /d = \alpha_k$ for every $k$.
Then, in the regime where $n \gg d \ln(d)^4$, the following theorem improves upon \cite[Theorem 3]{sanders2023spectral} in the fact that we can determine the exact limiting value whereas \cite{sanders2023spectral} only proves a nonexplicit upper bound by characterizing the asymptotic order.
Let us note however that \cite{sanders2023spectral} has a weaker assumption: it is there assumed that $n \gg  d \ln(d)$.
\begin{theorem}\label{thm: BMC_Noise_Free}
	Assume that $\lim_{d\to \infty} d\ln(d)^4/n = 0$.
	Then, the random variable $\Vert \bM  \Vert_{\op}$ converges in probability to the scalar $\mathfrak{m}>0$ defined by
	\begin{equation}
		\mathfrak{m} \de \inf_{x \in \bbR_{>0}^{2K}} \max_{i= 1,\ldots,2K} \Bigl\{ \frac{1}{x_i} + \sum_{j=1}^{2K} c_{i,j} x_j \Bigr\} \label{eq: Def_m}
	\end{equation}
	where the infimum runs over all vectors $x$ with strictly positive coordinates and the coefficients $(c_{i,j})_{i,j=1}^{2K}$ are defined by
	\begin{equation}
		c_{i,j} \de \begin{cases}
			0                                               & \text{ if } i \leq K \text{ and }j\leq K, \\
			\alpha_{i}^{-1} \pi_i\mathbf{p}_{i,j - K}       & \text{ if } i\leq K \text{ and }j > K,    \\
			0                                               & \text{ if }  i > K \text{ and }j >K,      \\
			\alpha_{i-K}^{-1}  \pi_{j} \mathbf{p}_{j, i -K} & \text{ if } i >K \text{ and }j \leq K.
		\end{cases}\nonumber
	\end{equation}
\end{theorem}

The proof of this result is given in Section \ref{sec: OpNormBMC} and relies on Corollary \ref{cor: MarkovMarkovBound} to establish an upper bound on $\Vert \bM \Vert$.
Note, however, that we here not only provide an upper bound but rather the exact limiting value.
The proof of this two-sidedness relies on our next result Proposition \ref{prop: SingValN} which implies, in particular, that there is asymptotically an abundance of singular values which are close to the upper bound from Corollary \ref{cor: MarkovMarkovBound}.
This is visualized by Figure \ref{fig: Singvaldensity} in Section \ref{sec: ExperimentalSetup} below.

\subsubsection{Limiting singular value distribution}\label{sec: LimitingSingValue}
Universality of tracial moments can also be used to establish limiting laws for the singular value distribution.
We start by introducing some terminology.
The $i$th largest singular value of a square matrix $\bM \in \bbC^{d\times d}$ can be defined in terms of the $i$th largest eigenvalue of $\bM\bM^*$ as $s_i(\bM)  \de (\lambda_i(\bM \bM^*))^{1/2}$.
The \emph{singular value distribution} of $\bM$ is then the probability measure $\nu_{\bM}$ defined by
\begin{equation}
	\nu_\bM([a,b]) \de \frac{1}{d} \#\bigl\{i \in \{1,\ldots,d \}: a \leq s_i(\bM) \leq b \bigr\}. \label{eq: Def_SingvalMeasure}
\end{equation}
A sequence of random probability measures $\nu_n$ is said to converge \emph{weakly in probability} to a deterministic probability measure $\nu$ if $\int f(x)\intd \nu_n(x)$ converges in probability to $\int f(x) \intd \nu(x)$ for every continuous bounded function $f:\bbR \to \bbR$.

The \emph{Stieltjes transform} of a probability measure $\nu$ on $\bbR$ is the analytic function $s: \bbC^+ \to \bbC^-$ given by $s(z) \de \int 1/(z-x) \intd \nu(x)$ where $\bbC^+ \de \{z \in \bbC: \Im(z) > 0\}$ is the upper half-plane and $\bbC^- \de \{z \in \bbC : \Im(z) < 0 \}$ is the lower half-plane.
The measure can be recovered from its Stieltjes transform using the Stieltjes inversion formula \cite[Theorem B.8]{bai2010spectral} which states that for any continuity points $a < b$ of $\nu$,
\begin{equation}
	\nu([a,b]) = - \frac{1}{\pi} \lim_{\varepsilon \to 0^+} \int_a^b \Im\bigl(s(x + \sqrt{-1}\varepsilon)\bigr).
\end{equation}
Let us warn that \cite{bai2010spectral} employs a definition for the Stieltjes transform which differs from the one used here by a minus sign.
Finally, the \emph{symmetrization} of a measure $\nu$ on $\bbR_{\geq 0}$ is the measure $\sym(\nu)$ on $\bbR$ given by $A \mapsto (\nu(A \cap \bbR_{\geq 0}) +  \nu((-A) \cap \bbR_{\geq 0})/2$ where $A$ ranges over all measurable subsets of $\bbR$ and $-A \de \{-a: a \in A \}$.

The following results improve upon \cite[Theorem 1.1]{vanwerde2023singular}.
In \cite{vanwerde2023singular} it was namely assumed that $n \approx cd^2$ for some constant $c>0$ whereas the following result only requires that $n \gg d$.
This relaxed assumption allows short sample paths which give rise to a sparser sample frequency matrix.
The assumption that $n \gg d$ is optimal, as the result has to fail if $n \approx cd$ for fixed $c>0$.
For instance, suppose that $K=1$ so that the sequence $Z_1,\ldots,Z_n$ consists of independent and identically distributed random variables.
Then, it follows from $n \approx cd$ that there exists some $u>0$ such that the number of states in $\cV$ which are not visited by $Z_t$ is at least $ud$ with high probability.
Then, $\bM$ has at least $ud$ rows equal to zero which implies that $\nu_{\bM}(0) \geq u$.
On the other hand, when $K=1$, the distribution $\sym(\nu_\infty)$ in Proposition \ref{prop: SingValN} is the semicircular law and hence continuous.
Considering $\int f(x)\intd \nu_\bM(x)$ with $f:\bbR \to \bbR$ supported in a small neighborhood of $0$ then shows that $\nu_{\bM}$ does not converge weakly in probability to $\nu_{\infty}$.
\begin{proposition}
	\label{prop: SingValN}
	Assume that $\lim_{d\to \infty} d/n = 0$.
	Then, $\nu_{\bM}$ converges weakly in probability to a compactly supported probability measure $\nu_{\infty}$ on $\bbR_{\geq 0}$.
	Moreover, $\sym(\nu_{\infty})$ has Stieltjes transform $s(z) = \sum_{i=1}^{K} \alpha_i (a_i(z)\allowbreak + a_{K + i}(z))/2$ where $a_1,\ldots,a_{2K}$ are the unique analytic functions from $\mathbb{C}^+$ to $\mathbb{C}^-$ such that the following system of equations is satisfied
	\begin{align}
		a_i(z)^{-1}     & = z -\sum_{j=1}^{K}  \alpha_i^{-1} \pi_i \mathbf{p}_{i,j} a_{K+j}(z) \nonumber \\
		a_{i+K}(z)^{-1} & = z - \sum_{j=1}^{K}  \alpha_i^{-1} \pi_j \mathbf{p}_{j,i} a_{j}(z) \nonumber
	\end{align}
	for $i = 1,\ldots,K$.
\end{proposition}
\begin{corollary}\label{cor: SingValNN}
	Assume that $\lim_{d\to \infty} d/n = 0$ and let $\nu_{\infty}$ be as in Proposition \ref{prop: SingValN}.
	Then, $\nu_{\sqrt{d/n}\hat{\bN}}$ converges weakly in probability to $\nu_{\infty}$.
\end{corollary}
The proof is given in Section \ref{sec: SingvalDistribution} and relies on the moment universality from Theorem \ref{thm: MainTracial}.
It is interesting to note that universality gives a simpler proof than in \cite{vanwerde2023singular}.
The main difficulty in \cite{vanwerde2023singular} is namely to show that all joint moments of the entries of $\bM$ behave approximately as in the independent case.
For the second moments, \ie covariance, this is not too difficult \cite[Proposition 4.8]{vanwerde2023singular}.
Estimating higher-order joint moments is however significantly more technical; see \cite[Section 6.3.4]{vanwerde2023singular}.
A univerality-based approach allows bypassing this technical step because the higher moments of a Gaussian are determined by its covariance.

\begin{remark}\label{rem: TracialTricks}
	The quantitative bounds in Theorem \ref{thm: MainTracial} are only for tracial moments of even order.
	This is sufficient for singular value distributions, but other applications such as eigenvalue distributions of self-adjoint random matrices also requires tracial moments of odd order.
	In this context, note that
	$
		\bbE[\tr (\bS + t\b1)^{2p}] = \sum_{k=0}^{2p} \binom{2p}{k}t^{2p-k} \bbE[\tr \bS^{k}]
	$.
	Since pointwise convergence of a polynomial implies convergence of coefficients, this also allows extracting asymptotic universality for odd tracial moments from Theorem \ref{thm: MainTracial}.
	A similar trick applied to $(\bS \otimes \b1 + t \b1 \otimes \bS)^{2p}$ yields universality for the variance of tracial moments; see the proof of Lemma \ref{lem: BMC_tracialmoment} in Section \ref{sec: ProofBMC} for details.
\end{remark}

\subsubsection{Visualization of results}\label{sec: ExperimentalSetup}

Figure \ref{fig: Singvaldensity} visualizes Theorem \ref{thm: BMC_Noise_Free} and Proposition \ref{prop: SingValN}.
Observe that the edge of the support of the empirical singular value distribution is visually indistinguishable from $\Vert \bS_{\free} \Vert$.
This illustrates the sharpness of the leading-order term: recall that the greatest singular value of a matrix corresponds to its operator norm.
Such sharp leading-order terms are inaccessible with previous general-purpose matrix concentration results with dependencies.

The experimental setup for this figure corresponds to the block Markov chain visualized in Figure \ref{fig: BMC}.
More precisely, we sampled a trajectory from a block Markov chain with $K=4$, $\mathbf{p}  =\allowbreak [[0.3,0.7,0,0],\allowbreak [0,0,0.2,0.8],[0.2,0.7,0,0.1],[0,0,0.7,0.3]]$, $(\alpha_1,\alpha_2, \alpha_3, \alpha_4) = (0.2,0.1,0.4,0.3)$, $d = 10 000$, and $n = 100d$.
The system of equations in Proposition \ref{prop: SingValN} was solved using the algorithm of \cite{helton2007operator} as implemented in \texttt{BMCToolkit} \cite{vanwerde2022detection}.

\begin{figure}[h]
	\centering
	\includegraphics[width = 0.95\textwidth]{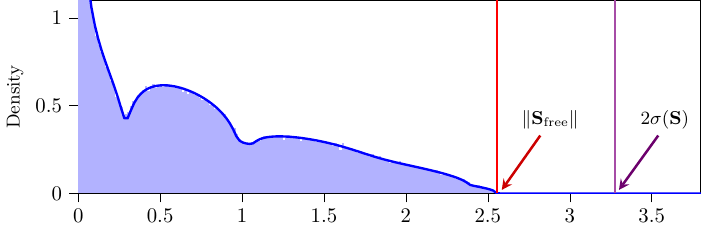}
	\caption{
		The bars display the empirical singular value distribution of the recentered and normalized sample frequency matrix $\bM$ for a sample path of a block Markov chain.
		The blue line displays the asymptotic theoretical prediction from Proposition \ref{prop: SingValN}.
		The red and purple vertical lines respectively display $\Vert \bS_{\free} \Vert$ and $2\sigma(\bS)$ with $\bS$ as in \eqref{eq: Def_S_Dilation}.
	}
	\label{fig: Singvaldensity}
\end{figure}

\section{Proof of \texorpdfstring{Theorem \ref{thm: MainTracial}}{Theorem}}\label{sec: MainUniversalityProof}
Recall that Theorem \ref{thm: MainTracial} is a universality principle stating that the tracial moments of $\bS$ are well-approximated by those of $\bG$.
To establish this universality statement, we interpolate using
\begin{equation}
	\bS(t)\de \bbE [\bS] + \sqrt{t} (\bS - \bbE[\bS]) + \sqrt{1 -t} (\bG - \bbE[\bG])\label{eq: Def_S(t)}
\end{equation}
for $t\in [0,1]$.
The reason for using weights $\sqrt{t}$ and $\sqrt{1-t}$ in the interpolation is to ensure that the covariance of the entries of $\bS(t)$ remains constant as $t$ varies.
In order to prove that $\bbE[\tr \bS^{2p}]^{1/2p} \approx \bbE[\tr\bG^{2p}]^{1/2p}$, it is now sufficient to show that $\lvert\dd{t}\bbE[\tr \bS(t)^{2p}]\rvert$ is small.

Section \ref{sec: CombinePsiBool} establishes an exact expansion for $\dd{t}\bbE[g(\bS(t))]$ where $g:\bbC^{d\times d} \to \bbC$ is a polynomial map.
This involves combinatorics associated with classical--to--Boolean cumulant relations from \cite{arizmendi2015relations} whose contribution we estimate in Section \ref{sec: DeltasPsi}.
Further, the individual terms in the expansion are defined in terms of directional derivatives of $g$.
We estimate these individual terms using trace inequalities in Section \ref{sec: DerivativesTracial}.
These ingredients are finally combined to bound $\dd{t}\bbE[\tr\bS(t)^{2p}]$ in Section \ref{sec: UniversalityEvenOrderMarkov}, concluding the proof.

\subsection{Expansion for the rate--of--change along the interpolation}\label{sec: CombinePsiBool}

\subsubsection{Boolean joint cumulants}\label{sec: BooleanJointCumulants}
The \emph{Boolean joint cumulant} $b(Y_{1},\ldots,Y_{k})$ of a sequence of bounded real random variables $Y_{1},\ldots,Y_{k}$ is defined in terms of the joint moments as
\begin{equation}
	b(Y_1,\ldots,Y_k)\label{eq: Def_Boolean} \de \sum_{m=0}^{k-1}\sum_{1\leq j_1 < \cdots < j_m \leq k-1 }\negsp \negsp (-1)^{m}\bbE[Y_1 \cdots Y_{j_1}]\bbE[Y_{j_1 +1} \cdots Y_{j_2}]\cdots\bbE[Y_{j_{m} +1} \cdots Y_{k}].
\end{equation}
Let us warn that Boolean cumulants are \emph{not} permutation invariant.
That is, it can occur that $b(Y_1,\ldots,Y_k) \neq b(Y_{\rho(1)},\ldots,Y_{\rho(k)})$ for some permutation $\rho \in \cS_k$.
This warning is relevant in some of the subsequent proofs where we have to ensure that the random variables occur in the appropriate order to be able to exploit the decay of dependence in the underlying Markovian sequence $(Z_1,\ldots,Z_n)$.

For any sequence of real values $i = (i_1,\ldots,i_k)$ let us denote $\sort(i)$ for the unique sequence of length $k$ which is a nondecreasing permutation of $i$:
\begin{equation}
	\min\{i_1,\ldots,i_k \} = \sort(i)_1 \leq \sort(i)_2 \leq \cdots \leq \sort(i)_k = \max\{i_1,\ldots,i_k\}.\label{eq: Def_sort}
\end{equation}
For a nonempty subset $\cJ\subseteq \{1,\ldots,k \}$ and $1 \leq \ell\leq \# \cJ$ we denote $\cJ(\ell) \de \sort((j)_{j\in \cJ})_\ell$ for the $\ell$th smallest element in $\cJ$.
We then denote $b(Y_j: j\in \cJ)$ for the Boolean cumulant associated to $\cJ$ with indices in increasing order:
\begin{equation}
	b(Y_j: j\in \cJ) \de b(Y_{\cJ(1)},Y_{\cJ(2)},\ldots,Y_{\cJ(k)}). \label{eq: bYJ}
\end{equation}
Denote $\cS_k$ and $\cP_k$ for the sets consisting of all permutations or partitions of $\{1,\ldots,k\}$, respectively.
An \emph{(increasing) run} in a permutation $\rho \in \cS_k$ is an increasing subsegment of $(\rho(1),\ldots,\rho(k))$ of maximal length.
Here, a \emph{subsegment} is a sequence of the form $(\rho(i), \rho(i+1),\rho(i+2), ...,\rho(i+\ell))$.
We denote $\pi_\rho\in \cP_k$ for the partition of $\{1,\ldots,k \}$ consisting of the runs.
That is, for $i <j$ it holds that $i\sim j$ in the equivalence relation induced by $\pi_\rho$ if and only if $i = \rho(r)< \rho(r+1) <\ldots < \rho(r + \ell) =j$ for some $r,\ell$.
See Figure \ref{fig: runs} for a visualization of a permutation $\rho$ and the associated partition $\pi_{\rho}$.

\begin{figure}[h]
	\centering
	\includegraphics[width = 0.92\textwidth]{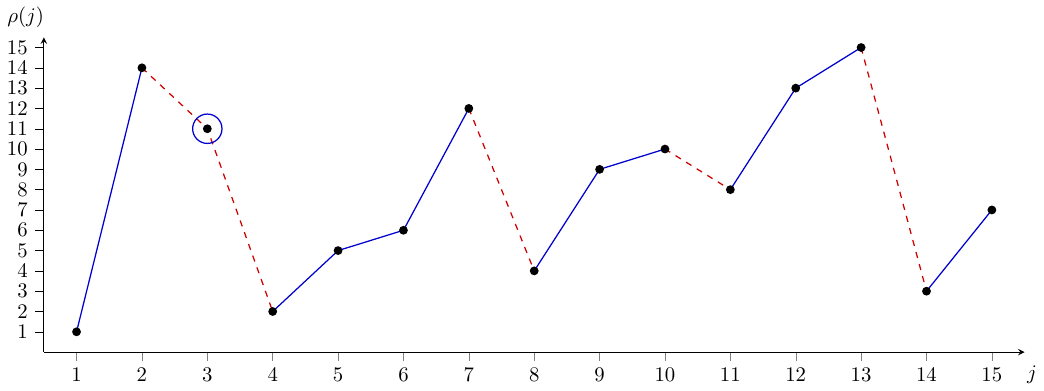}
	\caption{This figure displays a permutation $\rho \in \cS_{15}$.
		Every part in the induced partition of runs $\pi_\rho$ here corresponds to a connected component of the solid blue lines by way of the $y$-values in this connected component.
		The displayed permutation is given by $(\rho(1),\rho(2),\ldots, \rho(15)) = (1,14,11,2,5,6,12,4,9,10,8,13,15,3,7)$.
		The induced partition of runs is given by $\pi_\rho=\{\{1,14\},\{11\},\{2,5,6,12\},\{4,9,10\},\{8,13,15 \},\{3,7\}\}$. }
	\label{fig: runs}
\end{figure}

\begin{proposition}\label{prop: BooleanInterpolation}
	Let $V_1,\ldots,V_n$ be a sequence of, possibly dependent, centered and bounded $\bbR^D$-valued random vectors and consider a sequence of centered Gaussian random vectors satisfying that $\bbE[G_{i_1}G_{i_2}^{\transpose}]  = \bbE[V_{i_1}V_{i_2}^{\transpose}]$ for any $i_1,i_2\in \{1,\ldots,n \}$.
	Assume that $\bV \de (V_1,\ldots,V_n)^\transpose$ and $\bG \de (G_1,\ldots,G_n)^\transpose$ are independent and let $
		\bV(t) \de \sqrt{t}\, \bV + \sqrt{1 - t}\, \bG.
	$

	Then, for any polynomial $g:\bbR^{n\times D} \to \bbC$ and any $t\in [0,1]$
	\begin{align*}
		\dd{t}\bbE\bigl[g\bigl(\bV(t)\bigr)\bigr] & = \frac{1}{2}\sum_{k=3}^\infty\frac{t^{\frac{k}{2}-1}}{(k-1)!} \sum_{i\in \{1,\ldots,n \}^k}\sum_{\rho \in \cS_k:\rho(1) = 1}(-1)^{\#\pi_\rho- 1}\sum_{\gamma \in \{1,\ldots,D \}^k}                      \\
		                                          & \times\Bigl\{\prod_{\cJ \in \pi_\rho} b(V_{\sort(i)_j,\gamma_j}: j\in \cJ) \Bigr\}\bbE\Bigl[\frac{\partial^k g}{\partial v_{\sort(i)_1,\gamma_1} \cdots \partial v_{\sort(i)_k,\gamma_k}}(\bV(t)) \Bigr].
	\end{align*}
\end{proposition}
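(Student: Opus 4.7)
The plan is to deduce the stated expansion from two ingredients: the classical-cumulant Gaussian interpolation formula of \cite[Theorem 3.3]{brailovskaya2022universality} and the classical-to-Boolean cumulant relation of Arizmendi--Hasebe--Lehner--Vargas \cite{arizmendi2015relations}. Because $g$ is a polynomial, all mixed partial derivatives of order larger than $\deg(g)$ vanish, so the ostensibly infinite series is actually a finite sum and no convergence concerns arise.

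First, I would concatenate $V_1, \ldots, V_n$ into a single centered bounded $\bbR^{nD}$-valued random vector and pair it with its Gaussian counterpart $(G_1,\ldots,G_n)$. Applying \cite[Theorem 3.3]{brailovskaya2022universality} to this setup yields the classical cumulant expansion
\begin{align*}
\dd{t}\bbE[g(\bV(t))] = \frac{1}{2} \sum_{k=3}^\infty \frac{t^{k/2-1}}{(k-1)!} \sum_{i,\gamma} \kappa(V_{i_1,\gamma_1},\ldots,V_{i_k,\gamma_k})\, \bbE\Bigl[\frac{\partial^k g}{\partial v_{i_1,\gamma_1}\cdots \partial v_{i_k,\gamma_k}}(\bV(t))\Bigr],
\end{align*}
where $i \in \{1,\ldots,n\}^k$ and $\gamma \in \{1,\ldots,D\}^k$; the sum starts at $k=3$ since the matching covariance assumption causes the $k=2$ contributions to cancel exactly.

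Next, I symmetrize to sort the index tuple. Classical joint cumulants are symmetric in their arguments and mixed partials of the polynomial $g$ commute, so each summand depends only on the multiset $\{(i_j,\gamma_j)\}_{j=1}^k$. For each $i$, fix any permutation $\tau_i \in \cS_k$ with $i_{\tau_i(j)} = \sort(i)_j$. Using this symmetry to reorder the summand and then the bijective change of variables $\gamma'_j \de \gamma_{\tau_i(j)}$ on the inner sum over $\gamma$, one replaces $i_j$ by $\sort(i)_j$ throughout without altering the double sum. The classical-to-Boolean identity of \cite{arizmendi2015relations}, which for bounded real $Y_1,\ldots,Y_k$ asserts that
\begin{align*}
\kappa(Y_1,\ldots,Y_k) = \sum_{\rho \in \cS_k:\rho(1)=1} (-1)^{\#\pi_\rho - 1} \prod_{J \in \pi_\rho} b(Y_j : j\in J),
\end{align*}
now applies to $\kappa(V_{\sort(i)_1,\gamma_1},\ldots,V_{\sort(i)_k,\gamma_k})$; substituting into the sorted expansion delivers the stated formula.

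The main obstacle is identifying and correctly applying this particular permutation-indexed form of the classical-to-Boolean relation. The framework of \cite{arizmendi2015relations} gives several equivalent M\"obius-style transformations among classical, free, Boolean and monotone cumulants, and a bit of combinatorial bookkeeping with runs partitions is required to reach the identity in precisely the displayed form; it may be verified by direct computation from the moment--cumulant relations for small $k$ as a useful sanity check. A minor subtlety arises in the symmetrization step when the tuple $i$ has repeated entries, but any deterministic rule for choosing $\tau_i$ (such as the lexicographically smallest) resolves it without affecting the sum.
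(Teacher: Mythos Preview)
Your proposal is correct and follows essentially the same route as the paper: apply \cite[Theorem 3.3]{brailovskaya2022universality} to the concatenated $nD$-dimensional vector, use permutation invariance of classical cumulants together with commutativity of partial derivatives (and a relabeling of the $\gamma$-sum) to replace $i$ by $\sort(i)$, and then invoke the classical-to-Boolean identity \cite[Corollary 1.6]{arizmendi2015relations}. Your handling of the sorting step via an explicit choice of $\tau_i$ and the change of variables $\gamma' = \gamma\circ\tau_i$ is exactly the mechanism behind the paper's one-line justification.
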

\begin{proof}
	We will use an expansion in terms of classical cumulants from \cite[Theorem 4.3]{brailovskaya2022universality} as our starting point and rephrase by using a classical--to--Boolean formula from \cite[Corollary 1.6]{arizmendi2015relations}.
	In this context, let us note that the classical cumulant of a sequence of bounded random variables $Y_1,\ldots,Y_k$ is defined by
	\begin{equation}
		\kappa(Y_{1},\ldots,Y_{k}) \de \sum_{\pi \in \cP_k}  (-1)^{\# \pi -1}(\# \pi - 1)!  \prod_{\cJ\in \pi} \bbE\Bigl[\prod_{j\in \cJ} Y_{j}\Bigr].\label{eq: Def_Cumulant}
	\end{equation}
	Note that it is immediate from \eqref{eq: Def_Cumulant} that classical cumulants are permutation invariant, meaning that $\kappa(Y_1,\ldots,Y_k) = \kappa(Y_{\rho(1)},\ldots,Y_{\rho(k)})$ for all $\rho \in \cS_k$.

	We can also view $\bV(t)$ as an $nD$-dimensional random vector whose entries are indexed by tuples $(i,\gamma)\in \{1,\ldots,n \} \times \{1,\ldots,D \}$.
	Hence, by the special case of the classical cumulant expansion \cite[Theorem 4.3]{brailovskaya2022universality} for a \emph{single} $nD$-dimensional random vector,
	\begin{align}
		 & \dd{t} \bbE\bigl[g\bigl(\bV(t)\bigr)\bigr]\label{eq: BVH_cumulantexpansion}                                                                                                                                                                                                                                            \\
		 & = \frac{1}{2}\sum_{k=3}^\infty \frac{t^{\frac{k}{2}-1}}{(k-1)!} \sum_{i\in \{1,\ldots,n \}^k} \sum_{\gamma\in \{1,\ldots,D \}^k}\negsp \negsp \negsp\kappa(V_{i_1,\gamma_1},\ldots,V_{i_k,\gamma_k})\bbE\Bigl[\frac{\partial^k g}{\partial v_{i_1, \gamma_1} \cdots \partial v_{i_k,\gamma_k}}(\bV(t))\Bigr] \nonumber
	\end{align}
	for any $t\in [0,1]$.
	Hence, since $\sort(i)$ is a permutation of $i$ and since classical cumulants are permutation invariant,
	\begin{align}
		 & \sum_{\gamma\in \{1,\ldots,D \}^k} \kappa(V_{i_1,\gamma_1},\ldots,V_{i_k,\gamma_k})\bbE\Bigl[\frac{\partial^k g}{\partial v_{i_1, \gamma_1} \cdots \partial v_{i_k,\gamma_k}}(\bV(t))\Bigr]                                           \\
		 & \ = \sum_{\gamma\in \{1,\ldots,D \}^k} \kappa(V_{\sort(i)_1,\gamma_1},\ldots,V_{\sort(i)_k,\gamma_k})\bbE\Bigl[\frac{\partial^k g}{\partial v_{\sort(i)_1, \gamma_1} \cdots \partial v_{\sort(i)_k,\gamma_k}}(\bV(t))\Bigr].\nonumber
	\end{align}
	We next apply the classical--to--Boolean formula from \cite[Corollary 1.6]{arizmendi2015relations}.

	This formula involves quantities $B_{\operatorname{runs}(\rho)}$ and $d(\rho)$ where $\rho$ is a permutation.
	In our notation, it holds that $\operatorname{runs}(\rho) = \pi_{\rho}$ and $d(\rho) = \# \pi_{\rho}-1$ due to \cite[Section 7, page 79, item (1)]{arizmendi2015relations}.
	Further, for any partition $\pi$, the quantity $B_{\pi}$ is defined in terms of a product of Boolean joint cumulants in \cite[Section 2, page 62]{arizmendi2015relations}.
	Let us finally remark that our definitions \eqref{eq: Def_Cumulant} and \eqref{eq: Def_Boolean} for classical and Boolean joint cumulants agree with the definitions used in \cite{arizmendi2015relations}: take $\pi = \{\{1,\ldots,k \}\}$ and $\varphi = \bbE$ in \cite[(2.10) \& (2.12)]{arizmendi2015relations}.
	Hence, in our notation, \cite[Corollary 1.6]{arizmendi2015relations} states that
	\begin{equation}
		\kappa(V_{\sort(i)_1,\gamma_1},\ldots,V_{\sort(i)_k,\gamma_k}) = \sum_{\rho \in \cS_k: \rho(1) = 1} (-1)^{\#\pi_\rho-1}  \prod_{\cJ\in \pi_\rho} b(V_{\sort(i)_j,\gamma_j} : j\in \cJ).  \label{eq: ClassicalToBool}
	\end{equation}
	Combine \eqref{eq: BVH_cumulantexpansion}--\eqref{eq: ClassicalToBool} to find the desired result.
\end{proof}

The reason why Boolean cumulants are useful in our Markovian setting is due to the following expression.
This identity can also be found in \cite[(1.62)]{saulis1991limit}.
\begin{proposition}\label{prop: BooleanMarkov}
	Consider a Markovian sequence of random variables $W_1,\ldots,W_k$ with values in standard Borel spaces $\cW_1,\ldots,\cW_k$ as well as measurable functions $g_i:\cW_i \to \bbR$.
	Then, denoting $Y_i =g_i(W_i)$ for any $i\in \{1,\ldots,k\}$,
	\begin{equation}
		b(Y_1,\ldots,Y_k)\label{eq: Prop_BooleanMarkov} = \negsp \int_{\cW_1}\negsp \negsp \cdots \negsp\int_{\cW_k}\negsp \Bigl\{ \prod_{i=2}^k g_i(w_i) \bigl(\intd\bbP_{W_i \mid W_{i-1}=w_{i-1}}(w_{i}) - \intd\bbP_{W_i}(w_i) \bigr)  \Bigr\} g_1(w_1) \intd\bbP_{W_1}(w_1).
	\end{equation}
\end{proposition}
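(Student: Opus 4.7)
The plan is to verify the identity by expanding the signed product on the right-hand side and showing that each resulting term factors into joint expectations of consecutive blocks, which then match the summation defining the Boolean cumulant in \cref{eq: Def_Boolean}.

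First, I would expand
\begin{align*}
    \prod_{i=2}^k \bigl(\intd\bbP_{W_i \mid W_{i-1}=w_{i-1}}(w_i) - \intd\bbP_{W_i}(w_i)\bigr) = \sum_{S \subseteq \{2,\ldots,k\}} (-1)^{\#S} \prod_{i \in S} \intd\bbP_{W_i}(w_i) \prod_{i \notin S, i\geq 2} \intd\bbP_{W_i \mid W_{i-1}=w_{i-1}}(w_i),
\end{align*}
so that the right-hand side of \cref{eq: Prop_BooleanMarkov} becomes a signed sum of $2^{k-1}$ integrals indexed by subsets $S\subseteq \{2,\ldots,k\}$.

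Second, I would fix $S = \{j_1 < \cdots < j_m\}$ and establish that the associated integral factors. Set the convention $j_0 = 1$ and $j_{m+1} = k+1$, and partition the indices into blocks $B_\ell \de \{j_\ell, j_\ell + 1,\ldots, j_{\ell+1}-1\}$ for $\ell = 0,1,\ldots,m$. The mixed measure restricted to block $B_\ell$ (for $\ell\geq 1$) is $\intd\bbP_{W_{j_\ell}}(w_{j_\ell}) \prod_{i=j_\ell+1}^{j_{\ell+1}-1} \intd\bbP_{W_i\mid W_{i-1}=w_{i-1}}(w_i)$, which by the Markov property is precisely the joint law of $(W_{j_\ell},\ldots,W_{j_{\ell+1}-1})$. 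Integrating $\prod_{i\in B_\ell} g_i(w_i)$ against this measure hence yields $\bbE[Y_{j_\ell}\cdots Y_{j_{\ell+1}-1}]$. An analogous argument applied to the initial block $B_0$ (whose measure starts from $\intd\bbP_{W_1}$) gives $\bbE[Y_1\cdots Y_{j_1-1}]$. Crucially, consecutive blocks decouple because each block begins with a marginal $\intd\bbP_{W_{j_\ell}}$ that does not depend on any $w_i$ with $i<j_\ell$, so an iterated Fubini argument allows the whole integral to be evaluated block by block as $\bbE[Y_1\cdots Y_{j_1-1}]\bbE[Y_{j_1}\cdots Y_{j_2-1}]\cdots\bbE[Y_{j_m}\cdots Y_k]$.

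Third, I would reindex using $i_\ell \de j_\ell - 1$, so that $1\leq i_1 < \cdots < i_m \leq k-1$, and the sum over $S$ becomes exactly $\sum_{m=0}^{k-1}\sum_{1\leq i_1<\cdots<i_m\leq k-1}(-1)^m\bbE[Y_1\cdots Y_{i_1}]\bbE[Y_{i_1+1}\cdots Y_{i_2}]\cdots\bbE[Y_{i_m+1}\cdots Y_k]$, which matches \cref{eq: Def_Boolean}.

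The main obstacle is the block decoupling argument in the second step: one must verify that inserting $\intd\bbP_{W_{j_\ell}}$ at index $j_\ell$ genuinely ``resets'' the chain so that the block $B_\ell$ becomes independent of $B_0,\ldots,B_{\ell-1}$ under the mixed measure. This is straightforward once phrased as a Fubini computation, since the measure on $B_\ell$ is a function only of $w_{j_\ell},\ldots,w_{j_{\ell+1}-1}$ (the kernels $\intd\bbP_{W_i\mid W_{i-1}=w_{i-1}}$ within $B_\ell$ only refer to variables indexed inside $B_\ell$, because $j_\ell$ is the smallest such index and its measure is the marginal rather than a conditional). Once this is in place, matching the combinatorics to \cref{eq: Def_Boolean} is purely notational.
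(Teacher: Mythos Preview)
Your proposal is correct and takes essentially the same approach as the paper: expand the product into $2^{k-1}$ terms indexed by subsets of $\{2,\ldots,k\}$, observe via the Markov property that each term factors into a product of block expectations, and match these to the terms in \cref{eq: Def_Boolean}. The paper's own proof is a one-sentence version of exactly this argument (illustrated with the $k=3$ example), whereas you have spelled out the block-decoupling and reindexing steps in more detail.
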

\begin{proof}
	Expand the right-hand-side of \eqref{eq: Prop_BooleanMarkov} into a sum with $2^{k-1}$ terms and note that each of these terms corresponds to one of the terms in \eqref{eq: Def_Boolean}.
	For instance the term $(-1)\int_{\cW_1} \int_{\cW_2} \int_{\cW_3} g_1(w_1)g_2(w_2)g_3(w_3)\intd\bbP_{W_3}(w_3) \intd \bbP_{W_2 \mid W_1 = w_1}(w_2)\intd \bbP_{W_1}(w_1)$ in the expansion of \eqref{eq: Prop_BooleanMarkov} when $k=3$ corresponds to the term $(-1) \bbE[Y_1Y_2]\bbE[Y_3]$ in \eqref{eq: Def_Boolean}.
\end{proof}

\subsubsection{Properties of the \texorpdfstring{$\psi$}{psi}-dependence coefficient}\label{sec: psiprops}
Note the factors with $i\geq 2$ in the Boolean cumulant expression \eqref{eq: Prop_BooleanMarkov} provide a suppression when $\bbP_{W_i \mid W_{i-1}=w_{i-1}} \allowbreak\approx \bbP_{W_i}$.
That is, the Boolean cumulant tends to be small if the Markovian sequence is almost a sequence of independent random variables.
We will exploit this fact together with our assumption that the $\psi$-dependence in the Markov chain is decaying by using the following properties:
\begin{proposition}\label{prop: PsiRadonNikodym}
	Assume that $V, W$ are random variables taking values in standard Borel spaces such that $\psi(V , W) < \infty$.
	Then, $\bbP_{V,W}$ is absolutely continuous with respect to $\bbP_{V}\otimes \bbP_{W}$, and for $(\bbP_{V}\otimes \bbP_W)$-almost every $(v,w)$,
	\begin{equation}
		1 - \psi(V , W)\leq \frac{\intd \bbP_{V,W}}{\intd (\bbP_V \otimes \bbP_W)}(v,w) \leq 1 + \psi(V , W). \nonumber
	\end{equation}
\end{proposition}

\begin{proposition}\label{prop: ExponentialPsi}
	Let $Z$ be a Markovian sequence of random variables.
	Then, for any $i\in \{1,\ldots,n \}$ and $j \geq \Psi(Z)$,
	\begin{equation}
		\psi(Z_{i+j} , Z_i) \leq  \Bigl(\frac{1}{4}\Bigr)^{\lfloor j/\Psi(Z) \rfloor}. \nonumber
	\end{equation}
\end{proposition}
Both properties follow from the definition \eqref{eq: Def_PsiDependence} by direct computations using the Radon--Nikodym theorem and the Markov property.
This is classical, dating back to the 1963 work of Blum, Hanson, and Koopmans \cite[Pages 8--10]{blum1963strong}, so we omit the details.\footnote{Detailed computations can also be found in the first arXiv version of this paper; see arXiv:2307.11632v1.}

\subsubsection{Encoding suppression in random variables}

If we were interested in results for real-valued random variables, then we could now exploit our assumptions regarding decay of dependence by applying H\"older's inequality to \eqref{eq: Prop_BooleanMarkov}; see \eg \cite[Chapter 4]{saulis1991limit} and \cite[Section 10]{feray2018weighted} for such arguments.
Our primary interest is however in random matrices, and we only pass by the real-valued random variables as an intermediate stage.
This makes it so that we have to postpone the application of estimates.
To this end, we will encode the decay of dependence in scalar-valued random variable which allows us to maintain exact equalities, leading to a practical expansion for the rate of change along the interpolation in Proposition \ref{prop: MarkovNewVariables}.
\begin{proposition}\label{prop: BookkeepingLambdaPsi}
	Let $W_1,\ldots,W_k$ be a Markovian sequence of random variables taking values in standard Borel spaces $\cW_1,\ldots,\cW_k$, respectively.
	Then, there exist random variables $W_1',\ldots,W_k'$ and $\Delta'$ with the following three properties:
	\begin{description}
		\item[\textbf{(1) Marginal distribution:}] \label{item: BookkeepingLambdaPsi_Distribution} For any fixed $j\leq k$, it holds that $W_j'$ has the same law as $W_{j}$.
		\item[\textbf{(2) Suppression:}] \label{item: BookkeepingLambdaPsi_Lambda} The random variable $\Delta'$ takes values in $\bbR$ and satisfies that almost surely
		      \begin{equation}
			      \lvert \Delta' \rvert \leq 2^{k-1} \prod_{j=2}^k \min\Bigl\{1, \psi(W_{j} , W_{j - 1})\Bigr\}. \nonumber
		      \end{equation}
		      When $k = 1$, this bound should be understood as the statement that $\lvert \Delta' \rvert \leq 1$.
		\item[\textbf{(3) Expression for Boolean cumulants:}] \label{item: BookkeepingLambdaPsi_Expansion}For any sequence of real random variables $Y_1,\ldots,\allowbreak Y_k$ associated to the Markovian sequence, \ie with $Y_i = g_i(W_i)$ for certain $g_i:\cW_i\to \bbR$,
		      \begin{equation}
			      b(Y_1,\ldots,Y_k) =  \bbE[ \Delta' Y_{1}' \cdots Y_{k}'], \nonumber
		      \end{equation}
		      where $Y_{i}' \de g_i(W_{i}')$.
	\end{description}
\end{proposition}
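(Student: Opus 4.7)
My approach is to combine the Boolean cumulant expression from \Cref{prop: BooleanMarkov} with a convex-combination decomposition of the one-step Markov transitions that isolates the $\psi$-suppression. The key ingredient I would establish first is that, for each $j \geq 2$, the conditional law admits the decomposition
\begin{align}
    \bbP_{W_j \mid W_{j-1} = w_{j-1}} = (1 - \beta_j)\bbP_{W_j} + \beta_j\, \bbQ_j(w_{j-1}, \cdot),\nonumber
\end{align}
where $\beta_j \de \min\{1, \psi(W_j \mid W_{j-1})\}$ and $\bbQ_j(w_{j-1}, \cdot)$ is a probability kernel. When $\beta_j = 1$ one simply takes $\bbQ_j$ to be a regular conditional distribution, which exists by the standard Borel assumption regardless of whether $\psi(W_j \mid W_{j-1})$ is finite. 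When $\beta_j < 1$ we necessarily have $\psi(W_j \mid W_{j-1}) < 1$, so by \Cref{prop: PsiRadonNikodym} the Radon--Nikodym density of $\bbP_{W_j \mid W_{j-1} = w_{j-1}}$ with respect to $\bbP_{W_j}$ is bounded below by $1 - \beta_j$, and rescaling the excess mass produces the required probability kernel $\bbQ_j$.

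The remainder of the proof is bookkeeping. Subtracting $\bbP_{W_j}$ from the decomposition yields $d\bbP_{W_j \mid W_{j-1} = w_{j-1}} - d\bbP_{W_j} = \beta_j(d\bbQ_j(w_{j-1}, \cdot) - d\bbP_{W_j})$. Substituting this into the iterated integral from \Cref{prop: BooleanMarkov} and expanding the product over $j = 2, \ldots, k$ produces $2^{k-1}$ signed terms of the form $(-1)^{\sum_{j \geq 2}\alpha_j}\prod_{j=2}^k \beta_j \cdot \bbE[\prod_{j=1}^k g_j(\Wc{\alpha}_j)]$, where for each choice of $(\alpha_2,\ldots,\alpha_k)$ the sequence $\Wc{\alpha}_1, \ldots, \Wc{\alpha}_k$ is the Markov chain with $\Wc{\alpha}_1 \sim \bbP_{W_1}$ and transition kernel $\bbQ_j$ when $\alpha_j = 0$ and $\bbP_{W_j}$ when $\alpha_j = 1$. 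The averaging identity $\int \bbQ_j(w_{j-1}, \cdot)\,d\bbP_{W_{j-1}}(w_{j-1}) = \bbP_{W_j}$, which is an immediate consequence of integrating the convex decomposition above against $\bbP_{W_{j-1}}$, implies by induction on $j$ that each $\Wc{\alpha}_j$ has marginal law $\bbP_{W_j}$, verifying property~(1). To reach the claimed $2^k$ indices and secure the factor $\tfrac{1}{2}$ in the bound, I would let $\alpha_1$ be a dummy bit (the chain does not depend on it) and set $\Delc{\alpha}$ to be the deterministic constant $\tfrac{1}{2}(-1)^{\sum_{j=2}^k \alpha_j}\prod_{j=2}^k \beta_j$; summing over $\alpha_1$ then doubles each contribution and recovers the Boolean cumulant identity in property~(3), while the bound in property~(2) is immediate from the definition.

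I expect the most delicate part to be the rigorous construction of the kernel $\bbQ_j$, in particular establishing its joint measurability in $w_{j-1}$ and giving a unified treatment of the cases $\psi(W_j \mid W_{j-1}) \lessgtr 1$ (including the boundary case where $\psi$ is infinite, in which only the trivial branch $\beta_j = 1$ is available). Once the kernels are in place and the averaging identity $\int \bbQ_j(w_{j-1}, \cdot)\,d\bbP_{W_{j-1}}(w_{j-1}) = \bbP_{W_j}$ is verified, the remaining manipulations are elementary.
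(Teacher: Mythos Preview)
Your approach is correct and genuinely different from the paper's. The paper splits the index set $\{2,\ldots,k\}$ into $\cI_1=\{j:\psi(W_j\mid W_{j-1})<1\}$ and $\cI_2=\{j:\psi(W_j\mid W_{j-1})\geq 1\}$. On $\cI_2$ it expands $d\bbP_{W_j\mid W_{j-1}}-d\bbP_{W_j}$ as a signed difference of two probability measures (exactly as you do when $\beta_j=1$). On $\cI_1$ it instead performs a change of measure: it writes $d\bbP_{W_j\mid W_{j-1}}-d\bbP_{W_j}=\delta_j(w_j,w_{j-1})\,d\bbP_{W_j}$ with $\delta_j$ the centered Radon--Nikodym derivative, so that the suppression is carried by a \emph{random} factor $\delta_j$ bounded by $\psi(W_j\mid W_{j-1})$. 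The resulting $\Delc{\alpha}$ is therefore a genuine random variable $\frac{(-1)^{\#\{j\in\cI_2:\alpha_j=1\}}}{2^{\#\cI_1+1}}\prod_{j\in\cI_1}\delta_j(\Wc{\alpha}_j,\Wc{\alpha}_{j-1})$, and the coordinates $\alpha_j$ with $j\in\cI_1\cup\{1\}$ are dummies (hence the factor $2^{-\#\cI_1-1}$).

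Your convex decomposition $\bbP_{W_j\mid W_{j-1}}=(1-\beta_j)\bbP_{W_j}+\beta_j\bbQ_j$ is a cleaner packaging: the suppression factor $\beta_j$ is extracted as a deterministic scalar, every $\alpha_j$ with $j\geq 2$ is live (selecting $\bbQ_j$ versus $\bbP_{W_j}$), and $\Delc{\alpha}$ is a constant---which is in fact advantageous for the downstream application in \Cref{prop: MarkovNewVariables}, where independence of $\Delc{\alpha}$ from the original chain is required. The averaging identity $\int\bbQ_j(w_{j-1},\cdot)\,d\bbP_{W_{j-1}}=\bbP_{W_j}$ is the key new step you need that the paper avoids (since the paper only ever uses $\bbP_{W_j\mid W_{j-1}}$ or $\bbP_{W_j}$ as kernels, for which the marginal property is immediate). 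Your derivation of this identity is correct; note that when $\beta_j=0$ it does not follow from the decomposition, but in that case you may simply take $\bbQ_j(w_{j-1},\cdot)=\bbP_{W_j}$ and the identity holds trivially. The joint measurability of $\bbQ_j$ that you flag as delicate is in fact routine: it is an affine combination of a regular conditional probability and a fixed measure.
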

\begin{proof}
	Let us partition the index set as $\{1,\ldots,k \} = \{1 \} \cup \cI_1 \cup \cI_2$ with
	\begin{equation}
		\cI_1 \de \{i\geq 2: \psi(W_i, W_{i-1}) < 1  \}\ \text{ and }\
		\cI_2 \de \{i\geq 2:\psi(W_i , W_{i-1}) \geq 1  \}.
	\end{equation}
	The general idea of the subsequent argument is to execute a change--of--measure on all factors with $i\in \cI_1$ in \eqref{eq: Prop_BooleanMarkov} and to expand all remaining factors in a summation.
	The random variable $\Delta'$ will then arise from the Radon--Nikodym derivative in the change--of--measure.

	We start by introducing some notation for bookkeeping purposes.
	For any $i\in \{1,\ldots,k \}$ and $\alpha_i \in \{0,1 \}$, let $\bbQc{\alpha_i}_{W_i \mid W_{i-1}}: \cB(\cW_i)\times \cW_{i-1} \to \bbR$ denote the regular conditional probability measure defined for every measurable $E\subseteq \cW_i$ and $w_{i-1} \in \cW_{i-1}$ by
	\begin{equation}
		\bbQc{\alpha_i}_{W_i \mid W_{i-1} = w_{i-1}}(E) \de \begin{cases}
			\bbP_{W_i}(E)                        & \text{ if }i\in \cI_1 \cup \{1 \},               \\
			\bbP_{W_i \mid W_{i-1} = w_{i-1}}(E) & \text{ if }\alpha_i = 0 \text{ and }i \in \cI_2, \\
			\bbP_{W_i}(E)                        & \text{ if }\alpha_i = 1 \text{ and }i\in \cI_2.  \\
		\end{cases}\label{eq: Def_bbQa}
	\end{equation}
	Then, expanding the factors with $i\in \cI_2$ in the expression \eqref{eq: Prop_BooleanMarkov} for the Boolean cumulant into a sum yields that
	\begin{align}
		 & b(Y_1,\ldots,Y_k) =\sum_{\alpha_i\in \{0,1 \} : i\in \{1 \}\cup \cI_2 }\int_{\cW_1}\negsp \cdots\negsp \int_{\cW_k}\frac{(-1)^{\#\{i \in \cI_2: \alpha_i = 1\}}}{2}\times\label{eq: BooleanIntegrals1}                   \\
		 & \prod_{i\in \{1 \}\cup\cI_2}\negsp g_i(w_i) \intd \bbQc{\alpha_i}_{W_i\mid W_{i-1} = w_{i-1}}(w_i)\prod_{i\in \cI_1} g_i(w_i) \bigl(\intd\bbP_{W_i \mid W_{i-1}=w_{i-1}}(w_{i}) - \intd\bbP_{W_i}(w_i) \bigr). \nonumber
	\end{align}
	The factor $1/2$ here accounts for double counting: recall that the factor with $i=1$ in \eqref{eq: Prop_BooleanMarkov} does not involve a difference of conditional and unconditional probability measures.

	We next apply a change--of-measure to encode the decay of dependence.
	For any $i\in \cI_1$ let us define a measurable function $\delta_{i}:\cW_i \times \cW_{i-1} \to \bbR$ by
	\begin{equation}
		\delta_{i}(w_i, w_{i-1}) \de \frac{\intd \bbP_{W_i, W_{i-1} }}{\intd (\bbP_{W_i}\otimes \bbP_{W_{i-1}})}(w_i,w_{i-1}) - 1. \label{eq: Def_deltai}
	\end{equation}
	Note that \eqref{eq: Def_deltai} is well-defined due to Proposition \ref{prop: PsiRadonNikodym} and that for $(\bbP_{W_i}\otimes \bbP_{W_{i-1}})$-almost every $w_i, w_{i-1}$,
	\begin{equation}
		-\psi(W_i ,  W_{i-1}) \leq \delta_{i}(w_i, w_{i-1}) \leq \psi(W_{i} , W_{i-1}). \label{eq: deltaibound}
	\end{equation}
	Further, by the definition \eqref{eq: Def_deltai} of $\delta_i$ in terms of a Radon--Nikodym derivative, we have that $\intd \bbP_{W_{i}\mid W_{i-1} = w_{i-1} }(w_i) - \intd \bbP_{W_i}(w_i) =  \delta_{i}(w_i,w_{i-1}) \intd \bbP_{W_i}(w_i)$.
	Hence, using this on the factors with $i\in \cI_1$ in \eqref{eq: BooleanIntegrals1} and subsequently using that $\bbP_{W_i} = \bbQc{\alpha_i}_{W_i\mid W_{i-1}}$ for any $i\in \cI_1$,
	\begin{equation}
		b(Y_1,\ldots,Y_k) \label{eq: FinalB} = \negsp \negsp \negsp \sum_{\alpha_1,\ldots,\alpha_{k} \in \{0,1 \}}\int_{\cW_1}\negsp \negsp \cdots\negsp \int_{\cW_k} \negsp\negsp  \delta (w_1,\ldots,w_k)    \prod_{i = 1}^k g_i(w_i) \intd \bbQc{\alpha_i}_{W_i\mid W_{i-1} =
		w_{i-1}}(w_i),
	\end{equation}
	where $\delta:\cW_1\times \ldots \cW_k\to \bbR$ is defined by
	\begin{equation}
		\delta(w_1,\ldots,w_k) \de \frac{(-1)^{\#\{i\in \cI_2: \alpha_i = 1\}}}{2^{\# \cI_1 + 1}}\prod_{i\in \cI_1} \delta_i(w_i, w_{i -1}). \label{eq: Def_dela}
	\end{equation}
	The additional factor $(1/2)^{\#\cI_1}$ relative to \eqref{eq: BooleanIntegrals1} accounts for double counting: note that the sum in \eqref{eq: BooleanIntegrals1} runs only over $\alpha_i$ with $i\in \{1 \}\cup \cI_2$, while \eqref{eq: FinalB} also allows $i\in \cI_1$.

	For any $\alpha = (\alpha_1,\ldots,\alpha_k)$ as in \eqref{eq: FinalB}, let us define $(\Wc{\alpha}_1,\ldots,\Wc{\alpha}_k)$ to be random variables with joint law given by $\prod_{i=1}^k\intd\bbQc{\alpha_i}_{W_i\mid W_{i-1}}$.
	Further, define $\Delc{\alpha} \de \delta(\Wc{\alpha}_1,\ldots,\Wc{\alpha}_k)$.
	Recall from statement \nref{item: BookkeepingLambdaPsi_Expansion}{(3)} of Proposition \ref{prop: BookkeepingLambdaPsi} that $Y_i = g_i(W_i)$.
	Hence, \eqref{eq: FinalB} yields that
	\begin{equation}
		b(Y_1,\ldots,Y_k) = \sum_{\alpha \in \{0,1 \}^k} \bbE[\Delc{\alpha} g_1(\Wc{\alpha}_1) \cdots g_k(\Wc{\alpha}_k) ].
	\end{equation}
	To remove the dependence on $\alpha$ from the notation, we can encode the summation into a mixture of measures.
	Let $A \sim \Unif\{0,1 \}^k$ be uniformly distributed, independent from the preceding data.
	Then, it holds with $W_i' \de \Wc{A}_i$ and $\Delta' \de 2^k\Delc{A}$ that
	\begin{equation}
		b(Y_1,\ldots,Y_k) = \bbE[\Delta' g_1(W_1') \cdots g_k(W_k') ]. \label{eq: Conclusion_BooleanIntegrals}
	\end{equation}
	It remains to verify that the properties claimed in Proposition \ref{prop: BookkeepingLambdaPsi} are satisfied.

	First, it follows from \eqref{eq: Def_bbQa} that $\prod_{i}\bbQc{\alpha_i}_{W_i \mid W_{i-1}}$ has marginal distribution $\bbP_{W_i}$ at the $i$th coordinate; to verify this use induction on $i$ together with the Markov property and Bayes' theorem.
	Consequently, $\Wc{\alpha}_i$ has the same law as $W_i$ for every $\alpha$, and it follows that the same holds for the mixture $W_i'$.
	This yields the distributional property in item \nref{item: BookkeepingLambdaPsi_Distribution}{(1)}.
	Second, note that $\Vert \Delta' \Vert_{\Linf} \leq 2^k\max_{\alpha} \Vert \Delc{\alpha} \Vert_{\Linf}$.
	Item \nref{item: BookkeepingLambdaPsi_Lambda}{(2)} hence follows from \eqref{eq: deltaibound} and \eqref{eq: Def_dela} since $\#\cI_1 \geq 0$.
	Finally, the expansion in item \nref{item: BookkeepingLambdaPsi_Expansion}{(3)} is explicit in \eqref{eq: Conclusion_BooleanIntegrals}.
\end{proof}
We next combine Propositions \ref{prop: BooleanInterpolation} and \ref{prop: BookkeepingLambdaPsi}.
First, however, let us set up some notation.
Given a smooth function $g:\bbC^{d\times d}\to \bbC$, let it be understood that $\partial_{\bB}g:\bbC^{d\times d} \to \bbC$ denotes the directional derivative of $g$ in the direction of a matrix $\bB \in \bbC^{d\times d}$:
\begin{equation}
	(\partial_{\bB}g)(\bM) \de \lim_{\varepsilon \to 0} \frac{g(\bM  + \varepsilon \bB) - g(\bM)}{\varepsilon}.
\end{equation}
Recall that the expansion in Proposition \ref{prop: BooleanInterpolation} involves a sum over permutations $\rho \in \cS_k$ with $\rho(1)=1$.
To bookkeep these permutations as well as indices $i_1,\ldots,i_k\in \{1,\ldots,n\}$ for the Markovian sequence, we define an index set for any $k,n\geq 1$ by
\begin{equation}
	\cI_{k,n} \de \{\rho \in \cS_k : \rho(1) = 1 \} \times \{1,\ldots,n \}^k.\label{eq: Def_Ikn}
\end{equation}
Proposition \ref{prop: MarkovNewVariables} below establishes an expansion for the rate--of--change along the interpolation $\bS(t)$ from \eqref{eq: Def_S(t)} using new random variables whose law depends on an index $(\rho,i)\in \cI_{k,n}$.
The exact joint distribution is however not required in the subsequent arguments, so we only explicitly state those properties that we need and refer to the proof for the construction.
\begin{proposition}\label{prop: MarkovNewVariables}
	Adopt the assumptions and notation from Section \ref{sec: MarkovModel}.
	That is, let $Z_1,\ldots,Z_n$ be a Markovian sequence and consider the associated centered random matrices $\bX_i = \bF_i(Z_i)$ for $i\geq 1$ as well as $\bS=\bX_0 + \sum_{i=1}^n\bX_i$ and a Gaussian model $\bG$.

	Then, there exist random variables $(\Zci_{i,1},\ldots,\Zci_{i,k}, \Delci_i )_{(\rho,i)\in \cI_{k,n}}$ for every $k\geq 3$ that satisfy the following four properties for any $(\rho,i) \in \cI_{k,n}$ and $j\in \{1,\ldots,k\}$:
	\begin{description}
		\item[\textbf{(1) Marginal distribution:}]\label{item:Marginal} It holds that $\Zci_{i,j}$ has the same law as $Z_{i_j}$.
		\item[\textbf{(2) Independence:}]\label{item:Independence} The tuple of random variables $(\Zci_{i,1},\ldots,\allowbreak \Zci_{i,k}, \allowbreak \Delci_i)$ is independent of the Markovian sequence $Z_1,\ldots,Z_n$ and the Gaussian model $\bG$.
		\item[\textbf{(3) Suppression:}]\label{item:Suppression} The random variable $\Delci_i$ takes values in $\bbR$ and satisfies that almost surely
		      \begin{equation*}
			      \lvert \Delci_i \rvert \leq 2^{k-\#\pi_\rho}\prod_{\cJ \in \pi_\rho} \prod_{j = 2}^{\# \cJ}\min\Bigl\{ 1,\psi\bigl(Z_{\sort(i)_{\cJ(j)}} , Z_{\sort(i)_{\cJ(j-1)}}\bigr) \Bigr\}
		      \end{equation*}
		      where we recall that $\cJ(j)$ is the $j$th smallest element in $\cJ$, and we use the convention that an empty product yields unity when $\#\cJ = 1$.
		\item[\textbf{(4) Expansion for Gaussian interpolations:}]\label{item:Expansion}
		      Define random matrices by $\bXci_{i,j}\de \bF_{i_j}(Z_{i,j}^{\rho})$.
		      Then, for every polynomial $g:\bbC^{d\times d}\to \bbC$ and $t\in [0,1]$,
		      \begin{equation*}
			      \dd{t} \bbE\bigl[g\bigl(\bS(t)\bigr)\bigr] = \frac{1}{2}\sum_{k=3}^\infty \frac{t^{\frac{k}{2}-1}}{(k-1)!}  \bbE\Bigl[ \sum_{(\rho,i)\in \cI_{k,n}} \Delci_i \partial_{\bXci_{i,1}} \cdots \partial_{\bXci_{i,k}} g(\bS(t))\Bigr].
		      \end{equation*}
	\end{description}
\end{proposition}
\begin{proof}
	We may assume without loss of generality that $\bG = \bX_0 + \sum_{i=1}^n \bG_i$ where $(\bG_1,\ldots,\bG_n)$ is a Gaussian model for the rectangular matrix $(\bX_1,\ldots,\bX_n)$.\footnote{For instance, because the independence from all the other random variables in Proposition \ref{prop: MarkovNewVariables} implies that the statement can only depend on the law of $\bG$.
		Alternatively, if one insists on maintaining the original matrix, one can use the joint distribution of $(\bG_1,\ldots,\bG_n)$ and $\bX_0 + \sum_{i}\bG_i$ to define the new matrices with a coupling to $\bG$.}
	Then, viewing $\bS(t)$ as a function of these rectangular matrices and identifying complex matrices with vectors in $\bbR^{2d^2}$ by taking the entries' real and imaginary parts, Proposition \ref{prop: BooleanInterpolation} yields that
	\begin{align}
		\dd{t}\bbE\bigl[g\bigl(\bS(t)\bigr)\bigr] =  {} & {}  \frac{1}{2}\sum_{k=3}^\infty\frac{t^{\frac{k}{2}-1}}{(k-1)!}\sum_{i\in \{1,\ldots,n \}^k}\sum_{\rho \in \cS_k:\rho(1) = 1} \sum_{\Gamma_1,\ldots,\Gamma_k \in \{1,\ldots d \}^2\times \{\operatorname{Re}, \operatorname{Im}\}} (-1)^{\#\pi_\rho- 1} \nonumber \\
		                                                & \times\Bigl\{\prod_{\cJ \in \pi_\rho} b(\bX_{\sort(i)_j,\Gamma_j}: j\in \cJ) \Bigr\}\bbE\Bigl[\frac{\partial^k g}{\partial \bM_{\Gamma_1}  \cdots \partial \bM_{\Gamma_k}}(\bS(t)) \Bigr]\label{eq:AngryNut}
	\end{align}
	where the $\Gamma$-subscripts refer to the entries' real and imaginary parts.
	For instance, given $\Gamma = (v,w,\operatorname{Re})$ with $v,w\in \{1,\ldots,d \}$, we use $\bX_{i_j,\Gamma}$ to refer to the real part of the $vw$th entry of the matrix $\bX_{i_j}$.
	Similarly, the partial derivatives in \eqref{eq:AngryNut} are taken in the direction of the real/imaginary part of the corresponding argument of the function $g:\bbC^{d\times d} \to \bbC$.

	To rewrite this expansion, we next apply Proposition \ref{prop: BookkeepingLambdaPsi} to the Boolean cumulants and then exploit that expectation factorizes over products of independent random variables.
	To make this precise, define random variables $(\Zci_{i,1},\ldots,\Zci_{i,k}, \{ \Delci_{i,\cJ}: \cJ\in \pi_{\rho}\}  )_{(\rho,i)\in \cI_{k,n}, k\geq 3}$ that are independent of $Z_1,\ldots,Z_n$ and $\bG$ with joint law specified by the following properties:
	\begin{enumerate}
		\item Given $(\rho,i) \in \cI_{k,n}$ and some part $\cJ \in \pi_{\rho}$ in the partition induced by $\rho$, set
		      \begin{equation}
			      (W_1 ,\ldots,W_{\#\cJ}) \de  (Z_{\sort(i)_{\cJ(1)}}, \ldots, Z_{\sort(i)_{\cJ(\#\cJ)}}). \label{eq:VagueIce}
		      \end{equation}
		      Then, we define the joint law of $\Delci_{i,\cJ}$ and the random variables $\Zci_{i,j}$ with $j\in \cJ$ to be given by the random variables resulting from Proposition \ref{prop: BookkeepingLambdaPsi} applied to $W_1,\ldots,W_{\#\cJ}$, with the new variables ordered in such a fashion to remove the $\operatorname{sort}(\cdot)$-operation from \eqref{eq:VagueIce}.
		      More precisely, if $\mu_i \in \cS_{k}$ is a permutation such that $i_{\mu_i(j)} = \sort(i)_{j}$ for every $j\leq k$, then
		      \begin{equation}
			      (\Zci_{i,\mu_i(\cJ(1))}, \ldots, Z_{i, \mu_i(\cJ(\#\cJ))}, \Delci_{i,\cJ}) \sim   (W_1',\ldots, W_{\# \cJ}', \Delta').  \label{eq:TinyFog}
		      \end{equation}
		\item The random variables $\Zci_{i,j}$ and $\Delci_{i,\cJ}$ associated with different $(\rho,i)\in \cI_{k,n}$ or different parts $\cJ\in \pi_\rho$ are independent.
		      More precisely, if $T_{i,\rho,\cJ}$ is the tuple on the left-hand side of \eqref{eq:TinyFog}, then the variables $(T_{i,\rho,\cJ}:k\geq 3, (i,\rho)\in \cI_{k,n}, \cJ\in \pi_{\rho} )$ are jointly independent.
	\end{enumerate}
	Recall that $\bX_{i_j} = \bF_{i_j}(Z_{i_j})$.
	Hence, for any fixed $(\rho,i) \in \cI_{k,n}$, the reference to Proposition \ref{prop: BookkeepingLambdaPsi} in the definitions \eqref{eq:VagueIce}--\eqref{eq:TinyFog} ensures that for any $\Gamma_1,\ldots,\Gamma_k$ and any $\cJ\in \pi_{\rho}$,
	\begin{equation}
		b\bigl(\bX_{\sort(i)_j,\Gamma_j}: j\in \cJ\bigr) = \bbE\biggl[\Delci_{i,\cJ}\prod_{j\in \cJ} \bXci_{i, \mu_i(j), \Gamma_{j}}  \biggr] \label{eq:RipeQuiz}
	\end{equation}
	where we recall from item \nref{item:Expansion}{(4)} in Proposition \ref{prop: MarkovNewVariables} that $\bXci_{i_j} = \bF_{i_j}(\Zci_{i,j})$.
	Now, using \eqref{eq:RipeQuiz} and the fact that the variables were defined to be independent of $Z_1,\ldots,Z_n$ and $\bG$ as well as the joint independence in the second property of the definition,
	\begin{align}
		\Bigl(\prod_{\cJ \in \pi_\rho} b(\bX_{\sort(i)_j,\Gamma_j}: j\in \cJ) {} & {}\Bigr)\bbE\Bigl[\frac{\partial^k g}{\partial \bM_{\Gamma_1}  \cdots \partial \bM_{\Gamma_k}}(\bS(t)) \Bigr]\label{eq:RipeGum}                                                                                           \\
		                                                                         & = \bbE\biggl[\Bigl(\prod_{\cJ \in \pi_\rho} \Delci_{i,\cJ} \prod_{j\in \cJ} \bXci_{i, \mu_i(j), \Gamma_{j}}  \Bigr)\frac{\partial^k g}{\partial \bM_{\Gamma_1}  \cdots \partial \bM_{\Gamma_k}}(\bS(t)) \biggr].\nonumber
	\end{align}
	Hence, if we define $\Delci_i \de  (-1)^{\#\pi_\rho- 1} \prod_{\cJ\in \pi_\rho}\Delci_{i,\cJ}$, then
	\begin{align}
		 & (-1)^{\#\pi_\rho- 1} \sum_{\Gamma_1,\ldots,\Gamma_k \in \{1,\ldots d \}^2 \times \{\operatorname{Re}, \operatorname{Im} \}}\Bigl(\prod_{\cJ \in \pi_\rho} b(\bX_{\sort(i)_j,\Gamma_j}: j\in \cJ) \Bigr)\bbE\Bigl[\frac{\partial^k g}{\partial \bM_{\Gamma_1}  \cdots \partial \bM_{\Gamma_k}}(\bS(t)) \Bigr]\nonumber                 \\
		 & \quad = \sum_{\tilde{\Gamma}_1,\ldots,\tilde{\Gamma}_k \in \{1,\ldots d \}^2 \times \{\operatorname{Re}, \operatorname{Im} \}} \bbE\biggl[\Delci_i  \prod_{j = 1}^k \bXci_{i, j, \tilde{\Gamma}_{j}}\frac{\partial^k g}{\partial \bM_{\tilde{\Gamma}_1}  \cdots \partial \bM_{\tilde{\Gamma}_k}}(\bS(t)) \biggr]\label{eq:PinkDragon} \\
		 & \quad = \bbE\Bigl[\Delci_i  \partial_{\bXci_{i, 1}}\cdots \partial_{\bXci_{i, k}}g(\bS(t)) \Bigr]\nonumber
	\end{align}
	where the first equality renumbered the indices by taking $\tilde{\Gamma}_j \de \Gamma_{\mu_i^{-1}(j)}$, and the second equality follows by recognizing the standard entry-wise decomposition for a directional derivative.
	Substitution of \eqref{eq:PinkDragon} in \eqref{eq:AngryNut} yields the expansion in item \nref{item:Expansion}{(4)} of Proposition \ref{prop: MarkovNewVariables}.

	It remains to verify items \nref{item:Marginal}{(1)} to \nref{item:Suppression}{(3)}.
	The marginal distribution in item \nref{item:Marginal}{(1)} follows from corresponding property in Proposition \ref{prop: BookkeepingLambdaPsi} after chasing the indices through \eqref{eq:VagueIce} and \eqref{eq:TinyFog}.
	The independence in item \nref{item:Independence}{(2)} is explicit in the definitions; see the paragraph preceding \eqref{eq:VagueIce}.
	Finally, by \eqref{eq:TinyFog} and the suppression property in Proposition \ref{prop: BookkeepingLambdaPsi}, for any $(i,\rho)$ and $\cJ\in \pi_{\rho}$,
	\begin{equation}
		\lvert \Delci_{i,\cJ} \rvert  \leq 2^{\#\cJ -1} \prod_{j=2}^k \min\bigl\{1,\psi\bigl(Z_{\sort(i)_{\cJ(j)}} , Z_{\sort(i)_{\cJ(j -1)}}\bigr)\bigr\}.
	\end{equation}
	The desired suppression property in item \nref {item:Suppression}{(3)} hence follows using that
	$
		\lvert \Delci_i \rvert  = \prod_{\cJ \in \pi_{\rho}} \lvert  \Delci_{i,\cJ} \rvert
	$
	and that $\sum_{\cJ\in \pi_{\rho}}(\#\cJ -1)= k-\#\pi_\rho$ since $\pi_\rho$ is a partition of $\{1,\ldots,k \}$.
\end{proof}

\subsection{Combining classical--to--Boolean combinatorics and the decay of dependence}\label{sec: DeltasPsi}
Passing through Boolean cumulants in the proofs leading up to Proposition \ref{prop: MarkovNewVariables} allowed us to efficiently exploit the Markovian structure.
This however comes at a cost: we now have to understand the combinatorics associated with the classical--to--Boolean relations.

Specifically, our goal in this section is to estimate the total contribution of the scalar-valued random variables $\Delci_i$ that encode the suppression due to the decay of dependence.
Regarding the individual random variables, we have the following upper bound:
\begin{lemma}\label{lem: Delci_Individual}
	With notation as in Proposition \ref{prop: MarkovNewVariables}, it holds for every $(\rho,i) \in \cI_{k,n}$ that
	\begin{equation}
		\Vert \Delci_i   \Vert_{\Linf} \leq 2^{k-1}\exp\Bigl(-\sum_{j=2}^k \bb1\{\sort(i)_{\rho(j)} > \sort(i)_{\rho(j-1)}\}\Bigl\lfloor \frac{\sort(i)_{\rho(j)} - \sort(i)_{\rho(j-1)}}{\Psi(Z)}\Bigr\rfloor  \Bigr).\nonumber
	\end{equation}
\end{lemma}
\begin{proof}
	Due to Proposition \ref{prop: ExponentialPsi} it holds that for all $l\in \{1,\ldots,n \}$ and $t\in \{0,\ldots,n-l \}$,
	\begin{equation}
		\min\{1 ,\psi(Z_{l+t} , Z_l) \} \leq \Bigl(\frac{1}{4}\Bigr)^{\lfloor t/\Psi(Z) \rfloor} \leq \exp\Bigl(-\Bigl\lfloor \frac{t}{\Psi(Z)} \Bigr\rfloor\Bigr).
	\end{equation}
	Hence, using item \nref{item:Suppression}{(3)} in Proposition \ref{prop: MarkovNewVariables} and that $\#\pi_{\rho}\geq 1$,
	\begin{equation}
		\Vert \Delci_i \Vert_{\Linf} \leq 2^{k-1}
		\prod_{\cJ \in \pi_\rho} \prod_{j = 2}^{\# \cJ}\exp\Bigl(-\Bigl\lfloor \frac{\sort(i)_{\cJ(j)} - \sort(i)_{\cJ(j-1)}}{\Psi(Z)} \Bigr\rfloor \Bigr).\label{eq:HauntedLion}
	\end{equation}

	Recall from Section \ref{sec: BooleanJointCumulants} that $\pi_\rho$ denotes the partition of increasing runs associated with the permutation $\rho\in \cS_k$.
	So, for instance, the part $\cJ_1 \in \pi_{\rho}$ that contains $\rho(1)$ is of the form $\{\rho(1), \rho(2),\ldots, \rho(\ell_1)\}$ where $\ell_1\geq 1$ is the least value with $\rho(\ell_1+1) < \rho(\ell_1)$, or where $\ell_1 = k$ if no such value exists.
	Hence, bringing the product inside the exponential,
	\begin{equation}
		\prod_{j=2}^{\#\cJ_1} \exp\Bigl(-\Bigl\lfloor \frac{\sort(i)_{\cJ_1(j)} - \sort(i)_{\cJ_1(j-1)}}{\Psi(Z)} \Bigr\rfloor \Bigr) = \exp\Bigl(-\sum_{j=2}^{\ell_{1}} \Bigl\lfloor \frac{\sort(i)_{\rho(j)} - \sort(i)_{\rho(j-1)}}{\Psi(Z)}\Bigr\rfloor  \Bigr),\nonumber
	\end{equation}
	where it is to be understood that the right-hand side is unity when $\ell_1 = 1$.
	Continuing sequentially, the part that contains $\rho(\ell_{1}+1)$ is $\cJ_{2} \de \{\rho(\ell_{1}+1),\ldots,\rho(\ell_{2})\}$ where $\ell_{2} \geq \ell_1 + 1$ is the least value with $\rho(\ell_{2} +1) < \rho(\ell_{2})$, or where $\ell_2 = k$ if no such value exists.
	Hence,
	\begin{align}
		\prod_{m=1}^2{} & {}\prod_{j=2}^{\#\cJ_m} \exp\Bigl(-\Bigl\lfloor \frac{\sort(i)_{\cJ_m(j)} - \sort(i)_{\cJ_m(j-1)}}{\Psi(Z)} \Bigr\rfloor \Bigr)                                                                                                                      \\
		                & = \exp\Bigl(-\sum_{j=2}^{\ell_{1}} \Bigl\lfloor \frac{\sort(i)_{\rho(j)} - \sort(i)_{\rho(j-1)}}{\Psi(Z)}\Bigr\rfloor + \sum_{j=\ell_1+2}^{\ell_2} \Bigl\lfloor \frac{\sort(i)_{\rho(j)} - \sort(i)_{\rho(j-1)}}{\Psi(Z)}\Bigr\rfloor\Bigr)\nonumber \\
		                & =  \exp\Bigl(-\sum_{j=2}^{\ell_2} \bb1\{\rho(j) > \rho(j-1)\}\Bigl\lfloor \frac{\sort(i)_{\rho(j)} - \sort(i)_{\rho(j-1)}}{\Psi(Z)}\Bigr\rfloor  \Bigr), \nonumber
	\end{align}
	where the final equality used the definition of $\ell_1$ and $\ell_2$.
	Continue in this fashion until $\ell_m$ reaches $k$.
	Then, using that $\bb1\{\rho(j) >\rho(j-1)\}\bb1\{\sort(i)_{\rho(j)} \neq \sort(i)_{\rho(j-1)} \} = \bb1\{\sort(i)_{\rho(j)} > \sort(i)_{\rho(j-1)} \}$ because $\operatorname{sort}(i)$ is nondecreasing concludes the proof.
\end{proof}
We next sum over $\rho$ and $i$ to quantify the total contribution of the random variables $\Delci_i$.
Specifically, we study a restricted sum wherein one of the coordinates of $i \in \{1,\ldots,n \}^k$ is kept fixed.
(This restriction is used in the proof of Lemma \ref{lem: Lem_S(t)DerivativeBound_Psi}.)
We start with initial simplifications that only exploit the general structure of the right-hand side of Lemma \ref{lem: Delci_Individual}.

\subsubsection{General-purpose simplifications}
For any sequence $i \in \bbR^k$ and permutation $\rho \in \cS_k$, let us abbreviate $i_\rho$ for the permuted sequence $(i_{\rho(j)})_{j=1}^k$.

\begin{lemma}\label{lem: Swap}
	Fix some  $I\in \{1,\ldots,n \}$ and $J\in \{1,\ldots,k \}$.
	Then, for every nonnegative function $E:\{1,\ldots,n \}^k \to \bbR_{\geq 0}$,
	\begin{equation}
		\sum_{\substack{\rho\in \cS_k\\ \rho(1) =1}} \sum_{\substack{i\in \{1,\ldots,n \}^k\\ i_J = I}} \negsp\negsp E\bigl( \sort(i)_\rho \bigr) \leq  k!  \sum_{T=1}^k\negsp \sum_{\substack{i\in \{1,\ldots,n \}^k\\ i_{T} = I,\ i_1 = \min(i_j:j\leq k)  }}\negsp\negsp\negsp\negsp\negsp   E(i).\label{eq:WeepyVan}
	\end{equation}
\end{lemma}
\begin{proof}
	Fix some arbitrary sequence $t\in \{1,\ldots,n \}^k$.
	We verify that \eqref{eq:WeepyVan} holds for the corresponding basis function $E(\cdot) \de \bb1\{\cdot = t \}$.
	The case with arbitrary nonnegative $E$ then follows since both sides of the desired inequality are linear in $E$.

	In particular, sorting $i$ and thereafter applying an arbitrary permutation $\rho$ with $\rho(1) = 1$ brings the minimal element to the front.
	Consequently, the left-hand side of \eqref{eq:WeepyVan} can only be nonzero if $t_1 = \min(t_j:j\leq k)$ and $t_T = I$ for some $T \leq k$.
	The desired bound \eqref{eq:WeepyVan} is trivial if the left-hand side is zero, so we may assume that $t$ satisfies the latter conditions.
	Then, there is at least one nonzero term in the right-hand side of \eqref{eq:WeepyVan} and it suffices to show that the left-hand side is $\leq k!$.

	To this end, note that it follows from the assumption that $E(\cdot) = \bb1\{\cdot = t \}$ that
	\begin{align}
		\sum_{\substack{\rho\in \cS_k                                                                                                  \\ \rho(1) =1}} \sum_{\substack{i\in \{1,\ldots,n \}^k\\ i_J = I}} \negsp\negsp\negsp &E\bigl( \sort(i)_\rho \bigr)
		= \#\{(i,\rho) \in \{1,\ldots,n \}^k \times \cS_k: \sort(i)_\rho = t,\, i_J = I,\,  \rho(1)= 1 \} \nonumber                    \\[-3ex]
		 & \leq \#\{i\in \{1,\ldots,n \}^k: \sort(i) = \sort(t) \} \times \#\{\rho \in \cS_k: \sort(t)_{\rho} = t\}\label{eq:JustLeaf}
	\end{align}
	where the inequality follows by neglecting final two constraints in the first line.

	Now, consider the (right) group action of $\cS_k$ on $\{1,\ldots,k \}^n$ defined by $i*\rho \de i_{\rho}$.
	Then, the first set on the right-hand side of \eqref{eq:JustLeaf} is simply the orbit of $t$ under this group action:
	\begin{equation}
		\{i\in \{1,\ldots,n \}^k: \sort(i) = \sort(t) \} = \{t_{\rho} : \rho \in \cS_k \}.
	\end{equation}
	Moreover, the cardinality of the second set agrees with that of the stabilizer of $t$,
	\begin{equation}
		\#\{\rho \in \cS_k: \sort(t)_{\rho} = t\} = \#\{\theta \in \cS_k: t_{\theta} = t\},
	\end{equation}
	since a bijection may be defined by setting $\theta \de \mu \circ \rho$ with $\mu\in \cS_k$ a permutation satisfying $t_{\mu} = \sort(t)$.
	It hence follows from the orbit-stabilizer theorem that
	\begin{equation}
		\#\{t_{\rho} : \rho \in \cS_k \} \times \#\{\theta \in \cS_k: t_{\theta} = t\}= \# \cS_k = k!. \label{eq:GreenNose}
	\end{equation}
	Combine \eqref{eq:JustLeaf}--\eqref{eq:GreenNose} to conclude that the left-hand side of \eqref{eq:WeepyVan} is $\leq k!$, as desired.
\end{proof}

\begin{lemma}
	\label{lem: Translation}
	Fix some integer $I\in \bbZ$ and $T\in \{1,\ldots,k \}$ as well as a  nonnegative function $E:\bbZ^{k} \to \bbR_{\geq 0}$.
	Assume that $E$ is translation invariant in the sense that $E(i_1,\ldots,i_k) = E(i_1 +x,\ldots,i_k + x )$ for every $i_1,\ldots,i_k\in \bbZ$ and $x\in \bbZ$. Then,
	\begin{equation}
		\sum_{\substack{i \in \bbZ^k,\ i_T = I\\
				i_1 = \min(i_j:j\leq k)}} \negsp\negsp\negsp E(i) = \sum_{i \in \bbZ_{\geq 0}^k,\ i_1 = 0} \negsp\negsp\negsp E(i)\label{eq:WeepyGym}
	\end{equation}
\end{lemma}

\begin{proof}
	Note that $i$ runs over integer sequences in the left-hand side of \eqref{eq:WeepyGym} that may also include negative values.
	We subdivide the sum by the minimal value $\mathfrak{i}_{\min}$ attained by the sequence $i$ and subsequently use translation invariance with $x=-\mathfrak{i}_{\min}$
	\begin{equation}
		\sum_{\substack{i \in \bbZ^k,\ i_T = I\\
				i_1
				= \min(i_j:j\leq k)}} \negsp\negsp\negsp E(i)
		= \sum_{\mathfrak{i}_{\min}  =-\infty }^I\negsp \sum_{\substack{i \in \bbZ^k,\ i_T = I\\
				i_1 = \min(i_j:j\leq k) = \mathfrak{i}_{\min}}} \negsp\negsp\negsp\negsp\negsp\negsp\negsp   E(i)
		=
		\sum_{\mathfrak{i}_{\min}  =-\infty }^I
		\sum_{\substack{i \in \bbZ^k,\ i_T = I -\mathfrak{i}_{\min}\\
				i_1 = \min(i_j:j\leq k) = 0}} \negsp\negsp\negsp\negsp\negsp   E(i).\label{eq:RestlessOwl}
	\end{equation}
	Here, making a change of variables $\mathfrak{I} \de I -\mathfrak{i}_{\min}$ and rewriting the constraints in the second sum into an equivalent formulation,
	\begin{equation}
		\sum_{\mathfrak{i}_{\min}  =-\infty }^I
		\sum_{\substack{i \in \bbZ^k,\, i_T = I -\mathfrak{i}_{\min}\\
				i_1 = \min(i_j:j\leq k) = 0}} \negsp\negsp\negsp\negsp\negsp   E(i) =
		\sum_{\mathfrak{I} \geq 0} \sum_{\substack{i\in \bbZ_{\geq 0}^k\\
				i_{T} = \mathfrak{I},\ i_1 = 0 }}\negsp\negsp\negsp\negsp\negsp  E(i) = \sum_{i \in \bbZ_{\geq 0}^k,\ i_1 = 0} \negsp\negsp\negsp E(i). \label{eq:EmptyArm}
	\end{equation}
	The combination of \eqref{eq:RestlessOwl} and \eqref{eq:EmptyArm} yields \eqref{eq:WeepyGym}, as desired.
\end{proof}
\begin{corollary}\label{cor: Del_FixIJ_exp}
	For any fixed $I\in \{1,\ldots,n \}$ and $J\in \{1,\ldots,k \}$,
	\begin{equation}
		\sum_{(\rho,\alpha,i) \in \cI_{k,n}: i_J = I} \negsp \negsp\negsp  \Vert \Delci_i  \Vert_{\Linf}\leq 4^{k-1} k! \negsp \sum_{i \in \{0\} \times \bbZ_{\geq 0}^{k-1}} \negsp\negsp \exp\Bigl(-\sum_{j=2}^k \bb1\{i_j > i_{j-1}\} \Bigl\lfloor \frac{i_j - i_{j-1}}{\Psi(Z)}\Bigr\rfloor\Bigr). \label{eq:CozyMan}
	\end{equation}
\end{corollary}
\begin{proof}
	Let us define a function $E:\bbZ^{k} \to \bbR_{\geq 0}$ by
	\begin{equation}
		E(i) \de 2^{k-1}\exp\Bigl(-\sum_{j=2}^k \bb1\{i_j > i_{j-1}\} \Bigl\lfloor \frac{i_j - i_{j-1}}{\Psi(Z)}\Bigr\rfloor\Bigr).\label{eq: Def_Ei}
	\end{equation}
	Then, Lemma \ref{lem: Delci_Individual} yields that $
		\Vert \Delci_i \Vert_{\Linf} \leq  E(\sort(i)_{\rho}).$
	Hence,  recalling the definition of $\cI_{k,n}$ from \eqref{eq: Def_Ikn} and using Lemma \ref{lem: Swap},
	\begin{equation}
		\sum_{(\rho,i) \in \cI_{k,n}: i_J = I} \negsp\negsp\negsp\negsp\negsp\negsp  \Vert \Delci_i  \Vert_{\Linf} = \sum_{\substack{\rho \in \cS_k\\
				\rho(1) = 1}} \sum_{\substack{i\in \{1,\ldots,n \}^k\\
				i_J = I}}\negsp\negsp\negsp\negsp    E(\sort(i)_{\rho}) \leq k!    \sum_{T=1}^k\negsp\negsp\negsp \sum_{\substack{i\in \{1,\ldots,n \}^k\\ i_{T} = I,\ i_1 = \min(i_j:j\leq k)  }}\negsp\negsp\negsp\negsp\negsp\negsp\negsp\negsp    E(i).\label{eq:ValidZero}
	\end{equation}
	Note that $E(i)$ is translation invariant.
	Hence, by enlarging the sum and using Lemma \ref{lem: Translation},
	\begin{equation}
		\sum_{\substack{i\in \{1,\ldots,n \}^k\\ i_{T} = I,\ i_1 = \min(i_j:j\leq k)  }}\negsp\negsp\negsp\negsp\negsp\negsp\negsp\negsp    E(i) \leq \sum_{\substack{i\in \bbZ^k\\ i_{T} = I,\ i_1 = \min(i_j:j\leq k)  }}\negsp\negsp\negsp\negsp\negsp\negsp\negsp\negsp    E(i)  = \sum_{i\in \{0 \} \times \bbZ^{k-1}_{\geq 0}} E(i).  \label{eq:LoudOwl}
	\end{equation}
	The desired estimate \eqref{eq:CozyMan} now follows by combining \eqref{eq:ValidZero} and \eqref{eq:LoudOwl}, where we use that $k \leq 2^{k-1}$ to bound the factor arising from the sum over $T$ in \eqref{eq:ValidZero}.
\end{proof}

The sum in \eqref{eq:CozyMan} can be interpreted as the normalization constant of a $\bbZ_{\geq 0}$-valued stochastic process for which going up by more than $\Psi(Z)$ in a single step is penalized by an exponential factor, and with steps downward or up by less than $\Psi(Z)$ being cost-free.
Our goal in the subsequent arguments is to establish an upper bound on this normalization constant.
\begin{remark}
	The aforementioned interpretation suggests that a significant contribution to the sum should come from the case where the path starts with a number of penalized steps upwards and subsequently only takes unpenalized steps, as this creates the most possible combinations given a fixed amount of penalization.
	An earlier version of this work utilized this structure to bound the normalization constant; the interested reader is referred to arXiv:2307.11632v2.
	A direct computation however turns out to be shorter, which we present below.
	We thank an anonymous referee for suggesting the more efficient argument.
\end{remark}

\subsubsection{Bounding the ``normalization constant''}
\label{sec: NormConst}

We start with preparatory estimates:

\begin{lemma}
	\label{lem: IntegralApproximations}

	Fix integers $i,P\geq 1$ and $q\geq 0$.
	Then,
	\begin{align}
		\sum_{\ell = 0}^{i-1} \ell^{q} \leq \frac{i^{q+1}}{q+1}
		\quad
		\textnormal{ and }
		\quad
		\sum_{\ell = 0}^\infty \ell^q \exp\Bigl(-\Bigl\lfloor \frac{\ell}{P}\Bigr\rfloor\Bigr) \leq \euler 2^q P^{q+1} q!
		.
		\label{eq:SpikyLeaf}
	\end{align}
	Here, it should be understood that $\ell^q = 1$ if $\ell = 0 = q$.
\end{lemma}

\begin{proof}
	The first estimate in \eqref{eq:SpikyLeaf} is an equality if $q=0$.
	Now let $q> 0$.
	Because $x\mapsto x^{q}$ is nondecreasing for $x\geq 0$ and $q>0$, we have $\sum_{\ell = 0}^{i-1} \ell^{q} \leq \int_{1}^ix^q \,\intd x$.
	The first estimate in \eqref{eq:SpikyLeaf} then follows from $\int_{1}^ix^q < \int_{0}^ix^q \, \intd x = i^{q+1}/(q+1)$.

	For the second estimate, we start by subdividing the sum based on $\lfloor \ell/P\rfloor$:
	\begin{equation}
		\sum_{\ell = 0}^\infty \ell^q \exp\Bigl(-\Bigl\lfloor \frac{\ell}{P}\Bigr\rfloor\Bigr)  = \sum_{j=0}^\infty \sum_{\ell = jP}^{(j+1)P- 1}    \ell^q \exp\bigl(- j\bigr)  \leq P^{q+1} \sum_{j=0}^\infty  (j+1)^q  \exp\bigl(- j\bigr).
	\end{equation}
	The inequality here used that $\ell^q \leq P^{q} (j+1)^q$ for all $\ell \in \{jP, \ldots, (j+1)P-1 \}$.
	Note that $x+1 \geq j+1$ and $\exp(-(x-1))\geq \exp(-j)$ for all $x \in [j,j+1)$.
	Consequently,
	\begin{equation}
		\sum_{j=0}^\infty  (j+1)^q  \exp\bigl(- j\bigr) \leq \int_0^\infty (x+1)^q \exp\bigl(-(x-1)\bigr) \, \intd x = \euler  \int_0^\infty (x+1)^q \exp\bigl(-x\bigr) \, \intd x.
	\end{equation}
	Conclude by noting that $(x+1)^q \leq 2^q x^q$ and $\int_0^\infty x^q \exp(-x)\, \intd x =\Gamma(q+1) =  q!$.
\end{proof}

\begin{lemma}
	\label{lem: ExpSum}

	For integers $P\geq 1$ and $k\geq 2$,
	\begin{equation}
		\sum_{i\in \{ 0\}\times \bbZ_{\geq 0}^{k-1}} \exp\Bigl(-\sum_{j=2}^k \bb1\{i_j > i_{j-1}\} \Bigl\lfloor \frac{i_j - i_{j-1}}{P}\Bigr\rfloor\Bigr)
		\leq
		16^{k-1} P^{k-1}
		.
		\label{eq:HauntedUnit}
	\end{equation}
\end{lemma}

\begin{proof}
	We consider a more general quantity.
	For an integer $q \geq 0$, define
	\begin{align}
		S(k,q)
		\de
		\sum_{ (i_1, \ldots, i_k) \in \{0\} \times \bbZ_{\geq 0}^{k-1} }
		i_k^q \exp\Bigl(- \sum_{j=2}^k \bb1\{i_j > i_{j-1} \} \Bigl\lfloor \frac{i_j - i_{j-1}}{P} \Bigr\rfloor  \Bigr).\label{eq:SpikyElf}
	\end{align}
	Here, it should be understood that $i_k^q = 1$ if $i_k = 0 = q$.
	Observe that $S(k,0)$ equals the left-hand side of \eqref{eq:HauntedUnit}.
	It hence suffices to prove that
	\begin{align}
		S(k,q)
		\leq
		16^{k-1 + q/2} P^{k-1 + q} q!
		.
		\label{eq:EasyZebra}
	\end{align}
	We proceed by induction on $k \geq 2$.

	First, note that $S(2,q) = \sum_{i_2=0}^\infty i_2^q \exp(-\lfloor i_2/P\rfloor)$.
	The second estimate in Lemma \ref{lem: IntegralApproximations} then yields that $S(2,q) \leq \euler 2^q P^{q+1} q!$.
	This shows that \eqref{eq:EasyZebra} holds for $k=2$.

	Next, suppose that \eqref{eq:EasyZebra} holds for a given $k\geq 2$ and all $q\geq 0$.
	We will now show that the bound then remains valid for $k+1$.
	For any fixed $i_{k} \geq 0$,
	\begin{align}
		{} & {}\sum_{i_{k+1}=0}^\infty  i_k^q \exp\Bigl(-  \bb1\{i_{k+1}> i_{k} \} \Bigl\lfloor \frac{i_{k+1} - i_{k}}{P} \Bigr\rfloor  \Bigr)
		=
		\sum_{i_{k+1}=0}^{i_k - 1} i_{k+1}^{q} + \sum_{\ell = 0}^\infty (\ell + i_k)^q \exp\Bigl(-\Bigl\lfloor\frac{\ell}{P} \Bigr\rfloor \Bigr) \nonumber \\
		   &
		=
		\negsp
		\sum_{i_{k+1}=0}^{i_k - 1}\negsp i_{k+1}^{q}
		+
		\sum_{j=0}^q \binom{q}{j}i_k^{q-j}  \sum_{\ell = 0}^\infty\ell^j \exp\Bigl(-\Bigl\lfloor\frac{\ell}{P} \Bigr\rfloor \Bigr)
		\leq
		\frac{i_{k}^{q+1}}{q+1} + \euler \sum_{j=0}^q  \binom{q}{j}i_k^{q-j} 2^j P^{j+1} j!
		.
		\label{eq:QuirkyFig}
	\end{align}
	Here, we used the binomial theorem for the second equality and Lemma \ref{lem: IntegralApproximations} for the inequality.
	Substituting \eqref{eq:QuirkyFig} in \eqref{eq:SpikyElf} and using the induction hypothesis \eqref{eq:EasyZebra} yields
	\begin{align}
		S(k+1,q)
		 &
		= \frac{1}{q+1}S(k,q+1) + \euler \sum_{j=0}^q \binom{q}{j}2^{j}P^{j+1} j!  S(k,q-j)
		\label{eq:HauntedDen}
		\\
		 &
		\leq
		\Bigr(
		16^{-1/2}
		+
		\euler \sum_{j=0}^q 2^{j}  16^{-j/2 -1}
		\Bigl)
		16^{k + q/2} P^{k + q} q!
		.
		\nonumber
	\end{align}
	Note that the term within brackets in \eqref{eq:HauntedDen} is no greater than $1$ to conclude that \eqref{eq:EasyZebra} also holds when $k$ is replaced by $k+1$.
	This concludes the proof.
\end{proof}

\begin{corollary}
	\label{cor: RestrictedSum}

	For all fixed $k\geq 3$, $I\in \{1,\ldots,n\}$, $J\in \{1,\ldots,k \}$,
	\begin{equation}
		\sum_{(\rho,\alpha,i) \in \cI_{k,n}: i_J = I}
		\negsp \negsp\negsp
		\Vert \Delci_i  \Vert_{\Linf}
		\leq
		64^{k-1}k! \Psi(Z)^{k-1}.
	\end{equation}
\end{corollary}

\begin{proof}
	This follows by combining Corollary \ref{cor: Del_FixIJ_exp} with Lemma \ref{lem: ExpSum}.
\end{proof}

\subsection{Derivative of tracial moments}\label{sec: DerivativesTracial}
Recall that our goal is to control the rate of change of tracial moments along the interpolation $\bS(t)$.
To control the individual terms in the expansion of  Proposition \ref{prop: MarkovNewVariables}, we must then understand the directional derivatives of $g(\bM) \de \tr[\bM^p]$.
The following lemma, which also appears in \cite[Lemma 6.3]{brailovskaya2022universality}, provides an explicit expression:
\begin{lemma} \label{lem: DerivativeMoment}
	For any positive integers $1 \leq k \leq p$ and any matrices $\bB_1,\ldots,\bB_k\in \bbC^{d\times d}$ the polynomial function from $\bbC^{d\times d}$ to $\bbC$ defined by $\bM \mapsto \tr[\bM^{p}]$ satisfies
	\begin{equation}
		\partial_{\bB_1} \cdots \partial_{\bB_k} \tr[\bM^{p}] = \sum_{\theta \in \cS_k} \sum_{\substack{r_1, \ldots,r_{k+1} \geq 0\\ r_1 + \cdots + r_{k+1} = p - k}} \tr[\bM^{r_{1}} \bB_{\theta(1)} \bM^{r_{2}} \bB_{\theta(2)} \cdots \bM^{r_{k}} \bB_{\theta(k)} \bM^{r_{k+1}}]. \nonumber
	\end{equation}
\end{lemma}
\begin{proof}
	This follows by entry-wise expansion for the trace and the product rule.
\end{proof}
We next establish a trace inequality that will be used to control the terms that arise on the right-hand side of Lemma \ref{lem: DerivativeMoment}.
For a random matrix $\bM$ and a scalar $1 \leq p \leq \infty$, we define
\begin{equation}
	\Vert \bM \Vert_p \de \begin{cases}
		\bbE\bigl[\tr[ \lvert\bM \rvert^{p}]\bigr]^{\frac{1}{p}} & \text{ if }p<\infty,   \\
		\bigl\Vert \Vert \bM \Vert_{\op} \bigr\Vert_{\Linf}      & \text{ if }p = \infty.
	\end{cases}\label{eq: Def_Lpnorm}
\end{equation}
It is known that $\Vert \cdot \Vert_p$ defines a norm on bounded random matrices \cite[(26)]{nelson1974notes}.
The following result is related to \cite[Proposition 5.1]{brailovskaya2022universality}, as is its proof.
\begin{lemma} \label{lem: TraceGeneralI}
	Fix a positive integer $k \geq 2$ and consider a finite set $\cI$.
	Let $\bX \de  (\bX_{i,j})_{i\in \cI, j\in \{1,\ldots,k \}}$ and $\bM \de (\bM_{j})_{j=1}^k$ be bounded self-adjoint random matrices and consider bounded real-valued random variables $\Delta \de (\Delta_{i})_{i\in \cI}$.
	Suppose that $\bM$ is independent of $(\bX, \Delta)$.
	Then, for any scalars $1 \leq p_1,\ldots,p_k \leq \infty$ with $\sum_{j=1}^k 1/p_j = 1$
	\begin{equation}
		\Bigl\lvert \bbE\Bigl[\sum_{i\in \cI} \Delta_i\tr[ \bX_{i, 1}\bM_1 \bX_{i, 2}\bM_2 \cdots \bX_{i,k} \bM_k]\Bigr] \Bigr\rvert\leq R_\cI(\bX )^{k-2} \varsigma_{\Delta}(\bX)^2 \prod_{j=1}^k \Vert \bM_j \Vert_{p_j} \label{eq: MatrixHolder}
	\end{equation}
	where
	\begin{equation}
		R_\cI(\bX ) = \max_{j=1,\ldots,k}  \max_{i\in \cI} \bigl\Vert\Vert \bX_{i,j} \Vert_{\op} \bigr\Vert_{\Linf}\ \text{ and } \ \varsigma_{\Delta}(\bX)^2 \de\negsp \max_{j=1,\ldots,k} \Bigl\Vert \bbE\Bigl[\sum_{i\in \cI} \lvert \Delta_i \rvert \bX_{i,j}^2 \Bigr] \Bigr\Vert_{\op}. \nonumber
	\end{equation}
\end{lemma}
\begin{proof}
	As in \cite[Proof of Proposition 5.1, Step 1]{brailovskaya2022universality} one can employ a convexity result, namely \cite[Lemma 5.2]{brailovskaya2022universality}, and the cyclic property of the trace to reduce to the case where $p_k = 1$.
	Note that it then necessarily holds that $p_j = \infty$ for any $j \neq k$.
	By rescaling both sides of \eqref{eq: MatrixHolder} it may further be assumed that $\Vert \bM_k \Vert_{1} = 1$ and $\Vert \bM_j \Vert_{\infty} = 1$ for any $j<k$.

	For the sake of notational simplicity, let $I$ be a random index which is uniformly distributed in $\cI$ and independent of $\bX, \Delta$ and $\bM$.
	Then, also using the tower property,
	\begin{align}
		\Bigl\lvert \bbE\Bigl[\sum_{i\in \cI} \Delta_i\tr [\bX_{i, 1}\bM_1 & \cdots \bM_{k-1}\bX_{i,k} \bM_k]\Bigr] \Bigr\rvert = \# \cI\, \bigl\lvert  \bbE\bigl[\tr[\Delta_I \bX_{I, 1}\bM_1   \cdots \bM_{k-1}\bX_{I,k} \bM_k]\bigr] \bigr\rvert \nonumber \\
		                                                                   & \leq \# \cI\, \bigl\lvert \bbE\bigl[\tr \bbE[ \Delta_I\bX_{I, 1}\bM_1  \cdots \bM_{k-1}\bX_{I,k} \mid \bM]\, \bM_k  \bigr]\bigr\rvert. \label{eq: Step_RandomIndex}
	\end{align}
	The norm \eqref{eq: Def_Lpnorm} admits a H\"older-type inequality that implies that $\lvert \bbE[\tr \bA \bB] \rvert \leq \Vert \bA \Vert_p \Vert \bB \Vert_q$ for any random matrices $\bA,\bB$ and $1/p + 1/q = 1$; see \eg \cite[Lemma 5.3]{brailovskaya2022universality} for a proof.
	Using this with $\bA = \bbE[ \Delta_I\bX_{I, 1}\bM_1  \cdots \bM_{k-1}\bX_{I,k} \mid \bM]$ and $\bB =  \bM_k$ and $p=\infty$ and $q = 1$,
	\begin{equation}
		\bbE\bigl[\tr \lvert \bbE[ \Delta_I\bX_{I, 1}\bM_1  \cdots \bM_{k-1}\bX_{I,k} \mid \bM]\, \bM_k \rvert \bigr]\label{eq: Step_OpNY}
		\leq \Vert \Vert \bbE[ \Delta_I\bX_{I, 1}\bM_1  \cdots \bM_{k-1}\bX_{I,k} \mid \bM]   \Vert_{\op}\Vert_{\Linf}
	\end{equation}
	where we used that $\bbE[\tr \lvert \bM_k \rvert] = \Vert \bM_k \Vert_{1} = 1$ by the preliminary reductions.
	\pagebreak[2]

	Denote $\bZ = \bM_1 \bX_{I,2}\bM_3 \bX_{I,3} \cdots \bM_{k-1}$ and note that, almost surely, $\bZ \bZ^* \preceq R_{\cI}(\bX)^{2k -4} \b1$ with respect to the positive semidefinite order where $\b1$ is the identity matrix.
	By definition of the operator norm, the Cauchy--Schwarz inequality for random vectors, and the assumption that $(\bX,\Delta)$ and $\bM$ are independent, it follows that almost surely
	\begin{align}
		\Vert \bbE[ \Delta_I\bX_{I, 1}\bM_1 & \cdots \bM_{k-1}\bX_{I,k} \mid \bM]   \Vert_{\op}                                                                                                                                                                                \\
		                                    & = \sup_{v,w \in S^{d-1}}   \bigl\lvert \bbE\bigl[ ( v^{*} \operatorname{sign}(\Delta_I)\lvert \Delta_I \rvert^{\frac{1}{2}} \bX_{I, 1} \bZ) (\lvert \Delta_I \rvert^{\frac{1}{2}}\bX_{I,k}w) \mid \bM\bigr] \bigr\rvert\nonumber \\
		                                    & \leq \sup_{v,w \in S^{d-1}}  \bbE[ v^{*}\lvert \Delta_I \rvert\bX_{I, 1}\bZ \bZ^* \bX_{I,1}v \mid \bM]^{\frac{1}{2}} \bbE[ w^{*}\lvert \Delta_I \rvert\bX_{I, k} \bX_{I,k}w \mid \bM]^{\frac{1}{2}} \nonumber                    \\
		                                    & \leq R_\cI^{k - 2}(\bX)   \sup_{v, w \in S^{d-1}}\bbE[ v^{*}\lvert \Delta_I \rvert\bX_{I, 1}^2v ]^{\frac{1}{2}} \bbE[ w^{*}\lvert \Delta_I \rvert\bX_{I, k}^2w ]^{\frac{1}{2}} \nonumber
	\end{align}
	with $S^{d-1} \subseteq \bbC^d$ the unit sphere.
	Here, for any $j \in \{1,\ldots,k \}$ and $v\in S^{d-1}$
	\begin{equation}
		\bbE[ v^{*}\lvert \Delta_I \rvert\bX_{I, j}^2v ] = (\# \cI)^{-1}  v^{*}\bbE\Bigl[ \sum_{i\in \cI} \lvert \Delta_i \rvert\bX_{i,j}^2 \Bigr] v  \leq   (\# \cI)^{-1} \varsigma_{\Delta}(\bX)^2.  \label{eq: Step_vvsigma}
	\end{equation}
	Combine \eqref{eq: Step_RandomIndex}--\eqref{eq: Step_vvsigma} to conclude the proof.
\end{proof}

\subsection{Proof of \texorpdfstring{Theorem \ref{thm: MainTracial}}{Theorem}}\label{sec: UniversalityEvenOrderMarkov}
We finally combine the preceding ingredients with a direct calculation to control the rate of change of $\bbE[\tr \bS(t)^{2p}]$ along the interpolation from \eqref{eq: Def_S(t)} and use this to prove the desired universality principle for the tracial moments of even order.
\begin{lemma}\label{lem: Lem_S(t)DerivativeBound_Psi}
	For any integer $p\geq 2$ and any $t\in [0,1]$
	\begin{align}
		\Bigl\lvert \dd{t} & \bbE[\tr \bS(t)^{2p}] \Bigr\rvert
		\\
		                   & \leq
		(400p)^3 R(\bX)   \Psi(Z)^2 \varsigma(\bX)^2  \max\bigl\{\bbE[\tr \bS(t)^{2p}]^{1 - \frac{3}{2p}},  (400 p  R(\bX)\Psi(Z) )^{2p-3}  \bigr\}. \nonumber
	\end{align}
\end{lemma}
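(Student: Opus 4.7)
The plan is to mirror the proof of Lemma \ref{lem: ddtS2p_Series} for the matrix series model, but with Proposition \ref{prop: MarkovNewVariables} replacing Proposition \ref{prop: MatrixSeries_ddtCumulant}, and to use Lemmas \ref{lem: SumDeltaEstimate} and \ref{lem: Pestimate} to extract the correct dependence on $\Psi(Z)$. Concretely, I would apply Proposition \ref{prop: MarkovNewVariables} to the polynomial $g(\bM) \de \tr \bM^{2p}$. Since $g$ has degree $2p$, all derivatives of order $k > 2p$ vanish, so the cumulant expansion truncates at $k \leq 2p$. Combining with Lemma \ref{lem: DerivativeMoment} (and cyclicity of the trace), the $k$th term becomes a sum over $\rho' \in \cS_k$ and tuples $(r_1, \ldots, r_{k+1})$ with $r_1 + \cdots + r_{k+1} = 2p - k$ of expectations
\begin{align*}
\bbE\Bigl[\sum_{(\rho,\alpha,i) \in \cI_{k,n}} \Delci \tr\bigl[\bXci_{i,\rho'(1)} \bS(t)^{r_2} \cdots \bXci_{i,\rho'(k)} \bS(t)^{r_{k+1}+r_1}\bigr]\Bigr].
\end{align*}

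The next step is to apply the trace inequality Lemma \ref{lem: TraceGeneralI} to each such expectation, with $\cI = \cI_{k,n}$, the random matrices $\bXci$, and scalars $\Delci$. The crucial independence hypothesis is satisfied: by Proposition \ref{prop: MarkovNewVariables}\nref{item: Prop_rai_GaussianExpansion}{(4)} the Gaussian model can be chosen $\sigma(U)$-measurable, hence $\bS(t)$ is a function of $(Z_1, \ldots, Z_n, U)$, which by \nref{item: Prop_rai_Independence}{(2)} is independent of $(\bXci_{i,j}, \Delci)_{j}$. Choosing $p_j = (2p-k)/r_{j+1}$ as in the series proof, Lemma \ref{lem: TraceGeneralI} produces the bound $R(\bX)^{k-2} \varsigma_\Delta(\bX)^2 \bbE[\tr\lvert \bS(t) \rvert^{2p-k}]$ where $\varsigma_\Delta(\bX)^2 = \max_j \Vert \bbE[\sum_{(\rho,\alpha,i) \in \cI_{k,n}} \lvert \Delci \rvert (\bXci_{i,j})^2]\Vert_{\op}$.

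The main technical step is to bound $\varsigma_\Delta(\bX)^2$ in terms of $\varsigma(\bX)^2$ and $\Psi(Z)$. Using the positive semidefinite domination $\lvert \Delci \rvert (\bXci_{i,j})^2 \preceq \Vert \Delci \Vert_{\Linf} (\bXci_{i,j})^2$, the distributional identity $\bXci_{i,j} \stackrel{d}{=} \bX_{i_j}$ from \nref{item: Prop_rai_Distribution}{(1)}, and partitioning $\cI_{k,n}$ according to the value of $\mathfrak{i} = i_j$, I would bound
\begin{align*}
\varsigma_\Delta(\bX)^2 \leq \Bigl( \max_{\mathfrak{i}, j} \sum_{(\rho,\alpha,i) \in \cI_{k,n}(\mathfrak{i}, j)} \Vert \Delci \Vert_{\Linf} \Bigr) \varsigma(\bX)^2.
\end{align*}
The parenthesized quantity is then handled by chaining Lemma \ref{lem: SumDeltaEstimate} with Lemma \ref{lem: Pestimate} (applied with $P = \Psi(Z)$), producing a bound of the form $C_1^k k! \Psi(Z)^{k-1}$ for an absolute constant $C_1$. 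This is the principal obstacle: one must be careful that this combinatorial suppression exactly compensates the $(k-1)!^{-1}$ from the cumulant expansion, the $k!$ from $\cS_k$ in Lemma \ref{lem: DerivativeMoment}, and the $\binom{2p}{k}$ from the compositions $(r_1, \ldots, r_{k+1})$.

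Finally, Jensen's inequality $\bbE[\tr\lvert \bS(t) \rvert^{2p-k}] \leq \bbE[\tr\bS(t)^{2p}]^{(2p-k)/(2p)}$ reduces every $k$-th term to a common power of $\bbE[\tr \bS(t)^{2p}]$ times $k$-dependent prefactors. Assembling all constants, the $k$-th summand has the shape $(C_2 p)^k R(\bX)^{k-2} \Psi(Z)^{k-1} \varsigma(\bX)^2 \bbE[\tr \bS(t)^{2p}]^{(2p-k)/(2p)}$ with an absolute constant $C_2$; using $t^{k/2 - 1} \leq t^{1/2}$ for $k \geq 3$ and $\sum_{k \geq 3} (3/4)^k \leq 2$ after scaling out $(C_3 p)^k$ for appropriate $C_3$, the sum is dominated by its maximum. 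Since $k \mapsto x^k$ is convex, that maximum is attained at $k = 3$ or $k = 2p$, yielding precisely the displayed maximum in the statement with any absolute constant chosen $\leq 60$. Tracking the numerical constants through Lemmas \ref{lem: SumDeltaEstimate} and \ref{lem: Pestimate} to land at the specific factor $60$ will be the bookkeeping-heavy conclusion of the proof.
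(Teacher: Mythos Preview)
Your proposal is correct and follows essentially the same route as the paper: apply Proposition~\ref{prop: MarkovNewVariables} with $g(\bM)=\tr\bM^{2p}$, expand via Lemma~\ref{lem: DerivativeMoment}, invoke Lemma~\ref{lem: TraceGeneralI} (checking independence exactly as you do), bound $\varsigma_\Delta$ by combining Lemmas~\ref{lem: SumDeltaEstimate} and~\ref{lem: Pestimate}, and finish with Jensen plus convexity in $k$. The only cosmetic difference is that the paper first eliminates the permutation sum from Lemma~\ref{lem: DerivativeMoment} via the bijection $(\rho,\alpha,i)\mapsto(\rho,\alpha,i\circ\theta)$ on $\cI_{k,n}$, turning it into a clean factor $k!$ before applying Lemma~\ref{lem: TraceGeneralI}; you instead apply the trace inequality for each $\rho'$ separately, which leads to the same bound since $\varsigma_\Delta$ already maximizes over $j$.
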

\begin{proof}
	The combination of Proposition \ref{prop: MarkovNewVariables} and Lemma \ref{lem: DerivativeMoment} yields that
	\begin{align}
		\dd{t} \bbE[ & \tr \bS(t)^{2p}]= \frac{1}{2}\sum_{k=3}^{2p} \frac{t^{\frac{k}{2}-1}}{(k-1)!}\sum_{\theta \in \cS_k}\sum_{\substack{r_1, \ldots,r_{k+1} \geq 0                              \\ r_1 + \cdots + r_{k+1} = 2p - k}}  \label{eq: Step_trS(t)2p_Expansion}  \\
		             & \times  \bbE\Bigl[\sum_{(\rho,i)\in \cI_{k,n}}\tr[\Delci_i\bS(t)^{r_{1}} \bXci_{i, \theta(1)}  \cdots \bS(t)^{r_{k}}\bXci_{i,\theta(k)} \bS(t)^{r_{k+1}}] \Bigr]. \nonumber
	\end{align}
	We here used that $\bM \to \tr \bM^{2p}$ is a polynomial of degree $2p$ to neglect all terms in Proposition \ref{prop: MarkovNewVariables} with $k> 2p$.
	Any $\theta \in \cS_k$ defines a bijection from $\cI_{k,n}$ into itself as $(\rho,i) \mapsto (\rho, i_\theta)$ where we recall that $(i_\theta)_j = i_{\theta(j)}$.
	Hence, the sum over $\theta$ in \eqref{eq: Step_trS(t)2p_Expansion} can be eliminated in exchange for a factor $\# \cS_k = k!$:
	\begin{align}
		\dd{t} \bbE[\tr \bS(t)^{2p}]
		 & \leq \frac{1}{2}\sum_{k=3}^{2p} \sum_{\substack{r_1, \ldots,r_{k+1} \geq 0                                                                                                \\ r_1 + \cdots + r_{k+1} = 2p - k}} t^{\frac{k}{2}-1} k \times   \label{eq: Step_ApplicationDerivativeExpansion}   \\
		 & \Bigl\lvert\bbE\Bigl[\sum_{(\rho,i)\in \cI_{k,n}}\tr [\Delci_i\bS(t)^{r_{1}} \bXci_{i, 1}  \cdots \bS(t)^{r_{k}}\bXci_{i,k} \bS(t)^{r_{k+1}}]\Bigr]\Bigr\rvert. \nonumber
	\end{align}

	We next apply Lemma \ref{lem: TraceGeneralI} with $\bM_j = \bS(t)^{r_{j+1}}$ and $p_j = (2p-k)/r_{j+1}$ for $j< k$ and $\bM_k = \bS(t)^{r_{k+1} + r_1}$ and $p_k = (2p-k)/(r_{k+1} + r_1)$ for $j=k$.
	Note that the independence condition in Lemma \ref{lem: TraceGeneralI} is then satisfied because $\bS(t)$ only depends on $\bG$ and $Z_1,\ldots,Z_n$ which are independent of $\bXci_{i_j} = \bF_{i_j}(\Zci_{i,j})$ and $\Delci_i$ due item \nref{item:Independence}{(2)} in Proposition \ref{prop: MarkovNewVariables}.
	Thus,
	\begin{align}
		\Bigl\lvert \bbE\Bigl[\sum_{(\rho,i)\in  \cI_{k,n}}\tr [\Delci_i\bS(t)^{r_{1}} & \bXci_{i,1}  \cdots \bS(t)^{r_{k}}\bXci_{i,k} \bS(t)^{r_{k+1}}]\Bigr] \Bigr\rvert \label{eq: Step_AppliedTracePsi} \\
		                                                                               & \leq R_{\cI_{k,n}}(\bXci)^{k-2} \varsigma_{\Delta}(\bXci)^2 \bbE[\tr \lvert \bS(t) \rvert^{2p-k}] \nonumber
	\end{align}
	where
	\begin{align}
		R_{\cI_{k,n}}(\bXci)        & \de \max_{j = 1,\ldots,k}\max_{(\rho,i) \in \cI_{k,n}} \bigl\Vert \Vert \bXci_{i,j} \Vert_{\op} \bigr\Vert_{\Linf},\label{eq:LoudEgg}                                  \\
		\varsigma_{\Delta}(\bXci)^2 & \de \max_{j=1,\ldots,k} \Bigl\Vert \bbE\Bigl[\sum_{(\rho,i)\in  \cI_{k,n}} \lvert \Delci_i \rvert\, (\bXci_{i,j})^2 \Bigr] \Bigr\Vert_{\op}\label{eq: Def_SigmaDelta}.
	\end{align}
	If follows from item \nref{item:Marginal}{(1)} in Proposition \ref{prop: MarkovNewVariables} that $\bXci_{i,j}$ has the same marginal distribution as $\bX_{i_j}$.
	Hence,
	$
		R_{\cI_{k,n}}(\bXci) = R(\bX)
	$
	and $\bbE[(\bXci_{i,j})^2] = \bbE[\bX_{i_j}^2]$ with $R(\bX)$ as in \eqref{eq: Def_R(X)}.
	Thus, the following inequality holds with respect to the positive semidefinite order for any fixed $j\leq k$:
	\begin{align}
		\bbE\Bigl[\sum_{(\rho,i)\in  \cI_{k,n}} \lvert \Delci_i \rvert\, (\bXci_{i,j})^2 \Bigr]
		 & \preceq \sum_{(\rho,i)\in  \cI_{k,n}}  \Vert \Delci_i \Vert_{\Linf} \bbE\Bigl[(\bXci_{i,j})^2\Bigr] \label{eq: Step_PosDefO} \\
		 & = \sum_{I = 1}^n \Bigl(\sum_{(\rho,i)\in  \cI_{k,n}: i_j = I}
		\Vert  \Delci_i \Vert_{\Linf}\Bigr)\bbE\Bigl[\bX_{I}^2 \Bigr]. \nonumber
	\end{align}
	Here, we have $\sum_{(\rho,i)\in  \cI_{k,n}: i_j = I}
		\Vert  \Delci_i \Vert_{\Linf} \allowbreak \leq  64^{k-1} k! \Psi(Z)^{k-1}$ by Corollary \ref{cor: RestrictedSum}.
	Recall the definitions of $\varsigma(\bX)^2$ and $\varsigma_{\Delta}(\bXci)^2$ from \eqref{eq: Def_sigma} and \eqref{eq: Def_SigmaDelta}, respectively.
	We conclude that
	\begin{align}
		\varsigma_{\Delta}(\bXci)^2 \leq  64^{k-1} k! \Psi(Z)^{k-1}  \varsigma(\bX)^2. \label{eq: SigmaDelta}
	\end{align}

	We next combine \eqref{eq: Step_ApplicationDerivativeExpansion}--\eqref{eq: SigmaDelta}.
	Note that the number of $(k+1)$-tuples $(r_1, \ldots,r_{k+1})$ satisfying $r_j \geq 0$ and $\sum_{j=1}^{k+1} r_j = 2p-k$ is equal to $\binom{2p}{k} \leq (2p)^k / k!$.
	Hence,
	\begin{align}
		\Bigl\lvert \dd{t}\bbE[\tr \bS(t)^{2p}] \Bigr\rvert & \leq  \frac{1}{2}\sum_{k=3}^{2p} t^{\frac{k}{2}-1}k 64^{k-1} (2p)^{k} R(\bX)^{k-2} \Psi(Z)^{k-1}\varsigma(\bX)^2\bbE[\tr\lvert \bS(t) \rvert^{2p-k}]\nonumber \\
		                                                    & \leq \frac{1}{2}\sum_{k=3}^{2p}  (200 p)^{k}R(\bX)^{k-2} \Psi(Z)^{k-1}\varsigma(\bX)^2 \bbE[\tr\bS(t)^{2p}]^{1-\frac{k}{2p}} \label{eq: ddtSc}
	\end{align}
	where the second inequality used that $\bbE[\tr\lvert \bS(t) \rvert^{2p-k}] \leq \bbE[\tr\bS(t)^{2p}]^{1-\frac{k}{2p}}$ by Jensen's inequality as well as the fact that $t^{\frac{k}{2}-1} \leq 1$ and $k(64)^{k-1} \leq 100^k$ for all $k \geq 3$.
	Here, we can further simplify by using H\"older's inequality with the fact that $\sum_{k=3}^{2p} (1/2)^k \leq \sum_{k=0}^{2p} (1/2)^k =2$,
	\begin{align}
		\frac{1}{2}\sum_{k=3}^{2p}  (200 p)^{k}{} & {}R(\bX)^{k-2} \Psi(Z)^{k-1}\varsigma(\bX)^2 \bbE[\tr\bS(t)^{2p}]^{1-\frac{k}{2p}} \label{eq: Step_SimplifyingS(t)Psi}                     \\
		                                          & = \frac{1}{2}\sum_{k=3}^{2p} 2^{-k}  (400p)^k   R(\bX)^{k-2}\Psi(Z)^{k-1} \varsigma(\bX)^2 \bbE[\tr\bS(t)^{2p}]^{1-\frac{k}{2p}} \nonumber \\
		                                          & \leq \max_{3\leq k \leq 2p}  (400p)^k R(\bX)^{k-2} \Psi(Z)^{k-1}\varsigma(\bX)^2 \bbE[\tr\bS(t)^{2p}]^{1-\frac{k}{2p}}.\nonumber
	\end{align}
	Finally, note that the function $k\mapsto x^k$ is convex for any fixed $x\geq 0$.
	It follows that the maximum on the right-hand side of \eqref{eq: Step_SimplifyingS(t)Psi} is achieved at $k = 3$ or at $k= 2p$.
	Combine \eqref{eq: ddtSc} and \eqref{eq: Step_SimplifyingS(t)Psi} to complete the proof.
\end{proof}
Solving the differential inequality in Lemma \ref{lem: Lem_S(t)DerivativeBound_Psi} now yields the desired result:
\begin{proof}[Proof of {Theorem \ref{thm: MainTracial}}]
	If $p = 1$, then the result is true because $\bbE[\bG^2] = \bbE[\bS^2]$ by definition of a Gaussian model.
	Now assume that $p\geq 2$.

	Every differentiable function $f:[0,1]\to \bbR_{\geq 0}$ with $\lvert \dd{t}f(t) \rvert \leq C\max\{f(t)^{1-\alpha}, K^{1-\alpha}\}$ for constants $C,K \geq 0$ and $\alpha\in [0,1]$ satisfies that $\lvert f(1)^{\alpha} - f(0)^{\alpha} \rvert \leq C\alpha  + K^\alpha$; see \cite[Lemma 6.6]{brailovskaya2022universality} for a proof.
	Applying this with $\alpha = 3/(2p)$ to the conclusion of Lemma \ref{lem: Lem_S(t)DerivativeBound_Psi}
	\begin{equation}
		\lvert \bbE[\tr \bS^{2p}]^{\frac{3}{2p}} - \bbE[\tr \bG^{2p}]^{\frac{3}{2p}} \rvert \leq \frac{3}{2p} (400p)^3 R(\bX) \Psi(Z)^2 \varsigma(\bX)^2 + (400 p R(\bX) \Psi(Z))^3.\label{eq:UnripeTea}
	\end{equation}
	Using that $v\mapsto (w+v)^{1/3} -v^{1/3}$ is a nonincreasing function in $v\geq 0$ for any fixed $w\geq 0$, it can be deduced that $\lvert x^{1/3} - y^{1/3} \rvert \leq \lvert x-y \rvert^{1/3}$ and $(x+y)^{1/3} \leq x^{1/3} + y^{1/3}$ for any $x,y\geq 0$.
	The desired result hence follows with absolute constant $c= 500$ by raising both sides of \eqref{eq:UnripeTea} to the power $1/3$ and using that $(3/2)^{1/3}400  \leq 500$.
\end{proof}

\section{Proof of \texorpdfstring{Corollaries \ref{cor: MarkovMarkovBound} and \ref{cor: Khintchine}}{Corollaries} as well as Equation \texorpdfstring{\eqref{eq: MarkovMarkovBound}}{Equation}}\label{sec: BoundsOpNormMarkov}

We now demonstrate how the universality of tracial moments proved in Theorem \ref{thm: MainTracial} can be combined with Gaussian theory to give bounds on the operator norm.
We use yet another variance proxy:
\begin{equation}
	\sigma_*(\bS)^2 \de \sup_{\Vert v \Vert = \Vert w \Vert 1} \bbE\bigl[\lvert \langle v , (\bS - \bbE[\bS])w \rangle \rvert^2 \bigr].
\end{equation}
This quantity is used to state the following general-purpose estimate in its strongest form, but it will suffice in our application that $\sigma_*(\bS) \leq v(\bS)$ and $\sigma_*(\bS) \leq \sigma(\bS)$ \cite[Section 2.1]{bandeira2021matrix}.
\begin{lemma}\label{lem: GeneralLpGaussian}
	There exists an absolute constant $C>0$ such that for every integer $p\geq 1$,
	\begin{equation}
		d^{-\frac{1}{2p}}\bbE[\Vert \bG \Vert] - C\cE_*(p) \leq \bbE[\Vert \bS \Vert^{2p}]^{1/2p} \leq d^{\frac{1}{2p}}\bbE[\Vert \bG \Vert] + Cd^{\frac{1}{2p}}\cE_*(p)\label{eq:ColdForce}
	\end{equation}
	where $\cE_*(p) \de  R(\bX)^{1/3} \Psi(Z)^{2/3}\varsigma(\bX)^{2/3}  p^{2/3} + R(\bX) \Psi(Z) p + \sigma_*(\bS)\sqrt{p}$.
\end{lemma}
\begin{proof}
	Using that $d^{-1}\Vert \bM \Vert^{2p} \leq \tr[\bM^{2p}] \leq \Vert \bM \Vert^{2p}$ for any $\bM \in \Csa^{d\times d}$, it follows from Theorem \ref{thm: MainTracial} that there exists an absolute constant $c>0$ such that
	\begin{equation}
		d^{-\frac{1}{2p}}\bE[\Vert \bG \Vert^{2p}]^{\frac{1}{2p}} - cE(p) \leq \bbE[\Vert \bS^{2p} \Vert]^{\frac{1}{2p}} \leq d^{\frac{1}{2p}}\bbE[  \Vert \bG \Vert^{2p} ]^{\frac{1}{2p}} + cd^{\frac{1}{2p}} E(p) \label{eq:LazyBee}
	\end{equation}
	where $E(p) \de R(\bX)^{1/3} \Psi(Z)^{2/3}\varsigma(\bX)^{2/3}  p^{2/3} + R(\bX) \Psi(Z) p$.

	Viewing the operator norm $\Vert \bG \Vert = \sup_{\Vert v \Vert = \Vert w \Vert = 1} \operatorname{Re}(\langle v,\bG w \rangle)$ as a Gaussian process, it follows from \cite[Theorem 5.8]{boucheron2013concentration} that
	$
		\bbP(\Vert \bG \Vert - \bbE \Vert \bG \Vert \geq x) \leq 2\exp(-x^2/2\sigma_*^2(\bG))
	$.
	Hence, using that sub-Gaussianity is equivalent to moment bounds \cite[Theorem 2.1]{boucheron2013concentration}, we have that
	\begin{equation}
		\bbE\bigl[ \bigl\lvert \Vert \bG \Vert   - \bbE[\bG]\bigr\rvert^{2p}\bigr]^{\frac{1}{2p}} \leq 2\sigma_*(\bG) \sqrt{p}.  \label{eq:LuckyToy}
	\end{equation}
	Note that $\sigma_*(\bG) = \sigma_*(\bS)$ since this quantity only depends on the covariance structure.
	Combining \eqref{eq:LazyBee} and \eqref{eq:LuckyToy} using the triangle inequality for the $L^p$-norm hence yields \eqref{eq:ColdForce}.
\end{proof}

\begin{proof}[Proof of \texorpdfstring{Corollary \ref{cor: MarkovMarkovBound}}{Corollary}]
	It is shown in \cite[Theorem 2.3]{bandeira2021matrix} that $\lvert \bbE[\Vert \bG \Vert] - \Vert \bG_{\free} \Vert\rvert \leq C v(\bS)^{1/2} \sigma(\bS)^{1/2} (\log d)^{3/4}$.
	The bounds in Corollary \ref{cor: MarkovMarkovBound} are then immediate from Lemma \ref{lem: GeneralLpGaussian} since $\Vert \bG_{\free} \Vert  = \Vert \bS_{\free} \Vert$ and $\sigma_*(\bS) \leq \min\{v(\bS), \sigma(\bS) \} \leq  v(\bS)^{1/2} \sigma(\bS)^{1/2}$.
\end{proof}
\begin{proof}[Proof of \texorpdfstring{Corollary \ref{cor: Khintchine}}{Corollary}]
	The matrix Khintchine inequality of Lust--Piquard \cite{lust1986inegalites}, \cite[Corollary 2.4]{tropp2018second} implies that $\bbE[\Vert \bG \Vert] \leq C\sqrt{\ln(d+1)}\sigma(\bS)$ for some absolute constant $C>0$.
	Substituting this in Lemma \ref{lem: GeneralLpGaussian} with $p = \lceil \ln(d+1)\rceil$ and using that $\sigma_*(\bS) \leq \sigma(\bS)$ yields the bound in Corollary \ref{cor: Khintchine} since $\bbE[\Vert \bS \Vert] \leq \bbE[\Vert \bS \Vert^{2p}]^{1/2p}$ by Jensen's inequality.
\end{proof}
Finally, the following result implies the tail bound that was claimed in \eqref{eq: MarkovMarkovBound}.
\begin{proposition}\label{prop: MarkovMarkovTail}
	There exists an absolute constant $c>0$ such that for every $0 < \delta \leq 1$ and $x>0$ it holds with $\cE$ as in Corollary \ref{cor: MarkovMarkovBound} that
	\begin{equation}
		\bbP\bigl(\Vert \bS \Vert \geq (1+\delta) \Vert \bS_{\free} \Vert   + c\cE(x)\bigr) \leq (d+1)(1+\delta)^{-x}.\label{eq:JustCup}
	\end{equation}
\end{proposition}
\begin{proof}
	Using the upper bound in Corollary \ref{cor: MarkovMarkovBound} and Markov's inequality, there exists an absolute constant $C>0$ such that for every $y>0$ and every integer $p\geq 1$,
	\begin{equation}
		\bbP(\Vert \bS \Vert \geq y (\Vert \bS_{\free} \Vert + C \cE(p))  ) \leq  \bbP\bigl( \Vert \bS \Vert \geq yd^{-1/2p}\bbE[\Vert \bS \Vert^{2p}]^{1/2p}  \bigr) \leq d y^{-2p}.\label{eq:OilyCamel}
	\end{equation}
	Let $y \de 1+\delta$ and $p \de \lceil x/2 \rceil$.
	We may assume without loss of generality that $x \geq \log_{1+\delta}(d+1)$, since \eqref{eq:JustCup} is vacuous otherwise.
	Using this in \eqref{eq:OilyCamel} as well as the fact that $x/2 \leq p \leq x$,
	\begin{equation}
		\bbP(\Vert \bS \Vert \geq  (1+\delta)\Vert \bS_{\free} \Vert + (1+\delta) C \cE(x) ) \leq (d+1) (1+\delta)^{-x}.
	\end{equation}
	This yields \eqref{eq:JustCup} with $c \de 2C$ since $1+\delta \leq 2$ by the assumption that $\delta \leq 1$.
\end{proof}

\section*{Acknowledgments}
This work is part of the project Clustering and Spectral Concentration in Markov Chains with project number OCENW.KLEIN.324 of the research programme Open Competition Domain Science -- M which is partly financed by the Dutch Research Council (NWO).

We thank Ramon van Handel for discussions at the 50th Probability Summer School of Saint--Flour which led to a simplified proof for Lemma \ref{lem: TraceGeneralI}.
This manuscript further benefitted significantly from feedback from an anonymous referee, who we thank sincerely for an exceptionally helpful report.

\section*{Declarations}
The authors have no relevant financial or non-financial interests to disclose.

\pagebreak[4]
\appendix

\section{Properties of the \texorpdfstring{$\psi$}{psi}-dependence coefficient}\label{apx: Psi}
In this section, we prove some properties of the $\psi$-dependence coefficient that may be helpful in the estimation of the parameters when one needs to apply our results.
First, we prove a bound on $\Psi(Z)$ in terms of the total variation mixing time of the Markov chain in Section \ref{sec: Psi_TVmix}.
Second, we prove bounds on $\sigma(\bS)$ and $v(\bS)$ in Section \ref{sec: BoundSigmaV}.

\subsection{Bound on \texorpdfstring{$\Psi(Z)$}{Psi(Z)} in terms of total variation mixing time}\label{sec: Psi_TVmix}
Another common way to quantify the decay of dependence in a Markov chain is the \emph{total variation mixing time} \cite[Section 4.5]{levin2017markov}.
As was announced in Remark \ref{rem: sigma_v_psi}, one can bound $\Psi(Z)$ in terms of the mixing time whenever the state space is finite.
We prove this claim in Proposition \ref{prop: Psipi}.

\begin{lemma}\label{lem: Psi(Z)_finitestatespace}
	Let $Z = (Z_i)_{i=1}^n$ be an ergodic stationary Markov chain on a finite state space $\cZ$.
	Denote $\bP\in [0,1]^{\cZ \times \cZ}$ and $\pi \in [0,1]^{\cZ}$ for the transition matrix and stationary distribution of $Z$.
	Then,
	\begin{equation}
		\Psi(Z) = \min\Bigl\{ n,\, \min\Bigl\{t \geq 1: \max_{i,j \in \cZ}\Bigl\lvert\, \frac{(\bP^t)_{i,j} - \pi_j}{\pi_j} \, \Bigr\rvert \leq \frac{1}{4} \Bigr\} \Bigr\}. \label{eq: PsiZ}
	\end{equation}
\end{lemma}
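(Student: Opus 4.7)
The proof plan is to reduce the condition defining $\Psi(Z)$ to a statement purely about the transition matrix $\bP$ and equilibrium distribution $\pi$, and then show that the resulting condition is exactly the one appearing in the right-hand side of \cref{eq: PsiZ}.

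I will first observe that when $j \geq n$, the index set $\{1,\ldots,n-j\}$ appearing in the definition \cref{eq: Def_psiMixing} of $\Psi(Z)$ is empty, so the condition on $j$ is vacuously satisfied. Consequently $\Psi(Z) \leq n$, which already matches the outer minimum with $n$ on the right-hand side of \cref{eq: PsiZ}.

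Next, I will analyze $\psi(Z_{i+j} \mid Z_i)$ for $j \in \{1,\ldots,n-1\}$ and $i \in \{1,\ldots,n-j\}$. Since $\sS$ is finite, \Cref{lem: psi_finitesupp} applies and yields
\begin{align*}
    \psi(Z_{i+j} \mid Z_i) = \max_{a,b\in \sS} \Bigl\lvert \frac{\bbP(Z_i = a, Z_{i+j} = b) - \bbP(Z_i = a)\bbP(Z_{i+j} = b)}{\bbP(Z_i = a)\bbP(Z_{i+j} = b)}\Bigr\rvert.
\end{align*}
Because $Z$ is stationary and Markovian with transition matrix $\bP$ and equilibrium distribution $\pi$, we have $\bbP(Z_i = a) = \pi_a$, $\bbP(Z_{i+j} = b) = \pi_b$, and $\bbP(Z_i = a, Z_{i+j} = b) = \pi_a (\bP^j)_{a,b}$. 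Ergodicity on a finite state space gives $\pi_a, \pi_b > 0$ so the quotient is well defined. Substituting, the factors of $\pi_a$ cancel and we obtain
\begin{align*}
    \psi(Z_{i+j} \mid Z_i) = \max_{a,b\in \sS}\Bigl\lvert \frac{(\bP^j)_{a,b} - \pi_b}{\pi_b} \Bigr\rvert,
\end{align*}
which, crucially, does not depend on $i$.

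Finally, I will combine these two observations. The above display shows that the condition ``$\psi(Z_{i+j} \mid Z_i) \leq 1/4$ for all $i \in \{1,\ldots,n-j\}$'' appearing in \cref{eq: Def_psiMixing} is equivalent, for every $j \in \{1,\ldots,n-1\}$, to $\max_{a,b\in \sS} \lvert((\bP^j)_{a,b} - \pi_b)/\pi_b\rvert \leq 1/4$. Therefore
\begin{align*}
    \min\Bigl\{t\in\{1,\ldots,n-1\}: \max_{a,b\in\sS}\Bigl\lvert \tfrac{(\bP^t)_{a,b}-\pi_b}{\pi_b}\Bigr\rvert \leq \tfrac{1}{4}\Bigr\}
\end{align*}
equals $\Psi(Z)$ whenever this min exists, while otherwise $\Psi(Z) = n$ by the first observation. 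In both cases this matches the expression in \cref{eq: PsiZ}. No substantive obstacle is expected; the only mild subtlety is the bookkeeping at the boundary $j=n$, which is handled by the vacuous-quantifier argument above.
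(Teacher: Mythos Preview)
Your proposal is correct and matches the paper's approach exactly: the paper's proof is a single sentence stating that the result is immediate by substituting \Cref{lem: psi_finitesupp} into the definition \cref{eq: Def_psiMixing} of $\Psi(Z)$, which is precisely what you carry out in detail. Your additional bookkeeping about the vacuous quantifier when $j \geq n$ and the cancellation of $\pi_a$ via stationarity simply spells out what the paper leaves implicit.
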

\begin{proof}
	We claim that if $X$ and $Y$ are random variables with values supported on a finite set $\cZ$, then the suprema in \eqref{eq: Def_PsiDependence} are realized by singleton sets:
	\begin{equation}
		\psi(X , Y) =  \max_{x \in \cZ, y \in \cZ }\ \Bigl\lvert  \frac{\bbP(X = x, Y = y) -  \bbP(X = x) \bbP(Y = y) }{ \bbP(X = x) \bbP(Y=y)} \Bigr\rvert. \label{eq:MadSun}
	\end{equation}
	To see this, note that for any absolutely continuous measures $\mu\ll \nu$,
	\begin{equation}
		\sup_{E: \nu(E)>0} \Bigl\lvert  \frac{\mu(E)}{\nu(E)} - 1\Bigr\rvert = \sup_{E: \nu(E)>0}  \frac{1}{\nu(E)} \Bigl\lvert \int  \bb1_E \Bigl(\frac{\intd \mu}{\intd \nu} - 1\Bigr) \intd \nu \Bigr\rvert   \leq \Bigl\Vert \frac{\intd \mu}{\intd \nu} - 1 \Bigr\Vert_{\Linf}.
	\end{equation}
	Applying this with $\mu \de \bbP_{X,Y}$ and $\nu \de \bbP_X\otimes \bbP_Y$ shows that the left-hand side of \eqref{eq:MadSun} no greater than the right-hand side.
	The other inequality follows by using singleton sets in \eqref{eq: Def_PsiDependence}.
	Using \eqref{eq:MadSun} in the definition \eqref{eq: Def_PsiDependence} of $\Psi(Z)$ now yields \eqref{eq: PsiZ}.
\end{proof}

The \emph{total variation distance} between two probability measures $\mu, \nu$ on the same space $\cZ$ is
\begin{equation}
	d_{\TV}(\mu,\nu) \de \max_{E \subseteq \cZ}\, \lvert \mu(A) - \nu(E)\rvert.
	\label{eq: defTV}
\end{equation}
For any $\varepsilon>0$ the \emph{$\varepsilon$-mixing time} of an ergodic Markov chain $Z$ on a finite state space is defined as $\tmix(\varepsilon) \de \min\{t\geq 1: d(t) \leq \varepsilon\}$ where
$
	d(t) \de \sup_{i\in \cZ}d_{\mathrm{TV}}(\mathbb{P}(Z_t = \cdot \mid Z_0 = i),\pi).
$
We refer to $\tmix\de \tmix(1/4)$ as the \emph{total variation mixing time} of $Z$.
\begin{proposition}\label{prop: Psipi}
	Let $Z$ and $\pi \in [0,1]^{\cZ}$ be as in Lemma \ref{lem: Psi(Z)_finitestatespace} and denote $\pi_{\min} \de \min \{ \pi_x: x \in \cZ \}$.
	Then,
	$
		\Psi(Z) \leq (\log_{2}(1/\pi_{\min}) + 3) \tmix
	$
\end{proposition}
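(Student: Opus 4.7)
The plan is to combine the explicit expression for $\psi(Z_{i+j}\mid Z_i)$ provided by Lemma A.2 with the standard submultiplicativity of total variation distance for Markov chains.

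First I would set up: since $Z$ is stationary, $\bbP(Z_{i+j}=y, Z_i=x)=\pi_x(\bP^j)_{x,y}$ and $\bbP(Z_{i+j}=y)=\pi_y$, so Lemma A.2 rewrites
\begin{align*}
    \psi(Z_{i+j}\mid Z_i)=\max_{x,y\in\sS}\,\frac{\lvert (\bP^j)_{x,y}-\pi_y\rvert}{\pi_y}.
\end{align*}
Taking $A=\{y\}$ in definition \eqref{eq: defTV} gives $\lvert (\bP^j)_{x,y}-\pi_y\rvert \le d_{\TV}(\bP^j(x,\cdot),\pi)\le d(j)$, and bounding $\pi_y\ge \pi_{\min}$ yields $\psi(Z_{i+j}\mid Z_i)\le d(j)/\pi_{\min}$ for every $i$. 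Hence it suffices to produce an integer $j\le (\log_2(1/\pi_{\min})+3)\tmix$ with $d(j)\le \pi_{\min}/4$.

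Second I would invoke the standard submultiplicativity bound for the mixing time. Defining the ``bar'' distance $\bar d(t)\de \max_{x,y}d_{\TV}(\bP^t(x,\cdot),\bP^t(y,\cdot))$, one has $d(t)\le\bar d(t)\le 2d(t)$ and $\bar d(s+t)\le \bar d(s)\bar d(t)$ (see e.g.\ Lemma 4.11 and Lemma 4.12 of \cite{levin2017markov}). In particular, $\bar d(\tmix)\le 2 d(\tmix)\le 1/2$, so iterating gives
\begin{align*}
    d(k\tmix)\le \bar d(k\tmix)\le \bar d(\tmix)^k\le 2^{-k}
\end{align*}
for every positive integer $k$, and hence $d(j)\le 2^{-\lfloor j/\tmix\rfloor}$ for any $j\ge\tmix$.

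Third I would choose $k\de\lceil 2+\log_2(1/\pi_{\min})\rceil$ and set $j\de k\tmix$. Then $d(j)/\pi_{\min}\le 2^{-k}/\pi_{\min}\le 1/4$, so $\psi(Z_{i+j}\mid Z_i)\le 1/4$ uniformly in $i$, giving $\Psi(Z)\le k\tmix\le (\log_2(1/\pi_{\min})+3)\tmix$ since $k\le \log_2(1/\pi_{\min})+3$.

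There is no real obstacle here; the only subtlety is being careful that $\Psi(Z)$ is defined as an integer while the claimed bound is real-valued, which is handled by the ceiling in the choice of $k$. All other steps are routine consequences of Lemma A.2 and the standard mixing-time machinery.
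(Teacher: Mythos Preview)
Your proof is correct and essentially identical to the paper's: both reduce $\psi(Z_{i+j}\mid Z_i)$ to $\max_{x,y}\lvert(\bP^j)_{x,y}-\pi_y\rvert/\pi_y$, bound this by $d(j)/\pi_{\min}$, invoke $d(k\tmix)\le 2^{-k}$ (the paper cites \cite[(4.35)]{levin2017markov} directly, whereas you rederive it via $\bar d$), and take $k=\lceil \log_2(1/\pi_{\min})+2\rceil$. The only cosmetic difference is that the paper packages the first step as Lemma~\ref{lem: Psi(Z)_finitestatespace} rather than appealing to Lemma~\ref{lem: psi_finitesupp} directly.
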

\begin{proof}
	By taking $E = \{j\}$ in \eqref{eq: defTV} it follows that for every $t\geq 1$,
	$
		\max_{i,j \in \cZ} \lvert \bP^t_{i,j} - \pi_j  \rvert \leq d(t).
	$
	Denote $\ell \de \lceil \log_2(1/\pi_{\min}) + 2 \rceil$ and note that $\ell \geq \log_2(1/\pi_{\min}) + 2$.
	It consequently follows from \cite[(4.35)]{levin2017markov} that $
		d(\ell \tmix) \leq 2^{-\ell} \leq 4^{-1}\pi_{\min}.$
	Hence,
	\begin{equation}
		\max_{i,j \in \cZ} \lvert (\bP^{\ell \tmix}_{i,j} - \pi_j)/ \pi_j \rvert \leq 1/4. \label{eq: Step_Mixtopsi3}
	\end{equation}
	The desired result now follows from Lemma \ref{lem: Psi(Z)_finitestatespace} and the fact that $\ell \leq \log_2(1/\pi_{\min})\allowbreak  + 3$.
\end{proof}

\subsection{Bounds on \texorpdfstring{$\sigma(\bS)$}{sigma(S)} and \texorpdfstring{$v(\bS)$}{v(S)}}\label{sec: BoundSigmaV}
We now prove Remark \ref{rem: sigma_v_psi} in Lemmas \ref{lem: sigmaBound} and \ref{lem: vBound}.

\begin{lemma}\label{lem: sigmaBound}
	It holds that $\sigma(\bS)^2 \leq 3\Psi(Z) \varsigma(\bX)^2$.
\end{lemma}
\begin{proof}
	By expanding the summation in the definition of $\bS$ and regrouping terms,
	\begin{equation}
		\bbE\bigl[(\bS - \bbE[\bS])^2\bigr] = \frac{1}{2}\sum_{i=1}^n\sum_{j=1}^n   \bbE\bigl[\bX_i \bX_j + \bX_j \bX_i\bigr] . \label{eq:OccultHog}
	\end{equation}
	For any $i,j$ with $\psi(Z_i, Z_j) < \infty$ using the definition that $\bX_i = \bF_{i}(Z_i)$,
	\begin{align}
		 & \bbE\bigl[\bX_i \bX_j + \bX_j \bX_i\bigr] = \int_{\cZ_i\times \cZ_j}\bigl( \bF_i(z_i)\bF_j(z_j)  + \bF_j(z_j)\bF_i(z_i) \bigr)\, \intd\bbP_{Z_i,Z_j}(z_i,z_j)\label{eq:FancyWisp}                                       \\
		 & =\int_{\cZ_i\times \cZ_j} \bigl( \bF_i(z_i)\bF_j(z_j)  + \bF_j(z_j)\bF_i(z_i) \bigr)\frac{\intd \bbP_{Z_i,Z_j}}{\intd \bbP_{Z_i}\otimes \bbP_{Z_j}}(z_i,z_j) \, \intd(\bbP_{Z_i}\otimes \bbP_{Z_j})(z_i,z_j). \nonumber
	\end{align}
	Let $\tilde{Z}_i$ and $\tilde{Z_j}$ be independent random variables with the same marginal distribution as $Z_i$ and $Z_j$, respectively.
	Then, with $\Delta_{i,j} \de \frac{\intd \bbP_{Z_i,Z_j}}{\intd \bbP_{Z_i}\otimes \bbP_{Z_j}}(\tilde{Z}_i, \tilde{Z}_j)  - 1$ and $\tilde{\bX}_i = \bF_{i}(\tilde{Z}_i)$, it follows from \eqref{eq:FancyWisp} and the assumption that $\bX_{i}$ is centered that
	\begin{equation}
		\bbE\bigl[\bX_i \bX_j + \bX_j \bX_i\bigr] = \bbE[(1 + \Delta_{i,j}) (\tilde{\bX}_i \tilde{\bX}_j + \tilde{\bX}_j \tilde{\bX}_i)]  =  \bbE[\Delta_{i,j} (\tilde{\bX}_i \tilde{\bX}_j + \tilde{\bX}_j \tilde{\bX}_i)].\label{eq:JumpyQuiz}
	\end{equation}
	In general, for any $\bA, \bB\in \Csa^{d\times d}$ and scalar $\delta \in \bbR$, we have $\delta(\bA \bB +\bB \bA) \preceq \lvert \delta \rvert (\bA^2 + \bB^2)$ with respect to the positive semidefinite order.\footnote{This follows by using that $\delta(\bA-\bB)^2\succeq 0$ if $\delta\geq 0 $ and by using that $\delta(\bA+\bB)^2\preceq 0 $ if $\delta \leq 0$.}
	Hence, using Proposition \ref{prop: PsiRadonNikodym} and \eqref{eq:JumpyQuiz} if and only if $\psi(Z_i, Z_j) <1$, and applying the inequality directly otherwise,
	\begin{equation}
		\bbE\bigl[\bX_i \bX_j + \bX_j \bX_i\bigr] \preceq  \min\{1,\psi(Z_i, Z_j) \}\bigl(\bbE[\bX_i^2] + \bbE[\bX_j^2]\bigr) \label{eq:JustFace}
	\end{equation}
	for all $i,j \in \{1,\ldots,n \}$.
	Substitution in \eqref{eq:OccultHog} and reorganizing the terms again yields that
	\begin{equation}
		\bbE\bigl[(\bS - \bbE[\bS])^2\bigr] \preceq \sum_{i=1}^n \sum_{j=1}^n  \min\{1,\psi(Z_i, Z_j) \} \bbE[\bX_i^2]. \label{eq:NormalZap}
	\end{equation}
	Finally, using that $ \min\{1,\psi(Z_i, Z_j) \}\leq (1/4)^{\lfloor \lvert i-j \rvert/\Psi(Z)\rfloor}$ by Proposition \ref{prop: ExponentialPsi}, we here have that $\sum_{j=1}^n\min\{1,\psi(Z_i, Z_j) \} \leq 2\Psi(Z)\sum_{v = 0}^\infty (1/4)^{v} \leq 3\Psi(Z)$.
	The claimed estimate hence follows from the fact that the operator norm respects the positive semidefinite order when restricted to the set of positive semidefinite matrices.
\end{proof}
\begin{lemma}\label{lem: vBound}
	It holds that $v(\bS)^2 \leq 3\Psi(Z) \Vert \sum_{i=1}^n \Cov(\bX_i) \Vert$.
\end{lemma}
\begin{proof}
	For any random vector $V$, it holds that $\Vert \Cov(V) \Vert = \sup_{\Vert W \Vert \leq 1}\bbE[\lvert \langle V, W \rangle \rvert^2]$.
	In particular, $\Vert \operatorname{Cov}(\bS) \Vert = \sup_{\Tr \lvert \bM \rvert^2 \leq 1} \bbE[\lvert \Tr[\bS \bM] \rvert^2]$ with $\Tr[\bM]  = \sum_{i} \bM_{i,i}$ the unnormalized trace.
	Here, proceeding similarly to \eqref{eq:OccultHog}--\eqref{eq:NormalZap},
	\begin{equation}
		\bbE\bigl[\lvert \Tr[\bS \bM] \rvert^2\bigr] \leq  \sum_{i=1}^n \sum_{j=1}^n \min\{1,\psi(Z_i,Z_j) \} \bbE\bigl[ \lvert \Tr[\bX_i \bM ] \rvert^2 \bigr]. \label{eq:YellowHog}
	\end{equation}
	Thus, taking the supremum over $\bM$ and bounding $\sum_{j=1}^n\min\{1,\psi(Z_i, Z_j) \} \leq 3\Psi(Z)$, we have $\Vert \Cov(\bS) \Vert \leq 3\Psi(Z) \Vert \sum_{i} \Cov(\bX_i) \Vert$.
	This yields the desired result.
\end{proof}

\section{Proof of Lemma \texorpdfstring{\ref{LEM: SAMSON}}{.}}\label{apx: Samson}

The following is a special case of a result by Samson \cite{samson2000concentration}.
\begin{lemma}\label{lem: MatrixSamson}
	Consider scalar random variables of the form $Y_i = f_i(Z_i)$ for functions $f_i:\cZ_i \to [0,1]$, and consider deterministic matrices $\bB_1,\ldots,\bB_n \in \bbR^{d\times d}$.
	Then, there exists an absolute constant $C>0$ such that for every $x\geq 0$,
	\begin{equation}
		\bbP\Bigl( \Bigl\lvert   \Bigl\Vert \sum_{i=1}^n Y_i \bB_i \Bigr\Vert - \bbE\Bigl\Vert \sum_{i=1}^n Y_i \bB_i \Bigr\Vert \Bigr\rvert \geq x \Bigr) \leq \exp\Bigl(-C \frac{x^2}{\varsigma_*(\bB)^2\Psi(Z) }\Bigr)
	\end{equation}
	where $\varsigma_*^2(\bB) \de \sup_{\Vert v \Vert = \Vert w \Vert = 1} \sum_{i=1}^n \langle v, \bB_i w \rangle^2$.
\end{lemma}
\begin{proof}
	We use \cite[(2.23)]{samson2000concentration}, which provides a concentration--of--measure principle for random sums of deterministic vectors $b_1,\ldots,b_n$ in an arbitrary Banach space $(B,\Vert \cdot \Vert)$:
	\begin{equation}
		\bbP\Bigl( \Bigl\lvert\Bigl\Vert \sum_{i=1}^n Y_i b_i \Bigr\Vert - \bbE\Bigl\Vert  \sum_{i=1}^n Y_i b_i \Bigr\Vert \Bigr\rvert \geq x\Bigr) \leq \exp\Bigl( -\frac{x^2}{8 \varsigma_*(b)^2 \Vert \Gamma \Vert^2}\Bigr)\label{eq:WeepyHotel}
	\end{equation}
	where $\Vert \Gamma \Vert$ is a dependence quantity \cite[Section 2]{samson2000concentration} and
	$
		\varsigma_*(b)^2 \de \sup_{\xi \in B^*: \Vert \xi  \Vert\leq 1 } \sum_{i=1}^n  \xi(b_i)^2
	$
	with supremum runing over linear functionals $\xi:B\to \bbR$ of norm $\leq 1$.

	Suppose that the Banach space is $\bbR^{d\times d}$ with the operator norm.
	Then, the linear functionals of norm $\leq 1$ are convex combinations of those of the form $\xi(\bM) = \langle v, \bM w \rangle$ for fixed vectors $v,w \in \bbR^d$ with $\Vert v \Vert = \Vert w \Vert = 1$.
	Hence, taking $b_i = \bB_i$ and using that $\sum_{i=1}^n \xi(b_i)^2$ depends convexly on $\xi$ yields the variance proxy specified in Lemma \ref{lem: MatrixSamson}.

	Regarding the dependence quantity $\Vert \Gamma \Vert$, it suffices for our purposes that it can be bounded whenever the dependence in the Markov chain decays at an exponential rate.
	Specifically, since the total variation distance between the laws of $Z_i$ and $Z_j$ is at most $\min\{2,\psi(Z_i,Z_j) \}$ it follows from Proposition \ref{prop: ExponentialPsi} with the same argumentation as in \cite[pages 421 to 422]{samson2000concentration} that
	\begin{equation}
		\Vert \Gamma \Vert \leq \sum_{k=0}^{n-1} \sqrt{\min\bigl\{2, (1/4)^{k/\Psi(Z)}\bigr\}}.\label{eq:ZenPop}
	\end{equation}
	Thus, $\Vert \Gamma \Vert \leq c \Psi(Z)$ for some absolute constant $c>0$.
	Use this in \eqref{eq:WeepyHotel} to conclude.
\end{proof}
\begin{corollary}\label{cor: SamsonLp}
	With notation as in Lemma \ref{lem: MatrixSamson}, there exists an absolute constant $c>0$ such that for any integer $p\geq 1$,
	\begin{equation}
		\Bigl\lvert \bbE\Bigl[\Bigl\Vert \sum_{i=1}^n Y_i \bB_i \Bigr\Vert \Bigr] - \bbE\Bigl[\Bigl\Vert \sum_{i=1}^n Y_i \bB_i \Bigr\Vert^{2p} \Bigr]^{1/2p}   \Bigr\rvert  \leq c\sqrt{p \Psi(Z)} \varsigma_*(\bB).
	\end{equation}
\end{corollary}
\begin{proof}
	This is immediate from Lemma \ref{lem: MatrixSamson} since sub-Gaussianity is equivalent to moment bounds \cite[Theorem 2.1]{boucheron2013concentration}.
\end{proof}
\begin{proof}[Proof of \texorpdfstring{Lemma \ref{LEM: SAMSON}}{Lemma}]
	Note that the model \eqref{eq:AngryParrot} is of the form described in Lemma \ref{lem: MatrixSamson}, with $\bB_t = e_ie_j^{\transpose} + e_j e_i^{\transpose}$ or $\bB_t = e_i e_i^{\transpose}$ depending on $\varphi(t) = \{i,j \}$.
	Here,
	\begin{align}
		\varsigma_*^2(\bB) =\negsp \sup_{\Vert v \Vert = \Vert w \Vert = 1}\sum_{i\leq j} \langle v, \bB_{\varphi^{-1}(i,j)} w  \rangle^2 = \negsp\sup_{\Vert v \Vert = \Vert w \Vert = 1}\sum_{i=1}^n \sum_{j=1}^n v_i^2 w_j^2 = 1
	\end{align}
	where we used that $\Vert v_i \Vert = \Vert w_j \Vert = 1$.
	Thus, using Lemma \ref{lem: MatrixSamson} and Corollary \ref{cor: SamsonLp} with the triangle inequality, there exist $c,C>0$ such that for every $p\geq 1$,
	\begin{equation}
		\bbP\bigl(\bigl\lvert \Vert \bS \Vert  - \bbE[\Vert \bS \Vert^{2p}]^{1/2p} \bigr\rvert \geq  c\sqrt{p \Psi(Z)}   + x \bigr) \leq \exp(-Cx^2/\Psi(Z)). \label{eq:MadDog}
	\end{equation}
	On the other hand, taking $p$ a sufficiently large multiple of $\ln(d+1)$ depending on the desired multiplicative error $\delta$, and using the two-sided bounds in Corollary \ref{cor: MarkovMarkovBound} with the parameter estimates from Lemma \ref{lem: MarkovEntriesParamEst} and the assumption that $\Vert \bS_{i,j} \Vert_{\Linf} \leq 1$,
	\begin{align}
		\max_{-\delta \leq \gamma \leq \delta} \bigl\lvert  \bbE[{} & {}\Vert \bS \Vert^{2p}]^{1/2p} - (1+\gamma)\Vert \bS_{\free} \Vert \bigr\rvert \label{eq:NewSquid}                                                             \\
		                                                            & \leq c\Psi(Z)^{\frac{2}{3}}d^{\frac{1}{3}}\ln(d+1)^{\frac{2}{3}} + c\Psi(Z) \ln(d+1) + c \Psi(Z)^{\frac{1}{2}} d^{\frac{1}{4}} \ln(d+1)^{\frac{3}{4}}\nonumber
	\end{align}
	for some $c>0$.
	Combine \eqref{eq:MadDog}--\eqref{eq:NewSquid} and simplify using that $\sqrt{\ln(d+1)\Psi(Z)} \leq \ln(d+1)\Psi(Z)$ and $\Psi(Z)^{1/2} d^{1/4} \ln(d+1)^{3/4} \leq c' \Psi(Z)^{2/3}d^{1/3}\ln(d+1)^{2/3}$ for some $c'>0$.
\end{proof}

\section{Proofs concerning block Markov chains}\label{sec: ProofBMC}
This section concerns the proofs for Section \ref{sec: ApplicationsBMC}.
We adopt the notation that was used there.
In particular, $\bM = \sqrt{d/n} (\hat{\bN} - \bbE[\hat{\bN}])$, and $\bS$ is the self-adjoint dilation defined in \eqref{eq: Def_S_Dilation}.

This section is split in the following main parts.
In Section \ref{sec: ImprovedBMCEstimates}, we estimate the entries of $\Cov(\bM)$ and use these to provide precise the estimates on the parameters of $\bS$.
We prove Proposition \ref{prop: SingValN} in Section \ref{sec: SingvalDistribution}.
Finally, the proof of Theorem \ref{thm: BMC_Noise_Free} is given in Section \ref{sec: OpNormBMC} where we also establish a nonasymptotic concentration inequality in Proposition \ref{prop: BMC_Explicit_Concentration}.

\subsection{Estimates on the parameters of block Markovian random matrices}\label{sec: ImprovedBMCEstimates}
For any fixed $d,n\geq 1$, we introduce the following abbreviations
\begin{align}
	\mathfrak{c}_1 & \de d  \max\Bigl\{\frac{\pi_v}{\# \cV_v }:v\in \{1,\ldots,K\} \Bigr\}, \label{eq: Def_frakc1}                                                                                                                             \\
	\mathfrak{c}_2 & \de d^2 \max\Bigl\{\frac{\pi_v}{\# \cV_v }\frac{\mathbf{p}_{v,w}}{\# \cV_{w}}:v,w\in \{1,\ldots,K\} \Bigr\}, \label{eq: Def_frakc2}                                                                                       \\
	\mathfrak{c}_3 & \de d^3 \max\Bigl\{\frac{\pi_u}{\# \cV_u}\frac{\mathbf{p}_{u,v}}{\# \cV_{v}}\frac{\mathbf{p}_{v,w}}{\# \cV_{w}}: u,v,w \in \{1,\ldots,K \}\Bigr\}, \label{eq: Def_frakc3}                                                 \\
	\mathfrak{d}   & \de d^2 \max\Bigl\{ \Bigl\lvert \frac{1}{n}\sum_{t=1}^{n-3} (n-2-t) \frac{(\mathbf{p}^t)_{u,v} - \pi_v}{\# \cV_v}\Bigr\rvert \frac{\mathbf{p}_{v,w}}{\# \cV_w}  : u,v,w \in \{1,\ldots,K \} \Bigr\}.\label{eq: Def_frakd}
\end{align}
Here, we let $\mathfrak{d} = 0$ if $n-3 < 1$.
Simple upper bounds on the $\mathfrak{c}_i$ and $\mathfrak{d}$ are given in Lemma \ref{lem: FrakParamEstimates}.
\begin{lemma}
	For any $i,j,k,m \in \{1,\ldots,d \}$ it holds that
	\begin{align}
		 & \lvert \Cov(\bM)_{ij,km} \rvert \label{eq: Lem_Covijkm} \\
		 & \leq
		\begin{cases}
			\frac{1}{d}(\mathfrak{c}_2 + \frac{2}{d^2}\mathfrak{c}_2^2 + \frac{2}{d}\mathfrak{c}_3  + \frac{2}{d^2}\mathfrak{c}_2 \mathfrak{d}) & \quad \text{ if } (i,j) = (k,m),                                     \\
			\frac{1}{d^2}(\frac{3}{d}\mathfrak{c}_2^2 +  2\mathfrak{c}_3 + \frac{2}{d} \mathfrak{c}_2 \mathfrak{d})                             & \quad \text{ if } (i,j) \neq (k,m) \text{ and }(i=m \text{ or }j=k), \\
			\frac{1}{d^3}(3\mathfrak{c}_2^2 + 2\mathfrak{c}_2 \mathfrak{d})                                                                     & \quad \text{ else}.
		\end{cases} \nonumber
	\end{align}
	Furthermore, if $i \in \cV_a$ and $j \in \cV_b$ for $a,b \in \{1,\ldots,K \}$, then
	\begin{align}
		\Bigl\lvert \Cov(\bM)_{ij,ij} - d\frac{\pi_a\mathbf{p}_{a,b}}{\# \cV_{a} \#\cV_{b}} \Bigr\rvert \leq \frac{1}{d^2}\Bigl(\frac{d}{n} \mathfrak{c}_2 + \frac{3}{d}\mathfrak{c}_2^2 + 2\mathfrak{c}_3 + \frac{2}{d}\mathfrak{c}_2 \mathfrak{d}\Bigr).  \label{eq: Lem_Covijij}
	\end{align}
\end{lemma}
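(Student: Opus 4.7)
The plan is to expand
\[
\Cov(\bM)_{ij,km} = \frac{d}{n}\sum_{s,t=1}^{n-1} \bbE[\xi_s^{(i,j)}\xi_t^{(k,m)}], \qquad \xi_t^{(i,j)} \de \bb1\{Z_t=i, Z_{t+1}=j\} - \bbP(Z_t=i, Z_{t+1}=j),
\]
and to split the double sum according to the time gap $|s-t|$. Writing $\bbP_{ij} \de \bbP(Z_1=i, Z_2=j)$ and recalling the definitions \cref{eq: Def_frakc1}--\cref{eq: Def_frakd}, stationarity of $Z$ yields $\bbP_{ij} = \pi_a\mathbf{p}_{a,b}/(\#\cV_a \#\cV_b) \leq \mathfrak{c}_2/d^2$ whenever $i\in \cV_a$ and $j\in \cV_b$; three-state joint probabilities are analogously bounded by $\mathfrak{c}_3/d^3$.

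When $s=t$, the product of indicators collapses to a single indicator, so $\bbE[\xi_t^{(i,j)}\xi_t^{(k,m)}] = \bb1\{(i,j)=(k,m)\}\bbP_{ij} - \bbP_{ij}\bbP_{km}$. After multiplying by $d/n$ and summing over $t$, this contributes $d\tfrac{n-1}{n}\bb1\{(i,j)=(k,m)\}\bbP_{ij} - d\tfrac{n-1}{n}\bbP_{ij}\bbP_{km}$: the first summand supplies the main term $d\bbP_{ij} = d\pi_a\mathbf{p}_{a,b}/(\#\cV_a\#\cV_b)$ in \cref{eq: Lem_Covijij} with error $d\bbP_{ij}/n \leq \mathfrak{c}_2/(dn)$, and the second summand is bounded by $d\bbP_{ij}\bbP_{km} \leq \mathfrak{c}_2^2/d^3$. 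When $|s-t|=1$, the product of indicators is nonzero only if $t=s+1$ and $j=k$ (giving $\bbP(Z_s=i, Z_{s+1}=j, Z_{s+2}=m) \leq \mathfrak{c}_3/d^3$) or if $s=t+1$ and $i=m$ (analogous). There are at most $2(n-2)$ such pairs, so after multiplying by $d/n$ the indicator contribution is at most $2\mathfrak{c}_3/d^2$, augmented by the centering term $-\bbP_{ij}\bbP_{km}$. In the third case of \cref{eq: Lem_Covijkm}, where $j\neq k$ and $i\neq m$, the indicator always vanishes, which explains the absence of $\mathfrak{c}_3$ from that case.

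The main technical step is the case $|s-t|\geq 2$. For $t \geq s + 2$ the Markov property and stationarity give
\[
\bbE[\xi_s^{(i,j)}\xi_t^{(k,m)}] = \bbP_{ij} \cdot \frac{\mathbf{p}_{c,e}}{\#\cV_e} \cdot \frac{(\mathbf{p}^{t-s-1})_{b,c} - \pi_c}{\#\cV_c},
\]
where $b, c, e$ are the cluster labels of $j, k, m$. Summing over all $(s,t)$ with fixed $\tau \de t-s \geq 2$ introduces the multiplicity $n-1-\tau$, and the substitution $t'=\tau-1$ transforms $\sum_{s,t:t-s\geq 2}\bbE[\xi_s\xi_t]$ into precisely the expression whose absolute value is bounded, by the definition \cref{eq: Def_frakd} of $\mathfrak{d}$ (with $u=b$, $v=c$, $w=e$), by $\bbP_{ij}\cdot n\cdot \mathfrak{d}/d^2$. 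The main subtlety here is that the sign of $(\mathbf{p}^{t'})_{b,c} - \pi_c$ may oscillate in $t'$, so one must keep the absolute value \emph{outside} the $t'$-sum in order to invoke $\mathfrak{d}$; this requires summing before bounding. A symmetric argument for $s-t \geq 2$ gives an analogous bound with $\bbP_{km}$ in place of $\bbP_{ij}$. Multiplying by $d/n$ and combining both directions yields at most $2\mathfrak{c}_2\mathfrak{d}/d^3$.

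Collecting the three contributions and grouping by which of $(i,j) = (k,m)$, $j=k$, $i=m$, or none hold produces the three cases of \cref{eq: Lem_Covijkm}; the refined estimate \cref{eq: Lem_Covijij} is obtained by keeping the main term $d\bbP_{ij}$ explicit in the $s=t$ computation and bundling every other contribution into the claimed error. I expect the most delicate bookkeeping to be the handling of signs and multiplicities in the $|s-t|\geq 2$ case, where matching the weighted inner sum exactly with \cref{eq: Def_frakd} is what prevents a loss of a factor that would otherwise break the eventual operator-norm bounds in \Cref{lem: QualitativeParamEstimates}.
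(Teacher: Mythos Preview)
Your proposal is correct and follows essentially the same route as the paper: expand $\Cov(\bM)_{ij,km}$ as a double sum over times, split according to whether $|s-t|$ equals $0$, $1$, or is at least $2$, and in the last case use the Markov property together with stationarity so that the weighted sum $\sum_{t'=1}^{n-3}(n-2-t')\bigl((\mathbf{p}^{t'})_{b,c}-\pi_c\bigr)/\#\cV_c$ appears and can be bounded via the definition of $\mathfrak{d}$. The bookkeeping you flag (keeping the absolute value outside the $t'$-sum, and tracking which of $j=k$ or $i=m$ activates the $\mathfrak{c}_3$ contribution in the $|s-t|=1$ case) is exactly what the paper does as well.
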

\begin{proof}
	Recall that the Markov chain of transitions $E = (E_t)_{t=1}^{n-1}$ is defined by $E_t = (Z_t, Z_{t+1})$.
	For any $i,j,k,m \in \{1,\ldots,n\}$ we can write
	\begin{align}
		 & \Cov(\bM)_{ij,km} = \frac{d}{n} \bbE\Bigl[ (\hat{\bN} - \bbE[\hat{\bN}])_{i,j} (\hat{\bN} - \bbE[\hat{\bN}])_{k,m} \Bigr]\label{eq: CovMijkm}                     \\
		 & \ = \frac{d}{n} \sum_{t_1,t_2 = 1}^{n-1}\bbE\Bigl[\bb1\{E_{t_1} = (i,j) \} \bb1\{E_{t_2} = (k,m) \} - \bbP(E_{t_1} = (i,j))\bbP(E_{t_2} = (k,m))  \Bigr]\nonumber \\
		 & \ \ed \frac{d}{n} \sum_{t_1, t_2=1}^{n-1} \cE_{t_1,t_2} (i,j,k,m).  \nonumber
	\end{align}

	Let us separately consider the case where $t_1 = t_2$, $\lvert t_1 - t_2 \rvert = 1$, and $\lvert t_1 - t_2 \rvert > 1$.
	If $t_1 = t_2$, then since the block Markov chain starts from stationarity,
	\begin{equation}
		\frac{d}{n}\sum_{t_1 = 1}^{n-1} \cE_{t_1,t_1}(i,j,k,m) \label{eq: t1t2=}
		=
		\begin{cases}
			-\frac{d(n-1)}{n}\bbP(E_{1} = (i,j))\bbP(E_{1} = (k,m))               & \text{ if }(i,j)\neq (k,m), \\
			\frac{d(n-1)}{n}\bigl(\bbP(E_{1} = (i,j)) - \bbP(E_1 = (i,j))^2\bigr) & \text{ if }(i,j)=(k,m),
		\end{cases}
	\end{equation}
	It here holds for $i \in \cV_{a}$ and $j\in \cV_{b}$ that $\bbP(E_1 = (i,j)) = \pi_a \mathbf{p}_{a,b}/  \# \cV_a \#\cV_b \leq \mathfrak{c}_2/d^2$.
	Hence, using that $\bbP(E_{1} = (i,j)) - \bbP(E_1 = (i,j))^2 \leq \bbP(E_{1} = (i,j))$ and $(n-1)/n \leq 1$,
	\begin{align}
		\Bigl\lvert \frac{d}{n} & \sum_{t_1 = 1}^{n-1} \cE_{t_1,t_1}(i,j,k,m) \Bigr\rvert \label{eq: Caset1=t2} \leq
		\begin{cases}
			\frac{1}{d^3} \mathfrak{c}_2^2, & \text{ if }(i,j)\neq (k,m), \\
			\frac{1}{d}\mathfrak{c}_2       & \text{ if }(i,j)=(k,m).
		\end{cases}
	\end{align}
	Furthermore, for $i\in \cV_a$ and $j \in \cV_b$,
	\begin{align}
		{} & {}\Bigl\lvert \frac{d}{n}\sum_{t_1 = 1}^{n-1} \cE_{t_1,t_1} (i,j,i,j) - d\frac{\pi_a \mathbf{p}_{a,b}}{\# \cV_a \#\cV_b} \Bigr\rvert\label{eq: Caset1=t2Precise}                  \\
		   & \leq \Bigl(\frac{dn}{n} - \frac{d(n-1)}{n}\Bigr)\bbP(E_1 = (i,j)) +  \frac{d(n-1)}{n}\bbP(E_1 = (i,j))^2  \leq \frac{\mathfrak{c}_2}{dn} + \frac{\mathfrak{c}_2^2}{d^3}.\nonumber
	\end{align}

	Now suppose that $\lvert t_1 - t_2  \rvert= 1$.
	By symmetry, it suffices to consider $t_2 = t_1 + 1$.
	Then,
	\begin{align}
		 & \Bigl\lvert\frac{d}{n}\sum_{t_1 = 1}^{n-2} \cE_{t_1,t_1+1}(i,j,k,m) \Bigr\rvert \\
		 & =
		\begin{cases}
			\frac{d(n-2)}{n}\bbP(E_{1} = (i,j))\bbP(E_{2} = (k,m))                                                      & \text{ if }j\neq k, \\
			\frac{d(n-2)}{n}\bigl\lvert \bbP(E_1 = (i,j), E_2 = (j,m)) - \bbP(E_1 = (i,j)) \bbP(E_2 = (j,m))\bigr\rvert & \text{ if }j = k.
		\end{cases}  \nonumber
	\end{align}
	A similar estimate applies when $t_1 = t_2 + 1$.
	The only difference is that the case distinction then depends on whether $i =m$ or $i \neq m$ because $\cE_{t_1,t_2}(i,j,k,m) = \cE_{t_2,t_1}(k,m,i,j)$.
	Hence,
	\begin{align}
		\Bigl\lvert \frac{d}{n}\sum_{\lvert t_1 - t_2 \rvert = 1} \cE_{t_1,t_2}(i,j,k,m) \Bigr\rvert
		\leq
		\begin{cases}
			\frac{2}{d^3} \mathfrak{c}_2^2,                             & \ \text{ if } j \neq k \text{ and }i\neq m, \\
			\frac{2}{d^2}(\mathfrak{c}_3 + \frac{1}{d}\mathfrak{c}_2^2) & \ \text{ if } j = k \text{ or }i=m.
		\end{cases}\label{eq: Caset1t2=1}
	\end{align}

	Finally, consider the case where $\lvert t_1 - t_2 \rvert >1$.
	Then, if $t_2 > t_1 + 1$,
	\begin{align}
		 & \Bigl\lvert \frac{d}{n}\sum_{t_1 = 1}^{n-3} \sum_{t_2 > t_1 + 1} \cE_{t_1,t_2}(i,j,k,m) \Bigr\rvert
		= \Bigl\lvert \frac{d}{n}\sum_{s = 2}^{n-2} (n-1-s) \cE_{1,1+s}(i,j,k,m) \Bigr\rvert                                                                            \\
		 & = d\bbP(E_1 = (i,j)) \bbP(Z_{2+s} = m \mid Z_{1+s} = k)\nonumber                                                                                             \\
		 & \qquad \qquad \times \Bigl\lvert\sum_{s = 2}^{n-2}  \frac{n-1-s}{n}\bigl( \bbP(Z_{1+s} = k \mid Z_2 = j) - \bbP(Z_{1+s} = k)  \bigr)  \Bigr\rvert. \nonumber
	\end{align}
	Recall \eqref{eq: Def_frakd} and note that $\bbP(Z_{1+s} = k \mid Z_2 = j) = (\mathbf{p}^{s-1})_{a,b}/\# \cV_b$ for any $j \in \cV_a$ and $k\in \cV_b$.
	A substitution $t = s-1$ hence yields that
	$
		\lvert \frac{d}{n}\sum_{t_1 = 1}^{n-3} \sum_{t_2 > t_1 + 1} \cE_{t_1,t_2}(i,j,k,m)\rvert
		\leq \frac{1}{d^3}\mathfrak{c}_2 \mathfrak{d}.
	$
	By using a similar estimate for the case $t_1 > t_2 +1$ we conclude that
	\begin{align}
		\Bigl\lvert \frac{d}{n} & \sum_{\lvert t_1 - t_2 \rvert >1} \cE_{t_1,t_2}(i,j,k,m) \Bigr\rvert \leq \frac{2}{d^3}\mathfrak{c}_2 \mathfrak{d}. \label{eq: Caset1t2>1}
	\end{align}

	We finally combine the foregoing estimates.
	Due to \eqref{eq: CovMijkm} it holds that
	\begin{align}
		\lvert \Cov(\bM)_{ij,km} \rvert\leq &
		\Bigl\lvert \frac{d}{n}\sum_{t_1 = 1}^{n-1} \cE_{t_1,t_1}(i,j,k,m) \Bigr\rvert   +  \Bigl\lvert \frac{d}{n}\sum_{\lvert t_1 - t_2 \rvert = 1} \cE_{t_1,t_2} (i,j,k,m) \Bigr\rvert \nonumber \\
		                                    & +  \Bigl\lvert \frac{d}{n}\sum_{\lvert t_1 - t_2 \rvert > 1} \cE_{t_1,t_1+1} (i,j,k,m) \Bigr\rvert. \label{eq: FineCovEntryEstimate}
	\end{align}
	Estimate the terms in the right-hand side of \eqref{eq: FineCovEntryEstimate} using \eqref{eq: Caset1=t2}, \eqref{eq: Caset1t2=1}, and \eqref{eq: Caset1t2>1} to complete the proof of \eqref{eq: Lem_Covijkm}.
	Similarly, the proof of \eqref{eq: Lem_Covijij} can be completed by combining \eqref{eq: Caset1=t2Precise}, \eqref{eq: Caset1t2=1}, and \eqref{eq: Caset1t2>1} together with the fact that
	\begin{align}
		\Bigl\lvert \Cov(\bM)_{ij,ij} -  d & \frac{\pi_a \mathbf{p}_{a,b}}{\# \cV_a \# \cV_b} \Bigr\rvert\leq \Bigl\lvert \frac{d}{n}\sum_{t_1 = 1}^{n-1} \cE_{t_1,t_1}(i,j,i,j) -  d\frac{\pi_a \mathbf{p}_{a,b}}{\# \cV_a \# \cV_b}\Bigr\rvert           \\
		                                   & +  \Bigl\lvert \frac{d}{n}\sum_{\lvert t_1 - t_2 \rvert = 1} \cE_{t_1,t_2} (i,j,i,j) \Bigr\rvert +  \Bigl\lvert \frac{d}{n}\sum_{\lvert t_1 - t_2 \rvert > 1} \cE_{t_1,t_1+1} (i,j,i,j)\Bigr\rvert .\nonumber
	\end{align}
	This concludes the proof.
\end{proof}

We next show that one can estimate $\Psi(Z)$ and $\Psi(E)$ in terms of $\Psi(\mathbf{p})$.
\begin{lemma}\label{lem: PsiZC}
	It holds that $\Psi(Z)  = \min\{n, \Psi(\mathbf{p})\}.$
\end{lemma}
\begin{proof}
	For any $i\in \cV_a$ and $j \in \cV_b$ it holds that $\bbP(Z_{1+t} = j \mid  Z_1 = i ) = (\mathbf{p}^t)_{a,b}/\# \cV_b$.
	The desired estimate is hence immediate from Lemma \ref{lem: Psi(Z)_finitestatespace}.
\end{proof}
\begin{lemma}\label{lem: PsiZE}
	It holds that $\Psi(E) \leq \Psi(Z) +1$.
\end{lemma}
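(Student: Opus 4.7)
The goal is to reduce the $\psi$-dependence of the pair-valued chain $E$ to the $\psi$-dependence of the underlying Markov chain $Z$. Concretely, I will prove the pointwise inequality
\[
    \psi(E_{i+j} \mid E_i) \leq \psi(Z_{i+j} \mid Z_{i+1})
\]
for every admissible $i$ and $j \geq 2$, and then invoke the definition of $\Psi(Z)$ on the ``shifted'' indices $(i+1, j-1)$ to conclude.

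The key computation is to evaluate the numerator $\bbP(E_i \in A, E_{i+j} \in B) - \bbP(E_i \in A)\bbP(E_{i+j} \in B)$ by conditioning on $Z_{i+1}$ and then on $Z_{i+j}$. Writing $f_A(z) \de \bbP(E_i \in A \mid Z_{i+1}=z)$ and $h_B(y) \de \bbP(E_{i+j} \in B \mid Z_{i+j}=y)$, the Markov property of $Z$ yields
\[
    \bbP(E_i \in A, E_{i+j} \in B) = \int f_A(z) h_B(y) \, \intd \bbP_{Z_{i+1}, Z_{i+j}}(z,y),
\]
whereas integrating the same integrand against the product measure gives $\bbP(E_i \in A)\bbP(E_{i+j} \in B)$. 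Subtracting, applying \Cref{prop: PsiRadonNikodym} to the Radon--Nikodym derivative of $\bbP_{Z_{i+1}, Z_{i+j}}$ with respect to $\bbP_{Z_{i+1}}\otimes \bbP_{Z_{i+j}}$, and noting $f_A, h_B \geq 0$ yields
\[
    \bigl\lvert \bbP(E_i \in A, E_{i+j} \in B) - \bbP(E_i \in A)\bbP(E_{i+j} \in B)\bigr\rvert \leq \psi(Z_{i+j} \mid Z_{i+1}) \bbP(E_i \in A)\bbP(E_{i+j} \in B),
\]
which is the desired inequality after taking the supremum over $A, B$.

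Once this reduction is in place, the conclusion is immediate. If $j \geq \Psi(Z)+1$ and $i + j \leq n-1$, then $j-1 \geq \Psi(Z)$ and the pair $(i+1, j-1)$ lies in the index range controlled by the definition \cref{eq: Def_psiMixing} of $\Psi(Z)$, so $\psi(Z_{(i+1)+(j-1)} \mid Z_{i+1}) \leq 1/4$. Combining with the reduction, $\psi(E_{i+j} \mid E_i) \leq 1/4$ for all admissible $i$ whenever $j \geq \Psi(Z)+1$, which is precisely the statement $\Psi(E) \leq \Psi(Z) + 1$.

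I do not anticipate any serious obstacle: the only technical point is the double-conditioning step used to express the joint law of $(E_i, E_{i+j})$ in terms of $(Z_{i+1}, Z_{i+j})$, which is a routine consequence of the Markov property for $Z$ together with the fact that $E_i$ is measurable with respect to $\sigma(Z_i, Z_{i+1})$ and $E_{i+j}$ is measurable with respect to $\sigma(Z_{i+j}, Z_{i+j+1})$.
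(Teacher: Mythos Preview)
Your proof is correct, and it takes a genuinely different route from the paper's argument. The paper works entirely inside the finite state space: it invokes \Cref{lem: psi_finitesupp} to reduce the $\psi$-coefficient of $E$ to a maximum over singleton events $\{E_1 = (k,m)\}$, $\{E_{1+t} = (i,j)\}$, then uses stationarity of $Z$ to compute the transition ratio explicitly, obtaining the \emph{equality} $\psi(E_{1+t}\mid E_1) = \psi(Z_t \mid Z_1)$ (via singletons) before applying \Cref{lem: Psi(Z)_finitestatespace}. Your argument instead avoids both the finite-state reduction and the explicit singleton computation: by conditioning on $(Z_{i+1},Z_{i+j})$ and integrating against the Radon--Nikodym bound from \Cref{prop: PsiRadonNikodym}, you prove directly the inequality $\psi(E_{i+j}\mid E_i)\le \psi(Z_{i+j}\mid Z_{i+1})$ for any Markovian $Z$ on a standard Borel space, without invoking stationarity. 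The paper's approach is shorter in this concrete setting and yields an exact identity; your approach is more general and more conceptual, at the cost of only an inequality---which is all the lemma requires. One small remark: your phrasing ``If $j \ge \Psi(Z)+1$'' suggests you use the definition of $\Psi(Z)$ at all $j-1 \ge \Psi(Z)$, but strictly speaking \cref{eq: Def_psiMixing} only guarantees the $1/4$ bound at $j-1 = \Psi(Z)$; it suffices to take $j = \Psi(Z)+1$, which is all you need for the conclusion.
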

\begin{proof}
	Since $Z$ starts in stationarity, the same holds for $E$.
	Further, for any $t>1$ and $i,j,k,m\in \{1,\ldots,d \}$ using the definition that $E_t = (Z_t,Z_{t+1})$,
	\begin{align}
		 & \max_{i,j,k,m \in \{1,\ldots,d\}}\Bigl\lvert\, \frac{\bbP(E_{1+t} = (i,j) \mid E_1 = (k,m)) - \bbP(E_{1+t} = (i,j))}{\bbP(E_{1+t} = (i,j))} \, \Bigr\rvert \leq \frac{1}{4} \Bigr\}    \\
		 & =  \max_{i,m \in \{1,\ldots,d\}}\Bigl\lvert\, \frac{\bbP(Z_{1+t-1} = i \mid Z_1 = m) -  \bbP(Z_{1 + t - 1} = i)}{\bbP(Z_{t+1} = i)} \, \Bigr\rvert \leq \frac{1}{4} \Bigr\}. \nonumber
	\end{align}
	The desired result now follows from Lemma \ref{lem: Psi(Z)_finitestatespace}.
\end{proof}

We bound the parameters \eqref{eq: Def_frakc1}--\eqref{eq: Def_frakd}.
Recall that
$
	\hat{\alpha}_{\min} = \min\{\# \cV_{k}/d: k \in  \{1,\ldots,K \} \}.
$
\begin{lemma}\label{lem: FrakParamEstimates}
	For any $i \in \{1,2,3 \}$ it holds that $\mathfrak{c}_i \leq \hat{\alpha}_{\min}^{-i}$ and $\mathfrak{d} \leq \frac{4}{3}\Psi(\mathbf{p})\hat{\alpha}_{\min}^{-2}$.
\end{lemma}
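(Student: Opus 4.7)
The plan is to reduce everything to two tools: the elementary bounds $\pi_v \leq 1$ and $\mathbf{p}_{v,w} \leq 1$ for the cluster-level quantities, together with the fact that $d/\#\cV_k \leq \hat{\alpha}_{\min}^{-1}$ for every cluster $k$, which one reads off from the definition \cref{eq: Def_alphamin} of $\hat{\alpha}_{\min}$.

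For the bounds $\mathfrak{c}_i \leq \hat{\alpha}_{\min}^{-i}$ the argument is immediate: in each of the definitions \cref{eq: Def_frakc1}--\cref{eq: Def_frakc3} one distributes the factor $d^i$ across the denominators $\#\cV_{(\cdot)}$, each factor of $d/\#\cV_{(\cdot)}$ contributes at most $\hat{\alpha}_{\min}^{-1}$, and the remaining scalars $\pi_v$ and $\mathbf{p}_{v,w}$ are bounded by $1$. This disposes of the first part of the lemma in a couple of lines.

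The substantive step is the bound on $\mathfrak{d}$. The plan is to first drop the weighting factor: since $0 \leq (n-2-t)/n \leq 1$ and $\mathbf{p}_{v,w}\leq 1$, one has
\begin{align*}
    \mathfrak{d} \leq \frac{d^2}{\#\cV_v \#\cV_w}\sup_{u,v}\sum_{t=1}^{\infty} \lvert (\mathbf{p}^t)_{u,v} - \pi_v \rvert \leq \hat{\alpha}_{\min}^{-2}\sup_{u,v}\sum_{t=1}^{\infty} \lvert (\mathbf{p}^t)_{u,v} - \pi_v \rvert,
\end{align*}
so it suffices to show that the tail sum is at most $\tfrac{4}{3}\Psi(C)$. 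Here I will use the explicit description \cref{eq: PsiC} of $\Psi(C)$ to connect the chain's transition kernel to its $\psi$-dependence coefficient: by \Cref{lem: psi_finitesupp} (applied to $C_1$ and $C_{1+t}$ as in the proof of \Cref{lem: Psi(Z)_finitestatespace}) one has $\lvert (\mathbf{p}^t)_{u,v} - \pi_v \rvert \leq \pi_v \psi(C_{1+t}\mid C_1)$, and then \Cref{prop: ExponentialPsi} gives exponential decay beyond one mixing window.

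The summation is then a routine geometric argument which I expect to be the only delicate step: split the tail at $\Psi(C)$, bound each of the $\Psi(C)-1$ initial terms by the trivial estimate $\lvert (\mathbf{p}^t)_{u,v}-\pi_v \rvert \leq 1$, and bound the remaining terms by $(1/4)^{\lfloor t/\Psi(C)\rfloor}$. Grouping the tail into blocks of length $\Psi(C)$ yields
\begin{align*}
    \sum_{t=\Psi(C)}^{\infty}(1/4)^{\lfloor t/\Psi(C)\rfloor} = \Psi(C)\sum_{k=1}^{\infty}(1/4)^k = \Psi(C)/3,
\end{align*}
and combining the two pieces produces $(\Psi(C)-1) + \Psi(C)/3 \leq \tfrac{4}{3}\Psi(C)$, which is exactly the bound in the statement. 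The main thing to get right is to make sure the block decomposition of the tail is accounted for correctly; everything else in the proof is bookkeeping.
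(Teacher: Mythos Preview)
Your proposal is correct and follows essentially the same approach as the paper's proof: the paper also bounds $\mathfrak{c}_i$ directly from $\pi_v,\mathbf{p}_{v,w}\leq 1$ and $d/\#\cV_k\leq\hat\alpha_{\min}^{-1}$, and for $\mathfrak{d}$ it drops the weight $(n-2-t)/n$, splits the sum at $\Psi(C)$, bounds the initial $\Psi(C)-1$ terms trivially, uses $\lvert(\mathbf{p}^t)_{u,v}-\pi_v\rvert\leq\psi(C_{1+t}\mid C_1)$ together with \Cref{prop: ExponentialPsi} for the tail, and sums the resulting geometric series in blocks of length $\Psi(C)$ to obtain $(\Psi(C)-1)+\Psi(C)/3\leq\tfrac{4}{3}\Psi(C)$. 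The only cosmetic point is that in your first display the clusters $v,w$ appearing in $d^2/(\#\cV_v\#\cV_w)$ should be placed inside the supremum (or the max over $u,v,w$ kept explicit) for the notation to be unambiguous.
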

\begin{proof}
	The estimate $\mathfrak{c}_i \leq \hat{\alpha}_{\min}^{-i}$ is immediate from the definitions \eqref{eq: Def_frakc1}--\eqref{eq: Def_frakc3} and the fact that $\pi_v \leq 1$ and $\mathbf{p}_{v,w} \leq 1$ for all $v,w \in \{1,\ldots,K \}$.
	Further, for any $u,v,w\in \{1,\ldots,K \}$ the triangle inequality together the estimates $\#\cV_v/d \geq  \hat{\alpha}_{\min}$ and $\mathbf{p}_{v,w}\leq 1$ yields that
	\begin{align}
		\Bigl\lvert \frac{1}{n}\sum_{t=1}^{n-2} (n-2-t) & \frac{(\mathbf{p}^t)_{u,v} - \pi_v}{\# \cV_v}\Bigr\rvert \frac{\mathbf{p}_{v,w}}{\# \cV_w}\label{eq: frakdestimate}
		\leq \sum_{t=1}^{n-2}\Bigl\lvert  \frac{(\mathbf{p}^t)_{u,v} - \pi_v}{\# \cV_v}\Bigr\rvert \frac{\mathbf{p}_{v,w}}{\# \cV_w}                                                                                                  \\
		                                                & \leq d^{-2}\hat{\alpha}_{\min}^{-2}\Bigl((\Psi(\mathbf{p})-1) + \sum_{t = \Psi(\mathbf{p})}^{\infty} \bigl\lvert (\mathbf{p}^t)_{u,v} - \pi_v \bigr\rvert \Bigr). \nonumber
	\end{align}
	Since $\pi_v \leq 1$, one can relate the right-hand side of \eqref{eq: frakdestimate} to the $\psi$-dependence coefficient:
	\begin{align}
		\max\Bigl\{\lvert (\mathbf{p}^t)_{u,v} - \pi_v \rvert : u,v \in \{1,\ldots,K \}  \Bigr\} & \leq \max\Bigl\{\frac{\lvert (\mathbf{p}^t)_{u,v} - \pi_v \rvert }{\pi_v}: u,v \in \{1,\ldots,K \}  \Bigr\}.\nonumber
	\end{align}
	Due to Proposition \ref{prop: ExponentialPsi} it hence follows that
	\begin{equation}
		\mathfrak{d} \leq \hat{\alpha}_{\min}^{-2}\Bigl((\Psi(\mathbf{p})-1) + \negsp\sum_{t = \Psi(\mathbf{p})}^{\infty}\negsp\Bigl(\frac{1}{4}\Bigr)^{\lfloor t/ \Psi(\mathbf{p}) \rfloor}  \Bigr) = \hat{\alpha}_{\min}^{-2}\Bigl((\Psi(\mathbf{p})-1) + \Psi(\mathbf{p})\sum_{s = 1}^{\infty}\Bigl(\frac{1}{4}\Bigr)^{s}\Bigr).
	\end{equation}
	Use that $\sum_{t=1}^\infty (1/4)^t = 1/3$ to conclude the proof.
\end{proof}
Recall that $\Psi(\mathbf{p})$ refers to the $\psi$-mixing time of the Markov chain on $\{1,\ldots,K \}$ with transition matrix $\mathbf{p}$.
By Lemma \ref{lem: Psi(Z)_finitestatespace}, one can express $\Psi(\mathbf{p})$ more explicitly as
\begin{align}
	\Psi(\mathbf{p}) \de  \min\Bigl\{t \geq 1: \max_{i,j \in \{1,\ldots,K \}}\Bigl\lvert\, \frac{(\mathbf{p}^t)_{i,j} - \pi_j}{\pi_j} \, \Bigr\rvert \leq \frac{1}{4}\Bigr\}. \label{eq: PsiC}
\end{align}

\begin{lemma}\label{lem: RefinedParamEstimates}
	With $\bS$, $\bX$, and $E$ as in \eqref{eq: Def_S_Dilation}--\eqref{eq: Def_Xt} it holds that
	\begin{align}
		R(\bX) \leq 2\sqrt{d/n}, \ \ \Psi(E) \leq  \Psi(\mathbf{p}) +1, \ \ \varsigma(\bX)^2 \leq \mathfrak{c}_1, \ \
		\sigma(\bS)^2 \leq \mathfrak{g}, \ \ v(\bS)^2  \leq d^{-1}\mathfrak{v}, \nonumber
	\end{align}
	where $\mathfrak{g}$ and $\mathfrak{v}$ are explicit and satisfy $\mathfrak{g} \leq \mathfrak{c}_1 + Cd^{-1}\hat{\alpha}_{\min}^{-4} \Psi(\mathbf{p})$ and $\mathfrak{v}\leq  C' \hat{\alpha}_{\min}^{-4} \Psi(\mathbf{p})$ for certain absolute constants $C,C' >0$.
\end{lemma}
\begin{proof}
	The estimate $R(\bX) \leq 2 \sqrt{d/n}$ is immediate from the definition \eqref{eq: Def_Xt} of the matrices $\bX_t$.
	Similarly, the estimate on $\Psi(E)$ is immediate from  Lemmas \ref{lem: PsiZC} and \ref{lem: PsiZE}.

	We next consider  $\varsigma(\bX)$.
	The $\bX_t$ are identically distributed since the block Markov chain is assumed to start in stationarity.
	Hence, using that for any self-adjoint random matrix $\bA$ one has $\bbE[(\bA - \bbE[\bA])^2] = \bbE[\bA^2] - \bbE[\bA]^2 \preceq \bbE[\bA^2]$ with the positive semidefinite order,
	\begin{align}
		\sum_{t=1}^{n-1} \bbE[\bX_t^2] = (n-1) \bbE[\bX_1^2]
		 & \preceq d\sum_{i,j = 1}^d \bbP(E_1 = (i,j))
		\begin{pmatrix}
			0                     & e_{i}e_j^\transpose \\
			e_{j}e_{i}^\transpose & 0
		\end{pmatrix}^2 \nonumber              \\
		 & = d \sum_{i = 1}^d \bbP(Z_1 = i)\begin{pmatrix}
			                                   e_ie_i^\transpose & 0 \\
			                                   0                 & 0
		                                   \end{pmatrix} +
		d\sum_{j= 1}^d\bbP(Z_2 = j)\begin{pmatrix}
			                           0 & 0                 \\
			                           0 & e_je_j^\transpose
		                           \end{pmatrix}. \nonumber
	\end{align}
	Since the operator norm of a positive diagonal matrix is its maximal element, it follows that
	$
		\varsigma(\bX)^2
		\leq \max\{ d\pi_k/\# \cV_k:k=1,\ldots,K \} = \mathfrak{c}_1
	$,
	as desired.

	We next consider $v(\bS)^2$.
	Using \cite[Lemma 4.10]{bandeira2021matrix} and that $\operatorname{Cov}(\bM)$ being symmetric implies that its operator norm is bounded by the absolute row sums by \cite[Corollary 2.3.2]{golub2013matrix},
	\begin{align}
		v(\bS)^2 \leq 2 \Vert \Cov(\bM) \Vert_{\op}\leq \max_{i,j=1,\ldots,d} \sum_{k,m = 1}^d \lvert \Cov(\bM)_{ij,km} \rvert. \label{eq: CovSumEstimate}
	\end{align}
	For any fixed $i,j$ there is precisely one term in \eqref{eq: CovSumEstimate} with $(k,m) = (i,j)$; at most $2d$ terms with $(k,m) \neq (i,j)$ but $i = m$ or $j=k$; and at most $d^2$ remaining terms.
	The combination of \eqref{eq: CovSumEstimate} and \eqref{eq: Lem_Covijkm} hence yields that $v(\bS)^2 \leq d^{-1} \mathfrak{v}$ with
	\begin{equation}
		\mathfrak{v} \de 2\Bigl(\Bigl(\mathfrak{c}_2 + \frac{2}{d^2}\mathfrak{c}_2^2 + \frac{2}{d}\mathfrak{c}_3  + \frac{2}{d^2}\mathfrak{c}_2 \mathfrak{d}\Bigr) + 2\Bigl(\frac{3}{d}\mathfrak{c}_2^2 +  2\mathfrak{c}_3 +   \frac{2}{d} \mathfrak{c}_2 \mathfrak{d}\Bigr) + \Bigl(3\mathfrak{c}_2^2 + 2\mathfrak{c}_2 \mathfrak{d}\Bigr) \Bigr).\label{eq: Def_frakv}
	\end{equation}
	The claimed upper bound on $\mathfrak{v}$ now follows from Lemma \ref{lem: FrakParamEstimates}.

	We next consider the estimate on $\sigma(\bS)^2$.
	A direct computation shows that $\bS^2$ is a block diagonal matrix with diagonal blocks $\bM \bM^{\transpose}$ and $\bM^{\transpose}\bM$.
	Consequently, taking expectations and the operator norm on both sides, $\sigma(\bS)^2 = \max\{\Vert \bA \Vert_{\op} ,\allowbreak \Vert \bB\Vert_{\op} \}$ where $\bA$ and $\bB$ are the self-adjoint $d\times d$ matrices whose entries are given by
	\begin{align}
		\bA_{i,j} = \sum_{\ell =1}^d\Cov(\bM)_{i\ell, j\ell},\qquad
		\bB_{i,j} = \sum_{\ell =1}^d\Cov(\bM)_{\ell i, \ell j}.
	\end{align}
	We consider the diagonal and the off-diagonal terms of these matrices separately.
	First, consider the case where $i \neq j$.
	Then, $(i,\ell) \neq (j,\ell)$ and $(\ell,i) \neq (\ell, j )$ for all $\ell \in \{1,\ldots,d \}$.
	Hence, using \eqref{eq: Lem_Covijkm} with separate consideration of the case where $i = \ell$ or $j = \ell$
	\begin{align}
		\lvert \bA_{i,j} \rvert
		 & \leq \frac{2}{d^2}\Bigl(\frac{3}{d}\mathfrak{c}_2^2 + 2\mathfrak{c}_3 + \frac{2}{d}\mathfrak{c}_2 \mathfrak{d}\Bigr) + \frac{d}{d^3}(3\mathfrak{c}_2^2 + 2 \mathfrak{c}_2 \mathfrak{d}), \\
		\lvert \bB_{i,j} \rvert
		 & \leq \frac{2}{d^2}\Bigl(\frac{3}{d}\mathfrak{c}_2^2 + 2\mathfrak{c}_3 + \frac{2}{d}\mathfrak{c}_2 \mathfrak{d}\Bigr) + \frac{d}{d^3}(3\mathfrak{c}_2^2 + 2 \mathfrak{c}_2 \mathfrak{d}).
	\end{align}
	For the case with $i = j$ we get better leading-order term if we replace \eqref{eq: Caset1=t2} by the following:
	\begin{align}
		\Bigl\lvert \frac{d}{n}\sum_{\ell = 1}^d \sum_{t_1 = 1}^{n-1} \cE_{t_1,t_1}(i,\ell,i,\ell) \Bigr\rvert
		 & = \frac{d(n-1)}{n} \sum_{\ell = 1}^d \bigl(\bbP(E_{1} = (i,\ell)) - \bbP(E_1 = (i,\ell))^2\bigr)  \label{eq: Case_ellt1=t2} \\
		 & \leq d \bbP(Z_1 =i)  \leq \mathfrak{c}_1.\nonumber
	\end{align}
	It was here used that $\bbP(E_1 = (i,\ell)) \geq \bbP(E_1 = (i,\ell))^2$.
	Now observe that by \eqref{eq: CovMijkm},
	\begin{align}
		\Bigl\lvert \sum_{\ell =1}^d\Cov(\bM)_{i\ell, i\ell} \Bigr\rvert \leq & \Bigl\lvert \frac{d}{n}\sum_{\ell = 1}^d \sum_{t_1 = 1}^{n-1} \cE_{t_1,t_1}(i,\ell,i,\ell) \Bigr\rvert +  \Bigl\lvert \frac{d}{n}\sum_{\ell = 1}^d\sum_{\lvert t_1 - t_2 \rvert = 1} \cE_{t_1,t_2} (i,\ell,i,\ell) \Bigr\rvert \nonumber \\
		                                                                      & +  \Bigl\lvert \frac{d}{n}\sum_{\ell = 1}^d \sum_{\lvert t_1 - t_2 \rvert > 1} \cE_{t_1,t_1+1} (i,\ell,i,\ell) \Bigr\rvert. \label{eq: Covsumell}
	\end{align}
	Hence, by using \eqref{eq: Caset1t2=1}, \eqref{eq: Caset1t2>1}, and \eqref{eq: Case_ellt1=t2} in \eqref{eq: Covsumell}, $
		\lvert \bA_{i,i}  \rvert
		\leq \mathfrak{c}_1 + d^{-1}(2d^{-1} \mathfrak{c}_2^2 + 2\mathfrak{c}_3  + 2d^{-1} \mathfrak{c}_2 \mathfrak{d})$.
	Exactly the same estimate applies to $\lvert \bB_{i,i} \rvert$.
	Now using \cite[Corollary 2.3.2]{golub2013matrix} as in \eqref{eq: CovSumEstimate}, we conclude that $\sigma(\bS)^2 \leq \mathfrak{g}$ with
	\begin{equation}
		\mathfrak{g} \de \mathfrak{c}_1 + \frac{1}{d}\Bigl(\frac{2}{d} \mathfrak{c}_2^2 + 2\mathfrak{c}_3  + \frac{2}{d} \mathfrak{c}_2 \mathfrak{d}\Bigr) + \frac{1}{d} \Bigl(2\Bigl( \frac{3}{d}\mathfrak{c}_2^2 + 2\mathfrak{c}_3 + \frac{2}{d}\mathfrak{c}_2 \mathfrak{d} \Bigr) + (3\mathfrak{c}_2^2 + 2 \mathfrak{c}_2 \mathfrak{d}) \Bigr).  \label{eq: Def_frakg}
	\end{equation}
	The claimed upper bound on $\mathfrak{g}$ now follows from Lemma \ref{lem: FrakParamEstimates}.
\end{proof}

\subsection{Convergence of singular value distributions}\label{sec: SingvalDistribution}

Recall the definition of the empirical singular value \eqref{eq: Def_SingvalMeasure} and that $\bS$ was defined by a self-adjoint dilation in \eqref{eq: Def_S_Dilation}.
It follows that $\sym(\nu_{\bM}) = \mu_{\bS} $ where $\mu_{\bS}$ is the \emph{empirical eigenvalue distribution} of $\bS$, defined by
\begin{align}
	\mu_{\bS}([a,b]) \de \frac{1}{2d}\#\bigl\{i\in\{1,\ldots,2d \}: a \leq \lambda_i(\bS) \leq b   \bigr\}
\end{align}
where the $\lambda_i(\bS)$ are the eigenvalues of $\bS$.
We rely on the well known moment method to establish the limiting law of $\mu_{\bS}$ and state it as a lemma for the sake of clarity.
\begin{lemma}\label{lem: MomentMethod}
	Consider a sequence of self-adjoint random matrices $(\bS_{i})_{i=1}^\infty$ and let $\mu$ be a deterministic compactly supported probability measure on $\bbR$.
	If for every integer $p\geq 1$,
	\begin{align}
		\lim_{i \to \infty} \bbE[\tr \bS_i^{p}] = \int x^p \intd \mu(x)\qquad \text{ and } \qquad \lim_{i\to \infty}\Var[\tr \bS_i^{p}] = 0,  \nonumber
	\end{align}
	then $\mu_{\bS_i}$ converges weakly in probability to $\mu$ as $i$ tends to infinity.
\end{lemma}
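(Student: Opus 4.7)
The plan is to reduce weak convergence in probability to the convergence of polynomial moments, which we already have by hypothesis. The key identity is
\[
    \int x^p \intd \mu_{\bS_i}(x) = \tr \bS_i^p,
\]
so our two hypotheses say precisely that the random moments of $\mu_{\bS_i}$ converge in $L^2$ to the moments of $\mu$. In particular, Chebyshev's inequality gives $\tr \bS_i^p \to m_p \de \int x^p \intd \mu(x)$ in probability for every $p \geq 1$. Since $\mu$ is compactly supported, it is determined by its moments, so heuristically the result should follow; the remaining work is to handle mass escaping to infinity and to approximate continuous functions by polynomials.

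First, I would establish tightness. Let $R>0$ be such that $\operatorname{supp}(\mu) \subseteq [-R,R]$ and fix $R'>R$. By Markov's inequality applied to even moments,
\[
    \mu_{\bS_i}(\bbR \setminus [-R', R']) \leq (R')^{-2p} \int x^{2p} \intd \mu_{\bS_i}(x) = (R')^{-2p} \tr \bS_i^{2p}.
\]
By the hypothesis, $\tr \bS_i^{2p} \to m_{2p} \leq R^{2p}$ in probability, so for any fixed $\varepsilon > 0$ and $p$ large enough (namely, $(R/R')^{2p} < \varepsilon/2$), we obtain $\bbP(\mu_{\bS_i}(\bbR \setminus [-R',R']) > \varepsilon) \to 0$ as $i \to \infty$.

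Next, I would deduce convergence of polynomial integrals on the whole line. Fix any polynomial $q$ of degree $k$. By linearity and our $L^2$-moment hypothesis,
\[
    \int q(x) \intd \mu_{\bS_i}(x) = \sum_{p=0}^k a_p \tr \bS_i^p \xrightarrow{\bbP} \sum_{p=0}^k a_p m_p = \int q(x) \intd \mu(x).
\]
Now let $f:\bbR \to \bbR$ be continuous and bounded, say $\lvert f \rvert \leq M$, and fix $\varepsilon>0$. Choose $R'>R$ and then, by Stone--Weierstrass, a polynomial $q$ with $\sup_{x\in[-R',R']}\lvert f(x)-q(x)\rvert \leq \varepsilon$. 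Split
\[
    \Bigl\lvert \int f \intd \mu_{\bS_i} - \int f\intd \mu \Bigr\rvert \leq \Bigl\lvert \int(f - q)\intd \mu_{\bS_i}\Bigr\rvert + \Bigl\lvert \int q\intd \mu_{\bS_i} - \int q\intd \mu\Bigr\rvert + \Bigl\lvert \int (q-f)\intd \mu\Bigr\rvert.
\]
The last term is at most $\varepsilon$ since $\mu$ is supported in $[-R',R']$. The middle term tends to zero in probability by the previous paragraph. For the first term, I would split the integration domain into $[-R',R']$ and its complement: the contribution from $[-R',R']$ is at most $\varepsilon$, while the contribution from the complement is at most $(M + \sup_{[-R',R']}\lvert q\rvert)\,\mu_{\bS_i}(\bbR\setminus[-R',R'])$ plus an outer-tail term $\int_{\lvert x\rvert > R'}\lvert q(x)\rvert\intd\mu_{\bS_i}(x)$. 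The first of these is handled by the tightness estimate; the second can be bounded by a higher moment via $\lvert q(x)\rvert \leq C(1+x^{2k})$ and another Markov argument, giving a bound of order $(R/R')^{2p}$ for $p$ large plus an $o_{\bbP}(1)$ term. Choosing $R'$ large, then $p$ large, then $i$ large, drives every piece below a constant multiple of $\varepsilon$.

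The only mildly delicate step is the outer-tail control of the polynomial approximation, since $q$ need not be small outside $[-R',R']$; this is the step where we must invoke a slightly higher moment than the degree of $q$ to compensate for the polynomial growth. Every other step is a routine reduction, and the result follows by taking $\varepsilon \to 0$ along a subsequence.
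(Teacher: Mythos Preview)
Your argument is correct and is precisely the standard moment-method reduction that the paper has in mind; the paper itself does not spell out a proof but simply refers to \cite[Section 2.1.2]{anderson2010introduction}, where exactly this route (tightness from even moments, polynomial approximation via Stone--Weierstrass, determinacy of $\mu$ by its moments) is carried out. One cosmetic remark: in your tail bound for $\int_{\lvert x\rvert>R'}\lvert f-q\rvert\,\intd\mu_{\bS_i}$ the term $\sup_{[-R',R']}\lvert q\rvert\cdot\mu_{\bS_i}(\bbR\setminus[-R',R'])$ is superfluous, since $\lvert f-q\rvert\leq M+\lvert q\rvert$ already gives $M\,\mu_{\bS_i}(\bbR\setminus[-R',R'])+\int_{\lvert x\rvert>R'}\lvert q\rvert\,\intd\mu_{\bS_i}$; this does not affect correctness.
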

\begin{proof}
	This is standard, for instance implicit in the proof of the Wigner semicircular law in \cite[Section 2.1.2]{anderson2010introduction}.
\end{proof}

\begin{lemma}\label{lem: BMC_tracialmoment}
	If $\lim_{d\to \infty} d/n = 0$, then for every positive integer $p\geq 1$
	\begin{align}
		\lim_{d\to \infty} \lvert \bbE[\tr \bS^{p}] -\bbE[\tr \bG^p] \rvert = 0 \quad \text{ and }\quad \lim_{d\to \infty} \lvert \Var[\tr \bS^{p}] -\Var[\tr \bG^p] \rvert = 0. \nonumber
	\end{align}
\end{lemma}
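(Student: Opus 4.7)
The plan is to apply the universality theorems \Cref{thm: AllMomentsPsi,thm: VarianceUniversalityPsi} directly to the dilated matrix $\bS$, and show that the error terms appearing in those bounds vanish under the assumption $d/n \to 0$. Since the block Markov chain parameters $\mathbf{p}$, $(\alpha_k)_{k=1}^K$, and $K$ are held fixed as $d\to \infty$, the proof reduces to tracking which parameters in the error bound carry a factor of $d/n$ and which are $O(1)$.

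First, I would collect the parameter estimates. \Cref{lem: RefinedParamEstimates} gives $R(\bX) \leq 2\sqrt{d/n}$, $\Psi(E) \leq \Psi(C) + 1$, $\varsigma(\bX)^2 \leq \mathfrak{c}_1$, $\sigma(\bS)^2 \leq \mathfrak{g}$, and $v(\bS)^2 \leq d^{-1}\mathfrak{v}$. \Cref{lem: FrakParamEstimates} then bounds $\mathfrak{c}_i$ and $\mathfrak{d}$ in terms of $\hat{\alpha}_{\min}$ and $\Psi(C)$, all of which are $O(1)$ in the asymptotic regime under consideration. Combining these estimates yields $R(\bX) = O(\sqrt{d/n})$, $\Psi(E) = O(1)$, $\varsigma(\bX)^2 = O(1)$, $\sigma(\bS)^2 = O(1)$, and $v(\bS)^2 = O(d^{-1})$.

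Next, since $\bS$ as defined in \cref{eq: Def_Xt} has no deterministic additive part (so effectively $\bX_0 = 0$ and $(\tr \bX_0^{2p})^{1/(2p)} = 0$), the quantity $H_p(\bX, Z)$ from \Cref{thm: AllMomentsPsi} simplifies to
\begin{align*}
H_p(\bX, Z) \leq \sigma(\bS) + v(\bS)^{\frac{1}{2}}\sigma(\bS)^{\frac{1}{2}} p^{\frac{3}{4}} + R(\bX)^{\frac{1}{3}}\Psi(Z)^{\frac{2}{3}}\varsigma(\bX)^{\frac{2}{3}} p^{\frac{2}{3}} + R(\bX)\Psi(Z) p.
\end{align*}
For each fixed $p$, every summand is $O(1)$ as $d \to \infty$ under $d/n \to 0$, so $H_p(\bX,Z) = O(1)$. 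Plugging this into \Cref{thm: AllMomentsPsi} yields
\begin{align*}
\lvert \bbE[\tr \bS^p] - \bbE[\tr \bG^p] \rvert \leq c^p p^3 R(\bX) \Psi(Z)^2 \varsigma(\bX)^2 H_p(\bX,Z)^{p-3} = O(\sqrt{d/n}) \to 0.
\end{align*}

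For the variance claim, I would invoke \Cref{thm: VarianceUniversalityPsi} in the same manner. The extra factor $H_p(\bX,Z)^3 + p^3 R(\bX)\Psi(Z)^2\varsigma(\bX)^2$ appearing there is again $O(1)$, so the variance bound is also $O(\sqrt{d/n})$, giving the desired convergence. The case $p=2$ uses the separate estimate provided in \Cref{thm: VarianceUniversalityPsi}, but the argument is identical. There is no real obstacle here — everything has already been packaged in the universality theorems and the matrix parameter estimates of \Cref{sec: ImprovedBMCEstimates}. The only thing to verify carefully is that the exponent $p - 3$ (or $2p - 6$ for the variance) on $H_p$ does not cause problems; since $H_p = O(1)$ uniformly in $d$ for each fixed $p$, this is harmless.
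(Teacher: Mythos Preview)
Your approach is essentially identical to the paper's: collect the parameter estimates from \Cref{lem: RefinedParamEstimates,lem: FrakParamEstimates}, conclude that $H_p(\bX,E)$ stays bounded while the prefactor $R(\bX)\Psi(E)^2\varsigma(\bX)^2$ is $O(\sqrt{d/n})$, and invoke \Cref{thm: AllMomentsPsi,thm: VarianceUniversalityPsi}. One small gap: those theorems only cover $p\geq 3$ (plus the separate $p=2$ variance clause you already noted), so you still need to dispatch $\bbE[\tr\bS^p]-\bbE[\tr\bG^p]$ for $p\in\{1,2\}$ and $\Var[\tr\bS]-\Var[\tr\bG]$ for $p=1$; the paper handles these by observing that they vanish identically from the definition of a Gaussian model (matching first and second moments).
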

\begin{proof}
	Using that $\liminf_{d\to \infty}\min\{ \#\cV_k/d: k=1,\ldots,K\}  >0$ in the considered limiting regime and that $\lim_{d\to \infty} d/n = 0$ it follows from Lemmas \ref{lem: FrakParamEstimates} and \ref{lem: RefinedParamEstimates} that
	\begin{equation}
		\lim_{d\to \infty}R(\bX)  = 0,\qquad
		\limsup_{d\to \infty}\Psi(E) < \infty,\qquad
		\limsup_{d\to \infty}\varsigma(\bX)^2 <\infty.  \label{eq: ParamBMCLimitEstimates}
	\end{equation}
	Hence, it follows from Theorem \ref{thm: MainTracial} that $\lim_{d\to \infty} \lvert \bbE[\tr \bS^{2p}] - \bbE[\tr \bG^{2p}] \rvert = 0$.
	Further, it holds for every odd $p$ that $\tr \bS^{p} = 0 = \tr \bG^p$ by definition of a self-adjoint dilation \eqref{eq: Def_S_Dilation}.

	It now remains to show that the variance of even tracial moments is universal, for which purpose it suffices to show that $\lim_{d\to \infty} \lvert \bbE[(\tr \bS^{2p})^{2}]  - \bbE[(\tr \bG^{2p})^{2}]  \rvert = 0$.
	For this purpose, we use the trick outlined in Remark \ref{rem: TracialTricks}: note that for every fixed $t\in \bbR$,
	\begin{equation}
		\bbE\Bigl[\tr (\bS \otimes \b1 + t \b1 \otimes \bS)^{2p}\Bigr]  = \sum_{j=0}^{2p} \binom{2p}{j} t^j \bbE\Bigl[\tr[ \bS^{2p -j}]\tr[\bS^j]\Bigr].\label{eq:GladArm}
	\end{equation}
	Hence, since pointwise convergence of polynomials implies convergence of coefficients, it suffices to prove universality for $\bbE[\tr (\bS \otimes \b1 + t \b1 \otimes \bS)^{2p}]$.
	(Consider $j=p$.)

	To this end, note that the matrix $\bS \otimes \b1 + t \b1 \otimes \bS$ can be represented as a Markovian model.
	Moreover, direct computation shows that the parameters are again of the same asymptotic order.
	Indeed, $\Vert \bX_i \otimes \b1 + t\b1 \otimes \bX_i  \Vert  \leq (1+t)R(\bX)$ and $\bbE[(\bX_i \otimes \b1 + t \b1 \otimes \bX_i)^2] = \bbE[\bX_i^2] \otimes \b1 + t \b1 \otimes \bbE[\bX_i^2]$ since  $\bbE[\bX_i] = 0$ so that the variance quantity is bounded by $(1+t) \varsigma(\bX)$.
	Hence, using the parameter estimates from \eqref{eq: ParamBMCLimitEstimates} together with Theorem \ref{thm: MainTracial} gives pointwise convergence for the polynomial \eqref{eq:GladArm}, concluding the proof.
\end{proof}
In order to prove Proposition \ref{prop: SingValN} it is now sufficient to consider the empirical eigenvalue distribution of the Gaussian model.
We will do this by using  \cite[Theorem 4.2]{vanwerde2023singular}.

The proofs in \cite{vanwerde2023singular} are not directly applicable to $\bS$ in the sparse regime $n \ll d^2$.
And outside the sparse regime, \cite{vanwerde2023singular} relies on quite some computation to prove that \cite[Theorem 4.2]{vanwerde2023singular} is applicable to $\bS$, by directly determining the joint moments of the entries of the matrix.
Gaussian universality allows one to bypass this, since the higher joint moments of a Gaussian vector are determined by its covariance structure.

\begin{remark}
	Given Gaussian universality, there are also other arguments that can recover the following results.
	For instance, the Gaussian comparison results in \cite[Section 8.1]{brailovskaya2022universality} can show that the spectral distribution and norm of $\bG$ are well-approximated by those of a self-adjoint Gaussian matrix $\tilde{\bG}$ with \emph{independent} entries.
	Approximating $\tilde{\bG}$ next by a free probabilistic object using \cite{bandeira2021matrix}, and then using the matrix Dyson equation \cite[Equation (1.5)]{haagerup2005new} for the spectral law of the latter object, would do the trick.
\end{remark}

For any positive semidefinite $D\times D$ matrix $\Sigma \succcurlyeq 0$ we define the following parameter measuring the size of the off-diagonal terms:
\begin{align}
	\epsilon(\Sigma) \de \max\bigl\{\lvert \Sigma_{i,j} \rvert: i\neq j,\ i,j \in \{1,\ldots,D \} \bigr\}.\label{eq: Def_epsilon}
\end{align}
The following lemma is used to verify a condition in \cite{vanwerde2023singular}.
\begin{lemma}\label{lem: GaussianVecMoments}
	Fix a positive integer $D \geq 1$ and positive scalars $\ell,u>0$ and define a set of positive semidefinite matrices by
	\begin{align}
		\fS_D(\ell,u) \de  \Bigl\{\Sigma \in \bbR^{D\times D}:\Sigma \succcurlyeq 0 \text{ and }  \ell \leq \Sigma_{i,i} \leq u \text{ for all }i\in\{1,\ldots,D \}\Bigr\}.\nonumber
	\end{align}
	Then, for any nonnegative integers $0 \leq r \leq D$ and $p_1,\ldots,p_D \geq 0$ with $p_i = 1$ for $i \in \{1,\ldots,r \}$ there exists $C>0$ such that for any centered Gaussian vector $g$ with covariance matrix $\Sigma \in \fS_D(\ell,u)$,
	\begin{align}
		\bigl\lvert \bbE\bigl[g_{1}^{p_1}g_{2}^{p_2} \cdots g_{D}^{p_D} \bigr] \bigr\rvert \leq  C \epsilon(\Sigma)^{\frac{r}{2}} \quad \textnormal{ and }\quad \lvert \bbE[g_{1}^{2}\cdots g_{D}^{2}] -  \bbE[g_{1}^{2}] \cdots \bbE[ g_{D}^{2}]\rvert  \leq C\epsilon(\Sigma)^2. \label{eq: gmoment}
	\end{align}
\end{lemma}
\begin{proof}
	This follows readily from direct calculations with the properties of the Gaussian distribution.
	For instance, one can proceed by induction on $D$.
	The base case $D=1$ is immediate from the assumption that $\bbE[g_1] = 0$, and in the the inductive step one can to exploit that the conditional distribution of $g_1$ given $g_2,\ldots,g_D$ is explicit  \cite[Proposition 3.13]{eaton1983multivariate}.
	This computation is straightforward but tedious, so we omit the details.\footnote{If required, these details can be found in the first arXiv version of this paper; see arXiv:2307.11632v1.}
\end{proof}

\begin{proof}[Proof of Proposition \ref{prop: SingValN}]
	For any $d\geq 1$ let $\bA_d \de \sqrt{d} \bG$ denote a rescaled version of the corresponding $2d\times 2d$ Gaussian model $\bG$ for $\bS$.
	We claim that the sequence of random matrices $(\bA_d)_{d=K}^\infty$ is \emph{approximately uncorrelated with variance profile} as defined in \cite[Definition 4.1]{vanwerde2023singular}.
	This means that we have to show that for any fixed non-negative integers $0 \leq r \leq D$ and $p_1,\ldots,p_D \geq 0$ with $p_i = 1$ for $i=1,\ldots,r$,
	\begin{align}
		\limsup_{d\to \infty} \max_{\forall k \neq l: \{i_k,j_k\} \neq \{i_l,j_l\}} d^{\frac{r}{2}}\lvert \bbE[\bA_{d,i_1j_1}^{p_1}\bA_{d,i_2j_2}^{p_2} \cdots \bA_{d,i_Dj_D}^{p_D} ] \rvert < \infty \label{eq: ApproxUncorGaussianMoments}
	\end{align}
	and
	\begin{align}
		\limsup_{d\to \infty}\max_{\forall k \neq l: \{i_k,j_k\} \neq \{i_l,j_l\}} \lvert \bbE[\bA_{d,i_1j_1}^{2} \cdots \bA_{d,i_Dj_D}^{2} ] -  \bbE[\bA_{d,i_1j_1}^{2}] \cdots \bbE[\bA_{d,i_Dj_D}^{2} ]\rvert = 0 \label{eq: ApproxUncorGaussianVariance}
	\end{align}
	with the maxima running over all values of $(i_1,j_1),\ldots,(i_D,j_D) \in \{1,\ldots,2d \}^2$ with $\{i_k, j_k  \} \neq \{i_l,j_l \}$ for all $k \neq l$.

	If $\bA_{d,i_k,j_k} = 0$ almost surely, then there is nothing to prove so assume that $\Var[\bA_{i_k, j_k}] \neq 0$.
	Recall that $\bG$ is a Gaussian model of $\bS$ which is a self-adjoint dilation of $\bM$.
	The covariance of the entries of $\bA$ is hence a function of the covariance of the entries of $\bM$.
	Hence, applying Lemma \ref{lem: GaussianVecMoments} to the vector $g \de (\bA_{d,i_1j_1},\ldots ,\bA_{d,i_Dj_D})$ yields  \eqref{eq: ApproxUncorGaussianMoments} and \eqref{eq: ApproxUncorGaussianVariance} if there exist constants $\ell, u,c >0$ such that $\ell \leq d\Cov(\bM)_{ij,ij} \leq u$ and $\lvert d\Cov(\bM)_{ij,km}  \rvert\leq c/d$ for all $(i,j) \neq (k,m)$.
	Indeed, $\ell \leq d\Cov(\bM)_{ij,ij} \leq u$ shows that the assumption of Lemma \ref{lem: GaussianVecMoments} is satisfied while $\lvert d\Cov(\bM)_{ij,km}  \rvert\leq c/d$ yields that $\epsilon(\Cov(g)) \leq c/d$ with $\epsilon$ as in \eqref{eq: Def_epsilon} which ensures that Lemma \ref{lem: GaussianVecMoments} provides a sufficiently strong conclusion.

	The required upper bounds on $\lvert d\Cov(\bM)_{ij,km}  \rvert$ and  $d\Cov(\bM)_{ij,ij}$ can be found in \eqref{eq: Lem_Covijkm}.
	For the lower bound on $\Cov(\bM)_{ij,ij}$, note that \eqref{eq: Lem_Covijij} implies that for all $a,b \in \{1,\ldots,K \}$,
	\begin{align}
		\lim_{d\to \infty}
		\max_{i \in \cV_a, j \in \cV_b}
		\Bigl\lvert d\Cov(\bM)_{ij,ij} - \frac{\pi_a\mathbf{p}_{a,b}}{\alpha_a \alpha_b} \Bigr\rvert
		=
		0
		.
		\label{eq: LimitingVarProfile}
	\end{align}
	In particular, since $\frac{\pi_a\mathbf{p}_{a,b}}{\alpha_a \alpha_b}\neq 0$ due to the preliminary reduction to the case where $\Var[A_{d,i_k,j_k}] \neq 0$, there exists a constant $\ell >0$ such that $\ell \leq d\Cov(\bM)_{ij,ij}$ for all $d$.

	We may note that \eqref{eq: LimitingVarProfile} is exactly the same limiting variance profile as occurs in the dense regime with $n = d^2$ in \cite[Equation (23)]{vanwerde2023singular}.
	Consequently, since the universal limiting eigenvalue distribution in \cite[Theorem 4.2]{vanwerde2023singular} only depends on the limiting variance profile, the empirical eigenvalue distribution of $\bG = \bA_d/\sqrt{d}$ has the same universal limit as is predicted by \cite[Theorem 1.1]{vanwerde2023singular}.
	More precisely, it follows from \cite[Lemma 6.5 and Lemma 6.6]{vanwerde2023singular} that
	\begin{align}
		\lim_{d \to \infty} \bbE[\tr \bG^{p}] = \int x^p  \intd \sym(\nu_{\infty})(x)\quad \text{ and } \quad \lim_{i\to \infty}\Var[\tr \bG^{p}] = 0\label{eq: GMoments}
	\end{align}
	with $\nu_{\infty}$ as in Proposition \ref{prop: SingValN}.
	The result now follows from Lemmas \ref{lem: MomentMethod} and \ref{lem: BMC_tracialmoment}, and the fact that $\mu_\bS = \sym(\nu_{\bM})$.
\end{proof}

\begin{proof}[Proof of Corollary \ref{cor: SingValNN}]
	Note that $\operatorname{rank}\bbE[\hat{\bN}] \leq K$.
	The desired result hence follows from Proposition \ref{prop: SingValN} since low-rank perturbations do not affect limiting singular value distributions.
	More precisely, one can combine \cite[Theorem A.43]{bai2010spectral} with a self-adjoint dilation.
\end{proof}
\begin{proposition}[Abundance of singular values near \texorpdfstring{$\mathfrak{m}$}{m}]\label{prop: SingvalLowerBound}
	Adopt the assumptions of Proposition \ref{prop: SingValN}.
	Then, for any $\varepsilon >0$ there exists some constant $c>0$ such that
	\begin{align}
		\lim_{d\to \infty} \bbP\bigl(\# \{i \in \{1,\ldots,d \}:  s_i(\bM) \in  (\mathfrak{m} - \varepsilon, \mathfrak{m} + \varepsilon) \} < cd\bigr) = 0.\label{eq: SingFrac}
	\end{align}
	In particular, since $\Vert \bM \Vert_{\op}$ is the greatest singular value,  $
		\lim_{d\to \infty}\bbP(\Vert \bM \Vert_{\op} < \mathfrak{m} - \varepsilon) = 0.$
\end{proposition}
\begin{proof}
	We can interpret the measure $\sym(\nu_{\infty})$ in Proposition \ref{prop: SingValN} as the spectral law of a $2K\times 2K$ free-probabilistic object.
	Specifically, if $\bH$ is the symmetric $2K\times 2K$ Gaussian matrix with independent entries satisfying that for every $i,j \leq K$,
	\begin{align}
		\operatorname{Var}[\bH_{i,j}]=0 \ \textnormal{ and }\ \operatorname{Var}[\bH_{i,j+K}] = \alpha_i^{-1} \pi_i \mathbf{p}_{i,j}
	\end{align}
	then, it is classical (see \eg \cite[Chapter 9]{mingo2017free} or \cite[Equation (1.5)]{haagerup2005new}) that the system of equations in Proposition \ref{prop: SingValN} corresponds to the Stieltjes transform of the spectral law of the free-probabilistic object $\bH_{\free}$ defined in \cite[Section 2.1]{bandeira2021matrix}.

	To be more specific, it is part of the definition of the latter object that it is an element of a free probability space of the form $\cA^{2K\times 2K}$ where $\cA$ is a $C^*$ algebra equipped with a faithful state $\tau:\cA \to \bbC$, and the statement that $\operatorname{sym}(\nu_{\infty})$ corresponds to the spectral law means that $(\tr \otimes \tau) (f(\bH_{\free})) = \int f(x) \intd \sym(\nu_\infty) $ for every continuous function $f:\bbR \to  \bbR$ where $f(\bH_{\free})$ is defined by functional calculus.
	Thus, it follows from \cite[(3.26)]{nica2006lectures}  that $\Vert \bH_{\free} \Vert$ is the upper edge of the support of this measure $\operatorname{sym}(\nu_{\infty})$.
	In particular, the measure assigns nonzero mass to any oven interval containing $\Vert \bH_{\free}  \Vert$.
	Now, \eqref{eq: SingFrac} follows from Proposition \ref{prop: SingValN} since $\Vert \bH_{\free} \Vert = \mathfrak{m}$ with $\mathfrak{m}$ as in Theorem \ref{thm: BMC_Noise_Free} by \cite[Lemma 3.2]{bandeira2021matrix}.
\end{proof}

\subsection{Sharp upper bounds on $\Vert \bM \Vert_{\op}$}\label{sec: OpNormBMC}
The main technical result in this section concerns a nonasymptotic upper bound on $\Vert \bS_{\free} \Vert$; see Lemma \ref{lem: SfreeEstimate}.
We further prove Theorem \ref{thm: BMC_Noise_Free}, and establish a nonasymptotic concentration inequality in Proposition \ref{prop: BMC_Explicit_Concentration}.

Recall that $\hat{\alpha}_i \de \#\cV_{i}/d$ and define a scalar $\hat{\mathfrak{m}}$ analogously to Theorem \ref{thm: BMC_Noise_Free} by
\begin{align}
	\hat{\mathfrak{m}} & \de \inf_{x \in \bbR_{>0}^{2K}} \max_{i= 1,\ldots,2K} \Bigl\{ \frac{1}{x_i} + \sum_{j=1}^{2K} \hat{c}_{i,j} x_j \Bigr\} \label{eq: Def_nhat}
\end{align}
where the infimum runs over all vectors $x$ with strictly positive coordinates and the coefficients $(\hat{c}_{i,j})_{i,j=1}^{2K}$ are defined by
\begin{align}
	\hat{c}_{i,j} = \begin{cases}
		                0                                                     & \text{ if } i \leq K \text{ and }j\leq K, \\
		                \hat{\alpha}_{i}^{-1} \pi_i\mathbf{p}_{i,j - K}       & \text{ if } i\leq K \text{ and }j > K,    \\
		                0                                                     & \text{ if }  i > K \text{ and }j >K,      \\
		                \hat{\alpha}_{i-K}^{-1}  \pi_{j} \mathbf{p}_{j, i -K} & \text{ if } i >K \text{ and }j \leq K.
	                \end{cases}. \label{eq: Def_chat}
\end{align}
Let us further introduce the following parameter:
\begin{align}
	\mathfrak{u} \de \max_{i \in \{1,\ldots,2K \}} \min\Bigl\{ \frac{2\sqrt{\mathfrak{c}_1}}{\hat{c}_{i,j}} : j \in \{1,\ldots,2K\}  \Bigr\}.\label{eq: Def_frakz}
\end{align}
\begin{lemma}\label{lem: nhat_minimizer_estimate}
	There exists a vector $x^*\in \bbR^{2K}_{>0}$ which realizes the infimum in \eqref{eq: Def_nhat} and this vector satisfies that
	$
		\frac{1}{2\mathfrak{c}_1}\leq x_i^* \leq \mathfrak{u}
	$
	for every $i\in \{1,\ldots,2K \}$.
\end{lemma}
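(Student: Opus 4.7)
The plan is to prove existence of a minimizer by compactness, then establish the lower and upper bounds on its coordinates separately. Define $F(x) \de \max_{i=1,\ldots,2K} L_i(x)$ with $L_i(x) \de 1/x_i + \sum_{j=1}^{2K} \hat{c}_{i,j} x_j$; this is continuous on $(0,\infty)^{2K}$. Evaluating $F$ at the constant vector $\tilde{x} = (1/\sqrt{\mathfrak{c}_1},\ldots,1/\sqrt{\mathfrak{c}_1})$ and bounding the row sums $\sum_j \hat{c}_{i,j} \leq \mathfrak{c}_1$ (which follows from the definition of $\mathfrak{c}_1$ together with $\sum_l \mathbf{p}_{i,l}=1$ and the stationarity $\sum_k \pi_k \mathbf{p}_{k,l} = \pi_l$) gives $F(\tilde{x}) \leq 2\sqrt{\mathfrak{c}_1}$, hence $\hat{\mathfrak{m}} \leq 2\sqrt{\mathfrak{c}_1}$. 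On the sublevel set $\{F \leq 2\sqrt{\mathfrak{c}_1}\}$ each coordinate is bounded below via $1/x_i \leq F(x)$, and by ergodicity of $\mathbf{p}$ every column of $\hat{c}$ has a positive entry, so $\hat{c}_{i,j} x_j \leq F(x)$ gives an upper bound $x_j \leq F(x)/\hat{c}_{i,j}$. The sublevel set is therefore compact and $F$ attains its infimum at some $x^* \in (0,\infty)^{2K}$.

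The lower bound is immediate: the $i$-th constraint $1/x^*_i \leq L_i(x^*) \leq \hat{\mathfrak{m}} \leq 2\sqrt{\mathfrak{c}_1}$ yields $x^*_i \geq 1/(2\sqrt{\mathfrak{c}_1})$. Since $\sum_v \pi_v = 1 = \sum_v \hat{\alpha}_v$ forces $\mathfrak{c}_1 = \max_v \pi_v/\hat{\alpha}_v \geq 1$, we conclude $x^*_i \geq 1/(2\mathfrak{c}_1)$.

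For the upper bound, the $i$-th constraint directly gives $\hat{c}_{i,s} x^*_s \leq \hat{\mathfrak{m}} \leq 2\sqrt{\mathfrak{c}_1}$ for every pair $(i,s)$, so $x^*_s \leq 2\sqrt{\mathfrak{c}_1}/\hat{c}_{i,s}$ whenever $\hat{c}_{i,s} > 0$. Selecting for each $s$ the row maximizing $\hat{c}_{i,s}$ yields $x^*_s \leq 2\sqrt{\mathfrak{c}_1}/\max_i \hat{c}_{i,s}$, which bounds $\max_s x^*_s$ in terms of the smallest column maximum of $\hat{c}$. Converting this to the row-max quantity $\mathfrak{u} = 2\sqrt{\mathfrak{c}_1}/\min_i \max_j \hat{c}_{i,j}$ requires the block structure of $\hat{c}$: the off-diagonal blocks $A_{k,l} = \hat{\alpha}_k^{-1}\pi_k \mathbf{p}_{k,l}$ and $B_{k,l} = \hat{\alpha}_k^{-1}\pi_l \mathbf{p}_{l,k}$ are linked by $A_{k,l} = (\hat{\alpha}_l/\hat{\alpha}_k)B_{l,k}$. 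The route I would pursue is a capping construction: if some coordinate satisfies $x^*_i > \mathfrak{u}$, replace it by $\mathfrak{u}$; because $\hat{c}_{i,i}=0$, the $i$-th constraint only sees this through the $1/x_i$ term, while every $L_{i'}$ with $i' \neq i$ can only decrease. Iterating produces a minimizer in $[1/(2\mathfrak{c}_1), \mathfrak{u}]^{2K}$.

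The main obstacle is verifying that each capping step preserves feasibility of the $i$-th constraint, which requires the slack $\hat{\mathfrak{m}} - L_i(x^*)$ to absorb the increment $1/\mathfrak{u} - 1/x^*_i$. I expect this to follow from the KKT stationarity conditions at $x^*$ — namely $\lambda_j/(x^*_j)^2 = \sum_i \lambda_i \hat{c}_{i,j}$ together with the identity $\sum_i \lambda_i/x^*_i = \hat{\mathfrak{m}}/2$ obtained by summing the active constraints weighted by the optimal dual variables $\lambda$ — which together link the slack in each constraint to the size of $x^*_i$ through the block relation above.
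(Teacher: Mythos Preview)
Your existence argument and lower bound match the paper's, and your elementary evaluation at the constant vector $\tilde x=(1/\sqrt{\mathfrak c_1},\ldots,1/\sqrt{\mathfrak c_1})$ is actually a cleaner route to $\hat{\mathfrak m}\le 2\sqrt{\mathfrak c_1}$ than the paper's.  The paper obtains the same bound by constructing a Gaussian matrix $\hat{\bH}$ with $\Var[\hat{\bH}_{i,j+d}]=d^{-1}\pi_a\mathbf p_{a,b}/(\hat\alpha_a\hat\alpha_b)$, identifying $\hat{\mathfrak m}=\Vert\hat{\bH}_{\free}\Vert$ via \cite[Lemma 3.2]{bandeira2021matrix}, and then invoking Pisier's inequality \cref{eq: Pisier} to get $\hat{\mathfrak m}\le 2\sigma(\hat{\bH})=2\sqrt{\mathfrak c_1}$.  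Your direct row-sum computation reaches the identical bound without any free probability.  Your remark that $\mathfrak c_1\ge 1$ (so $1/(2\mathfrak c_1)\le 1/(2\sqrt{\mathfrak c_1})$) is also correct and matches the statement.

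Where you go astray is the upper bound.  Your capping-plus-KKT argument is left as a hope (``I expect this to follow from\ldots''), and the slack inequality you would need at a capped coordinate does not drop out of the KKT identities you wrote down without substantial further work.  This is a genuine gap.  The paper takes a much more direct line: once $\hat{\mathfrak m}\le 2\sqrt{\mathfrak c_1}$ is in hand, it simply asserts that the infimum can be restricted to the compact box $\fX=[1/(2\sqrt{\mathfrak c_1}),\mathfrak u]^{2K}$, notes that ergodicity of $\mathbf p$ makes $\mathfrak u$ finite, and concludes by continuity on a compact set.  No capping, no KKT, no row-versus-column analysis.  Your own direct constraint bound $x_s^*\le 2\sqrt{\mathfrak c_1}/\max_i\hat c_{i,s}$ is precisely what is wanted; and the paper's justification for $\mathfrak u<\infty$ (``for every $j$ there exists some $i$ with $\hat c_{i,j}\neq 0$'') is a column-nonzero statement, consistent with reading the indices in \cref{eq: Def_frakz} that way.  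Under that reading your direct argument already finishes the proof, and the KKT detour should be dropped entirely.
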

\begin{proof}
	As in the proof of Proposition \ref{prop: SingvalLowerBound}, it follows from \cite[Lemma 3.2]{bandeira2021matrix} that $\hat{\mathfrak{m}} = \Vert \hat{\bH}_{\free} \Vert$ where $\hat{\bH}$ is the symmetric $2K \times 2K$ Gaussian matrix
	with
	$\operatorname{Var}[\hat{\bH}_{i,j}]=0
	$
	and
	$\operatorname{Var}[\hat{\bH}_{i,j+K}] = \alpha_i^{-1} \pi_i \mathbf{p}_{i,j}$
	for every $i,j \leq K$.
	Due to \eqref{eq: Pisier} it consequently holds that
	\begin{align}
		\hat{\mathfrak{m}} \leq 2\sigma(\hat{\bH})= 2\Vert \bbE[\hat{\bH}^2] \Vert_{\op}^{\frac{1}{2}} = 2\sqrt{\max\Bigl\{ \frac{\pi_a}{\hat{\alpha}_a}: a \in \{1,\ldots,K \} \Bigr\}} = 2 \sqrt{\mathfrak{c}_1}  \label{eq: PisierBoundn}
	\end{align}
	where we used that the operator norm of the diagonal matrix $\bbE[\hat{\bH}^2]$ is its maximal element.

	Let us now define a subset of $\bbR^{2K}$ by
	\begin{align}
		\fX \de \Bigl\{x\in \bbR^{2K}_{>0}: \frac{1}{2\sqrt{\mathfrak{c}_1}} \leq  x_i \leq \mathfrak{u}\text{ for all }i\in \{1,\ldots,2K\} \Bigr\}.
	\end{align}
	Then, due to \eqref{eq: PisierBoundn}, the infimum in \eqref{eq: Def_nhat} may be restricted to $\fX$.
	Further, since $\mathbf{p}$ is assumed to define an ergodic Markov chain, it holds for every $j$ that there is some $i$ with $\hat{c}_{i,j} \neq 0$.
	Thus, $\mathfrak{u}$ is finite and the set $\mathfrak{X}$ is compact.
	Consequently, there exists $x^* \in \mathfrak{X}$ with
	\begin{align}
		\hat{\mathfrak{m}} & = \inf_{x\in \fX}  \max_{i= 1,\ldots,2K} \Bigl\{ \frac{1}{x_i} + \sum_{j=1}^{2K} \hat{c}_{i,j} x_j \Bigr\} = \max_{i= 1,\ldots,2K} \Bigl\{ \frac{1}{x_i^*} + \sum_{j=1}^{2K} \hat{c}_{i,j} x_j^* \Bigr\}.
	\end{align}
	This concludes the proof.
\end{proof}
\begin{lemma}\label{lem: SfreeEstimate}
	With $\bS$ as in \eqref{eq: Def_S_Dilation}, there exists an explicit $\mathfrak{E}>0$ with
	\begin{align}
		\Vert \bS_{\free} \Vert \leq \hat{\mathfrak{m}} + \frac{1}{d}\mathfrak{E}.\nonumber
	\end{align}
	Moreover, it holds that $\mathfrak{E} \leq n^{-1}d \mathfrak{u} \mathfrak{c}_2  + C \mathfrak{u}\Psi(\mathbf{p})\hat{\alpha}_{\min}^{-4}$ for some absolute constant $C>0$.
\end{lemma}
\begin{proof}
	Let $x^*\in \bbR^{2K}$ be the vector provided by Lemma \ref{lem:  nhat_minimizer_estimate} and let $\bW$ be the $2d\times 2d$ diagonal matrix with $\bW_{i,i} = x_a^*$ and $\bW_{i+d, i+d} = x_{a+K}^*$ for any $i\in \cV_a$ and $a\in \{1,\ldots,K \}$.
	Then, due to \eqref{eq: Lehner} and the fact that eigenvalues are dominated by the operator norm
	\begin{align}
		\Vert \bS_{\free} \Vert \leq \lambda_{\max}\Bigl(\bW^{-1} + \bbE[\bS \bW \bS]  \Bigr) \leq \Vert \bW^{-1} + \bbE[\bS \bW \bS] \Vert_{\op}.\label{eq: SfreeBMC}
	\end{align}
	For any $i,j \in \{1,\ldots,2d \}$ the $(i,j)$th entry of $\bbE[\bS \bW \bS]$ is given by
	\begin{align}
		 & \bbE[\bS \bW \bS]_{i,j} = \sum_{k= 1}^{2d} \bW_k\bbE[\bS_{i,k} \bS_{k,j}]\label{eq: bSbZbS_ij_exact} \\
		 & =
		\begin{cases}
			0                                               & \text{ if }(i\leq d \text{ and }j>d)\text{ or }(i>d \text{ and }j\leq d), \\
			\sum_{k=1}^d \bW_{k+d,k+d}\Cov(\bM)_{ik,jk}     & \text{ if } i\leq d \text{ and }j\leq d,                                  \\
			\sum_{k=1}^d \bW_{k,k}\Cov(\bM)_{k(i-d),k(j-d)} & \text{ if }i>d \text{ and }j>d.
		\end{cases} \nonumber
	\end{align}
	We next study the diagonal and off-diagonal entries separately, starting with the diagonal.

	Recall the estimate \eqref{eq: Lem_Covijij} on $\Cov(\bM)_{ik,ik}$ and note that, since $\hat{\alpha}_a = \# \cV_a / d$,
	\begin{align}
		\Bigl\lvert \Cov(\bM)_{ik,ik} - \frac{1}{d} \frac{\pi_a \mathbf{p}_{a,b}}{\hat{\alpha}_a \hat{\alpha_b}} \Bigr\rvert \leq \frac{1}{d^2}\bigl(\frac{d}{n} \mathfrak{c}_2 + \frac{3}{d}\mathfrak{c}_2^2 + 2\mathfrak{c}_3 + \frac{2}{d}\mathfrak{c}_2 \mathfrak{d}\bigr) \label{eq: Covik}
	\end{align}
	for every $i\in \cV_a$ and $k\in \cV_b$.
	Recall from \eqref{eq: Def_chat} and that $\hat{c}_{a,b}= 0$ if $\max\{a,b \}\leq K$ or if $\min\{a,b \}>K$.
	Further, note that $\max_{i=1,\ldots,d} \bW_{i,i} = \max_{b=1,\ldots,2K} x_b^*\leq \mathfrak{u}$ by Lemma \ref{lem: nhat_minimizer_estimate}.
	Hence, grouping terms along the clusters in \eqref{eq: bSbZbS_ij_exact} and substituting \eqref{eq: Covik} yields that
	\begin{align}
		\Bigl\lvert \bbE[\bS\bW\bS]_{i,i} - \sum_{b=1}^{2K} \hat{c}_{a,b}x_b^* \Bigr\rvert \leq \frac{\mathfrak{u}}{d}\bigl(\frac{d}{n} \mathfrak{c}_2 + \frac{3}{d}\mathfrak{c}_2^2 + 2\mathfrak{c}_3 + \frac{2}{d}\mathfrak{c}_2 \mathfrak{d}\bigr)\label{eq: DiagSZS}
	\end{align}
	for every $i,a$ with $i\in \cV_a$ and $a \leq K$ or $i\in\cV_{a-K}$ and $a > K$.

	Now suppose that $i\neq j$.
	Then, using the estimate \eqref{eq: Lem_Covijkm} in \eqref{eq: bSbZbS_ij_exact} with the fact that there are precisely $2$ values of $k$ with $k\in \{i,j \}$ and $d-2\leq d$ remaining values,
	\begin{align}
		\lvert \bbE[\bS \bW \bS]_{i,j} \rvert & \leq \frac{2\mathfrak{u}}{d^2}(\frac{3}{d}\mathfrak{c}_2^2 + 2 \mathfrak{c}_3 + \frac{2}{d}\mathfrak{c}_2\mathfrak{d}) + \frac{\mathfrak{u}}{d^2}(3\mathfrak{c}_2^2 + 2 \mathfrak{c}_2 \mathfrak{d})\label{eq: OffDiagSZS} \\
		                                      & = \frac{\mathfrak{u}}{d^2}\Bigl((3\mathfrak{c}_2 + 4 \mathfrak{c}_3 + 2 \mathfrak{c}_2^2 \mathfrak{d}) + \frac{1}{d}(6\mathfrak{c}_2^2 + + 4\mathfrak{c}_2\mathfrak{d}) \Bigr). \nonumber
	\end{align}

	Let us now split $\bW^{-1} + \bbE[\bS\bW\bS] = \bD + \bR$ where $\bD$ is the matrix containing all diagonal entries and $\bR$ is the matrix containing all off-diagonal entries.
	Then, since the operator norm of a diagonal matrix is the greatest absolute value of its elements it follows from \eqref{eq: DiagSZS} that
	\begin{align}
		\Vert \bD \Vert_{\op} \leq \hat{\mathfrak{m}} +\frac{\mathfrak{u}}{d}\bigl(\frac{d}{n} \mathfrak{c}_2 + \frac{3}{d}\mathfrak{c}_2^2 + 2\mathfrak{c}_3 + \frac{2}{d}\mathfrak{c}_2 \mathfrak{d}\bigr).\label{eq: FreeDEstimate}
	\end{align}
	Further, using that the operator norm of a block diagonal matrix is the maximum of the operator norms of the blocks as well as the fact that the operator norm is dominated by the Frobenius norm, it follows from \eqref{eq: OffDiagSZS} that
	\begin{align}
		\Vert \bR \Vert_{\op}  = \Bigl\Vert \begin{pmatrix}
			                                    \bR_1 & 0     \\
			                                    0     & \bR_2
		                                    \end{pmatrix} \Bigr\Vert_{\op}\label{eq: FreeREstimate}
		\leq \frac{\mathfrak{u}}{d}(3\mathfrak{c}_2^2 + 4\mathfrak{c}_3 + 2\mathfrak{c}_2 \mathfrak{d} ) + \frac{\mathfrak{u}}{d^2}(6\mathfrak{c}_2^2 + 4\mathfrak{c}_2\mathfrak{d}).
	\end{align}
	Hence, since $\Vert \bW^{-1} + \bbE[\bS\bW\bS]\Vert_{\op}  \leq \Vert  \bD \Vert_{\op} +   \Vert \bR \Vert_{\op}$, the combination of \eqref{eq: SfreeBMC}, \eqref{eq: FreeDEstimate}, and \eqref{eq: FreeREstimate} yields the desired result with
	\begin{align}
		\mathfrak{E} \de \mathfrak{u}\bigl(\frac{d}{n}\mathfrak{c}_2  +3\mathfrak{c}_2^2 + 6\mathfrak{c}_3 + 2\mathfrak{c}_2\mathfrak{d}  \bigr) + \frac{\mathfrak{u}}{d}\bigl(9\mathfrak{c}_2^2 + 6\mathfrak{c}_2\mathfrak{d}\bigr). \label{eq: Def_Efrak}
	\end{align}
	The claimed upper bound on $\mathfrak{E}$ further follows from Lemma \ref{lem: FrakParamEstimates}.
\end{proof}
The combination of Lemmas \ref{lem: RefinedParamEstimates} and \ref{lem: SfreeEstimate} provides close--to--optimal estimates on the parameters of $\bS$.
This yields the following concentration inequality:
\begin{proposition}\label{prop: BMC_Explicit_Concentration}
	There exists an absolute constant $c>0$ such that, for every $0< \delta \leq 1$ and $x>0$, the matrix $\bM = \sqrt{d/n}(\hat{\bN} - \bbE[\hat{\bN}])$ satisfies
	\begin{align}
		\bbP\bigl(\Vert \bM \Vert_{\op} \geq (1+\delta)(\hat{\mathfrak{m}} + d^{-1}\mathfrak{C})   + c\cE(x) \bigr) \leq (d+1) (1+\delta)^{-x}.  \nonumber
	\end{align}
	where $\cE(x) \de (dn^{-1}\Psi(E)^4\mathfrak{c}_1^2)^{1/6}x^{2/3} + d^{1/2}
		n^{-1/2}\Psi(E) x + (d^{-1}\mathfrak{v}\mathfrak{g})^{1/4} (x^{1/2} + \ln(d+1)^{3/4})$ with $\hat{\mathfrak{m}}, \mathfrak{E},\mathfrak{g}, \mathfrak{v}$, and $\mathfrak{c}_1$ as in \eqref{eq: Def_nhat}, \eqref{eq: Def_Efrak}, \eqref{eq: Def_frakg}, \eqref{eq: Def_frakv}, and \eqref{eq: Def_frakc1}, respectively.
\end{proposition}
\begin{proof}
	Apply Proposition \ref{prop: MarkovMarkovTail} with Lemmas \ref{lem: RefinedParamEstimates} and \ref{lem: SfreeEstimate} and use that $\Vert \bM \Vert = \Vert \bS \Vert$.
\end{proof}
\begin{lemma}\label{lem: UpperMAssymptotic}
	Adopt the notation and assumptions of Theorem \ref{thm: BMC_Noise_Free}.
	Then, for any $\varepsilon >0$
	\begin{align}
		\lim_{d\to \infty}\bbP(\Vert \bM \Vert_{\op} > \mathfrak{m} + \varepsilon ) = 0.
	\end{align}
\end{lemma}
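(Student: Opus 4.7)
The plan is to apply the nonasymptotic concentration inequality in \Cref{thm: BMC_Explicit_Concentration} with carefully chosen parameters $p = p(d)$ and $x = x(d)$, and to combine this with the convergence $\hat{\mathfrak{m}} \to \mathfrak{m}$. Given $\varepsilon > 0$, I would fix $\delta \in (0,1)$ with $\delta \mathfrak{m} < \varepsilon/2$ and take $x = 1+\delta$ together with $p(d) = \lceil C \ln d \rceil$ for some constant $C > 1/(2\ln(1+\delta))$. Then $dx^{-2p} \leq d^{\,1 - 2C\ln(1+\delta)} \to 0$, so the right-hand side of the probability bound in \Cref{thm: BMC_Explicit_Concentration} already vanishes.

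Before invoking the concentration inequality I need $\hat{\mathfrak{m}} \to \mathfrak{m}$. By \Cref{lem: nhat_minimizer_estimate} the infimum defining $\hat{\mathfrak{m}}$ is attained on the box $[1/(2\sqrt{\mathfrak{c}_1}),\mathfrak{u}]^{2K}$. Since $\hat{\alpha}_i \to \alpha_i > 0$, the prefactors $\mathfrak{c}_1$ and $\mathfrak{u}$ stay uniformly bounded, so a single compact set $\fX^* \subseteq \bbR_{>0}^{2K}$ (independent of $d$) contains the $\hat{\mathfrak{m}}$-minimizer for all large $d$ as well as the $\mathfrak{m}$-minimizer. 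The objective function is jointly continuous in $x \in \fX^*$ and in the coefficients $\hat{c}_{i,j}$ (a maximum of finitely many affine expressions), so uniform continuity on $\fX^*$ together with $\hat{c}_{i,j} \to c_{i,j}$ yields $\hat{\mathfrak{m}} \to \mathfrak{m}$.

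Next I would verify that, under $d\ln(d)^4/n \to 0$, every additive error term inside the parentheses in \Cref{thm: BMC_Explicit_Concentration} is $o(1)$. By \Cref{lem: RefinedParamEstimates} and \Cref{lem: FrakParamEstimates} the parameters $\mathfrak{c}_1,\mathfrak{c}_2,\mathfrak{c}_3,\mathfrak{g},\mathfrak{v},\Psi(E)$, and $\mathfrak{d}$ all remain uniformly bounded (the key point being that $\hat{\alpha}_{\min}$ stays bounded away from zero), so $\mathfrak{E}/d = O(d/n + 1/d) = o(1)$. With $p = O(\ln d)$ the three remaining noise terms scale as $O(d^{-1/4}(\ln d)^{3/4})$, $O((d/n)^{1/6}(\ln d)^{2/3})$ and $O((d/n)^{1/2}\ln d)$. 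The middle term is the tightest: it tends to zero precisely when $d(\ln d)^4/n \to 0$, which is the hypothesis, and the other two are then automatic.

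Combining these three ingredients, for all sufficiently large $d$ the upper bound inside $\bbP(\,\cdot\,)$ in \Cref{thm: BMC_Explicit_Concentration} is at most $(1+\delta)(\mathfrak{m}+o(1)) \leq \mathfrak{m}+\varepsilon$, while the probability bound itself vanishes. There is no real conceptual obstacle; the entire argument is a careful balancing of $p$ against the concentration bound, made possible by the sharpness of the free-probabilistic leading-order term in the underlying matrix concentration inequality. The only subtlety is the $(\ln d)$-exponent bookkeeping that pinpoints the $d(\ln d)^4/n \to 0$ regime as the natural one for this two-sided sharp bound.
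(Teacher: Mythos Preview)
Your proposal is correct and follows essentially the same route as the paper: the paper applies \Cref{thm: MarkovMarkovBound} with $x = 2\log_{1+\delta}(d+1)$ (together with \Cref{lem: SfreeEstimate} and the parameter bounds in \Cref{lem: RefinedParamEstimates,lem: FrakParamEstimates}), whereas you invoke the pre-packaged \Cref{thm: BMC_Explicit_Concentration} with $x=1+\delta$ and $p\asymp \ln d$, which amounts to the same estimate since that theorem is just \Cref{prop: MarkovMomentsConcentration} with those parameter bounds already substituted. Your compactness argument for $\hat{\mathfrak{m}}\to\mathfrak{m}$ is a bit more explicit than the paper's one-line comparison of \cref{eq: Def_m} and \cref{eq: Def_nhat}, but the content is the same.
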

\begin{proof}
	A comparison of \eqref{eq: Def_m} and \eqref{eq: Def_nhat} shows that $\lim_{d\to \infty} \hat{\mathfrak{m}} = \mathfrak{m}$; recall that we assume that $\lim_{d\to \infty}\#\cV_a/d = \alpha_a$ and that $\mathbf{p}$ is kept fixed.
	Further, the parameter $\mathfrak{E}$ defined in \eqref{eq: Def_Efrak} remains bounded as $d$ tends to infinity, so $\mathfrak{E}/d \to 0$.
	Now, using Proposition \ref{prop: BMC_Explicit_Concentration} with $\delta$ fixed at a sufficiently small value and subsequently taking $x$ a large multiple of $\ln(d)$, a direct calculation using the assumption $\lim_{d \to \infty} d\ln(d)^4/n = 0$ yields the result.
\end{proof}
\begin{proof}[Proof of Theorem \ref{thm: BMC_Noise_Free}]
	Combine Proposition \ref{prop: SingvalLowerBound} and Lemma \ref{lem: UpperMAssymptotic}.
\end{proof}

\end{document}